\newtheorem{theorem}{Theorem}[section]
\newtheorem{lemma}[theorem]{Lemma}
\newtheorem{proposition}[theorem]{Proposition}
\newtheorem{corollary}[theorem]{Corollary}
\theoremstyle{definition}
\renewcommand{\leq}{\leqslant}
\renewcommand{\geq}{\geqslant}
\newcommand{\sln}{\mathsf{SL}_d(\mathbb R)}
\newcommand{\psln}{\mathsf{PSL_d(\mathbb R)}}
\newcommand{\grf}{\pi_1(S)}
\newcommand{\bgrf}{\partial_\infty\pi_1(S)}
\newcommand{\tr}{\operatorname{Tr}}
\newcommand{\TT}{\mathsf{T}}
\newcommand{\II}{{\bf I}}
\newcommand{\JJ}{{\bf J}}
\newcommand{\PP}{{\bf P}}
\newcommand{\p}{{\bf{p}}}
\newcommand{\C}{\mathbb C}
\newcommand{\Real}{\mathbb R}
\DeclareMathOperator{\tp}{t}
\DeclareMathOperator{\pb}{B_d(S)}
\newcommand{\eps}{\varepsilon}
\newcommand{\ms}{\mathsf}
\renewcommand{\hom}{\operatorname{Hom}}
\newcommand{\g}{\gamma}
\renewcommand{\H}{\mathbb H}
\renewcommand{\P}{\mathbb P}
\title[Pressure metrics]{An introduction to pressure metrics for higher Teichm\"uller spaces}
\author[Bridgeman]{Martin Bridgeman}
\address{Boston College, Chestnut Hill, MA 02467 USA}
\author[Canary]{Richard Canary}
\address{University of Michigan, Ann Arbor, MI 41809 USA}
\author[Sambarino]{Andr\'es Sambarino}
\address{Universit\'e Pierre et Marie Curie (Paris VI), 75005 Paris France}
\thanks{Bridgeman  was partially supported by NSF grant DMS - 1500545, Canary was partially supported by NSF grant DMS - 1306992 and  Sambarino was partially supported by
the European Research Council under the {\em European Community}'s
 seventh Framework Programme (FP7/2007-2013)/ERC {\em grant agreement} ${\rm n^o}$ FP7-246918.The authors  also acknowledge  support from U.S. National Science Foundation grants 
DMS 1107452, 1107263, 1107367 "RNMS: GEometric structures And Representation varieties" (the GEAR Network)}
\begin{document}
\maketitle

\tableofcontents

\section{Introduction}

We discuss how one uses the Thermodynamic Formalism to produce metrics on higher Teichm\"uller spaces. Our higher Teichm\"uller spaces will be 
spaces of Anosov representations of a word hyperbolic group into a semi-simple Lie group. To each such representation we associate an 
Anosov flow encoding eigenvalue information, and the Thermodynamic Formalism gives us a way to measure the difference between two such flows. 
This difference gives rise to an analytic semi-norm, which in many cases turns out to be a Riemannian metric, called the \emph{pressure metric}.  
This paper surveys results of  Bridgeman-Canary-Labourie-Sambarino \cite{BCLS} and discusses questions and open problems which arise. 

We begin by discussing our construction in the classical setting of the Teichm\"uller space of a closed orientable surface of genus at least 2. 
In this setting, our construction agrees with 
Thurston's Riemannian metric, as re-interpreted by Bonahon \cite{bonahon} using geodesic currents  and  McMullen \cite{mcmullen-pressure} using the Thermodynamic Formalism.
Wolpert \cite{wolpert-thurston}  showed that Thurston's metric is a multiple of the Weil-Petersson metric. The key difference between our
approach and McMullen's is that we work directly with the geodesic flow of the surface, rather than with Bowen-Series coding of the action
of the group on the limit set. Since such a coding is not known to exist for every hyperbolic group,
this approach will be crucial to generalizing  our results to the setting of all hyperbolic groups.

We next discuss the construction of the pressure metric in the simplest new situation: the Hitchin component of representations of a surface group into
$\psln$. This setting offers the cleanest results and also several simplifications of the general proof. 
Given a Hitchin representation, inspired by earlier work of Sambarino \cite{sambarino-quantitative},
we construct a metric Anosov flow, which we call the geodesic flow of the representation,
whose periods record the spectral radii of the elements in the image. 
We obtain a mapping class group invariant Riemannian metric on 
a Hitchin component whose restriction to the Fuchsian locus is a multiple of the Weil-Petersson metric. 

We hope that the discussion of the pressure metric in these two simpler settings will provide motivation and intuition for the general construction.
In section \ref{general} we discuss the more general settings studied in \cite{BCLS} with some comments on the additional difficulties which must be overcome.
We finish with a discussion  of open problems.

{\bf Acknowledgements:} The paper is based on a Master Class given by the authors at the Centre for Quantum Geometry of
Moduli Spaces in Aarhus.
We thank Jorgen Anderson for the invitation to give this Master Class and the editors for the invitation to write this
article. We thank Marc Burger, Fran\c cois Labourie and Adam Sikora for helpful conversations. Substantial portions of
this paper were written while the authors were in residence at the Mathematical Sciences Research Institute in Berkeley, CA,
during the Spring 2015 semester and were partially supported by NSF grant No. 0932078 000.

\section{The Thermodynamic Formalism}

The Thermodynamic Formalism was introduced by Bowen and Ruelle (\cite{bowen-book, BowenRuelle, ruelle}) as a tool to study the ergodic theory  of Anosov flows and diffeomorphisms. It was further developed by Parry and Pollicott, their monograph \cite{parry-pollicott} is a standard reference for the material covered here. McMullen \cite{mcmullen-pressure} introduced the pressure form as a tool for constructing metrics on spaces which may be mapped into H\"older potentials over a shift-space. We will give a quick summary of the basic facts we will need, but we encourage the reader to consult the original references and
the more complete discussion and references in \cite{BCLS}.

We recall that a smooth flow $\phi=(\phi_t:X\to X)_{t\in\mathbb R}$ on a compact Riemannian manifold is said to be 
{\em Anosov} if there is a flow-invariant splitting $TX = E^s\oplus E_0\oplus E^u$ where $E_0$ is a line
bundle parallel to the flow and if $t>0$, then $d\phi_t$ is exponentially contracting on $E_+$ and $d\phi_{-t}$ is exponentially contracting on $E_-.$ We will always assume that our Anosov flows are {\em topologically transitive} (i.e. have a dense orbit).
It is a celebrated theorem of Anosov (see \cite[Thm. 17.5.1]{katok-hasselblatt}) that the geodesic flow of a closed hyperbolic surface, and more generally of a
closed negatively curved manifold, is a  topologically transitive Anosov flow.

\subsection{Entropy, pressure and orbit-equivalence} Let $\phi$ be  a topologically transitive Anosov flow on a compact
Riemannian manifold $X$. If $a$ is a $\phi$-periodic orbit, denote by $\ell(a)$ its period and let $$R_T=\{a\textrm{ closed orbit}\mid \ell(a)\leq T\}.$$ Then, following Bowen \cite{bowen1}, we may define the {\em topological entropy} of $\phi$ to be the exponential growth rate of the number of periodic orbits whose periods are at most $T$, i.e. $$h(\phi)= \limsup_{T \rightarrow \infty} \frac{\log \#R_T}{T}.$$ 
Moreover, if $g:X\to\Real$ is H\"older and $a$ is a closed orbit, denote by $$\ell_g(a)=\int_0^{\ell(a)} g(\phi_s(x))\ ds,$$ where $x$ is any point on $a.$ Then, following Bowen-Ruelle \cite{BowenRuelle}, we may define the \emph{topological pressure} of $g$ (or simply \emph{pressure}) by $$\PP(g)=\PP(\phi,g)= \limsup_{T\to\infty}\frac1T\log\left(\sum_{a\in R_T}e^{\ell_g(a)}\right).$$ 

Note that $\PP(g)$ only depends on the periods of $g,$ i.e. the collection of numbers $\{\ell_g(a)\}.$ 

Liv\v sic provides a \emph{pointwise} relation for two functions having the same periods: two H\"older functions $f,g:X\to\mathbb R$ are {\em Liv\v sic cohomologous} if there exists a H\"older function $V:X\to \mathbb R,$ which is $C^1$ in the direction of the flow $\phi,$ such that \begin{equation}\label{cohomologous}f(x)-g(x)=\left.\frac{\partial}{\partial t}\right|_{t=0}V(\phi_t(x)).\end{equation}
Liv\v sic \cite{livsic} proved the following fundamental result:

\begin{theorem}[Liv\v sic \cite{livsic}] 
\label{livsic}
If $\phi$ is a topologically transitive Anosov flow and $g:X\to\Real$ is a H\"older function
such that $\ell_g(a)=0$ for every closed orbit a, then $g$ is Liv\v sic cohomologous to 0.
\end{theorem}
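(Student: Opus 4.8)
The plan is to build the transfer function $V$ explicitly along one dense orbit and then use the hypothesis $\ell_g(a)=0$ to force it to be H\"older, so that it extends to all of $X$. Since $\phi$ is topologically transitive, fix a point $x_0$ whose orbit is dense and set
$$V(\phi_T(x_0))=\int_0^T g(\phi_s(x_0))\,ds.$$
Then $\frac{d}{dT}V(\phi_T(x_0))=g(\phi_T(x_0))$, so the cohomology relation \eqref{cohomologous} already holds along the dense orbit; the whole difficulty is to promote $V$ to a H\"older function on $X$.

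First I would reduce the required H\"older estimate to a statement about orbit segments that almost close up. Suppose $\phi_{T_1}(x_0)$ and $\phi_{T_2}(x_0)$ satisfy $d(\phi_{T_1}(x_0),\phi_{T_2}(x_0))<\eps$ with $T_1<T_2$, and write $y=\phi_{T_1}(x_0)$, $\tau=T_2-T_1$, so that $d(\phi_\tau(y),y)<\eps$ and
$$V(\phi_{T_2}(x_0))-V(\phi_{T_1}(x_0))=\int_0^\tau g(\phi_s(y))\,ds.$$
The key tool is the Anosov closing lemma: there are uniform constants $C,\theta>0$ and, for $\eps$ small, a closed orbit $a$ with period $\ell(a)$ satisfying $|\ell(a)-\tau|\leq C\eps$ and a periodic point $p$ on $a$ that shadows the segment with exponentially small interior error, $d(\phi_s(y),\phi_s(p))\leq C\eps\,(e^{-\theta s}+e^{-\theta(\tau-s)})$ for $0\leq s\leq\tau$.

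The heart of the argument is then to compare the segment integral with $\ell_g(a)$, which vanishes by hypothesis. Since $g$ is $\alpha$-H\"older, the shadowing estimate gives $|g(\phi_s(y))-g(\phi_s(p))|\leq C'\eps^\alpha(e^{-\theta s}+e^{-\theta(\tau-s)})^\alpha$; because $(u+v)^\alpha\leq u^\alpha+v^\alpha$ for $0<\alpha\leq 1$, integrating over $[0,\tau]$ the two exponential tails converge and contribute $O(\eps^\alpha)$, while the period mismatch $|\ell(a)-\tau|\leq C\eps$ together with boundedness of $g$ contributes $O(\eps)$. Using $\ell_g(a)=0$ we obtain $|V(\phi_{T_2}(x_0))-V(\phi_{T_1}(x_0))|\leq C''\eps^\alpha$, which is exactly the assertion that $V$ is $\alpha$-H\"older along the dense orbit. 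It therefore extends uniquely to an $\alpha$-H\"older function $V:X\to\Real$.

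Finally I would check that the extension satisfies \eqref{cohomologous} everywhere. The identity $V(\phi_t(x))=V(x)+\int_0^t g(\phi_s(x))\,ds$ holds on the dense orbit and passes to all of $X$ by continuity of $V$ and $g$; differentiating in $t$ shows that $V$ is $C^1$ along the flow and that $\frac{\partial}{\partial t}\big|_{t=0}V(\phi_t(x))=g(x)$. I expect the main obstacle to lie in the closing lemma step, specifically in establishing the exponential (rather than merely uniform) shadowing and in verifying that the induced error, measured through the H\"older seminorm of $g$, sums to $O(\eps^\alpha)$; this is exactly the point at which hyperbolicity of the flow and the vanishing of all $\ell_g(a)$ are used together.
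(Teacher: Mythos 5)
The paper offers no proof of this statement---it is quoted directly from Liv\v sic's work---so there is nothing internal to compare against; your argument is the classical proof of the flow version of the theorem (define the transfer function by integrating $g$ along a dense orbit, use the Anosov closing lemma together with the vanishing of all periodic integrals $\ell_g(a)$ to obtain a uniform H\"older estimate on nearly closed orbit segments, then extend by density), and it is essentially correct. Two details deserve to be made explicit, though both are routine. First, in the flow setting the closing lemma returns a periodic orbit whose period $\ell(a)$ differs from $\tau$ and whose shadowing involves a time reparametrization, so the comparison should be with $\int_0^\tau g(\phi_{\sigma(s)}(p))\,ds$ for a reparametrization $\sigma$ that is $C\eps$-close to the identity; the resulting extra error is $O(\eps)$ since $g$ is bounded and H\"older, and is absorbed into your $O(\eps^\alpha)$ bound exactly as the period-mismatch term is. Second, the local H\"older estimate on the dense orbit (together with injectivity of $T\mapsto\phi_T(x_0)$, which holds unless $X$ is a single closed orbit, a trivial case) gives uniform continuity, hence a unique continuous extension to the compact space $X$; the H\"older bound and the cocycle identity $V(\phi_t(x))=V(x)+\int_0^t g(\phi_s(x))\,ds$ both pass to the closure, and differentiating the latter in $t$ yields the $C^1$ regularity along the flow required by the definition of Liv\v sic cohomology in Equation (\ref{cohomologous}). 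With these points spelled out the proof is complete, and it is the same argument found in the standard references cited by the paper.
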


Given a positive H\"older function $f:X\to (0,\infty)$ one may define a {\em reparametrization} of the
flow so that its ``speed'' at a point $x$ is multiplied by $f(x)$.
More formally, let $$\kappa_f(x,t) = \int_0^t f(\phi_s(x)) ds,$$ and define $\phi^f=(\phi_t^f:X\to X)_{t\in\mathbb R}$ so that 
$\phi^f_{\kappa_f(x,t)}(x) = \phi_t(x).$ In particular, if $a$ is a $\phi$-closed orbit then $a$ is also a closed orbit of the flow $\phi^f$ with period $\ell_f(a).$

A \emph{H\"older orbit equivalence} between two flows is a H\"older homeomorphism that sends orbits to orbits.
Moreover, if it preserves time, it is called a \emph{H\"older conjugacy}. In particular, the identity map is a H\"older
orbit equivalence from $\phi$ to $\phi^f$ when $f$ is a positive H\"older function.
(In our setting, the inverse of any
H\"older orbit equivalence is also a H\"older orbit equivalence, but we will not need this fact.)

Liv\v sic's theorem implies that two positive H\"older functions are Liv\v sic cohomologous if and only if the periods 
of $\phi^f$ and $\phi^g$ agree. If this is the case, the function $V$ in Equation (\ref{cohomologous})  provides a 
H\"older conjugacy between $\phi^f$ and $\phi^g.$ Moreover, one has the following standard consequence of 
Liv\v sic's Theorem (see Sambarino \cite[Lemma 2.11]{sambarino-orbital}).

\begin{lemma}\label{time-conjugacy} 
If $\phi$ is a topologically transitive, Anosov flow and there exists H\"older orbit equivalence to
a H\"older-continuous flow $\psi$,  
then there exists a H\"older function $f:X\to(0,\infty)$ such that $\psi$ is H\"older conjugate to $\phi^f.$
\end{lemma}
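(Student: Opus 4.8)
The plan is to extract from the orbit equivalence an additive time-change cocycle over $\phi$, produce the reparametrization function as the value of this cocycle at time $1$, and then use a cocycle version of Liv\v sic's theorem to correct the orbit equivalence into a genuine conjugacy. Write $h:X\to Y$ for the H\"older orbit equivalence onto the space $Y$ carrying $\psi$. Since $h$ is a homeomorphism sending $\phi$-orbits to $\psi$-orbits and $\phi$ has no fixed points, for each $x$ there is a unique continuous branch $\tau(x,\cdot):\mathbb R\to\mathbb R$ with $\tau(x,0)=0$ and
$$h(\phi_t(x))=\psi_{\tau(x,t)}(h(x)).$$
First I would record the structural properties of $\tau$: it is strictly increasing in $t$ (as $h$ preserves the orientation of orbits); it satisfies the additive cocycle identity $\tau(x,t+s)=\tau(x,s)+\tau(\phi_s x,t)$ (from the flow property of $\psi$ and injectivity along orbits); and it is jointly H\"older in $(x,t)$ (from the H\"older regularity of $h$ and $\psi$ together with the local structure of the flows). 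Verifying this last regularity statement is one of the technical points. A consequence of the cocycle identity and periodicity is that for every closed orbit $a$, with $x\in a$, the number $\tau(a):=\tau(x,\ell(a))$ is independent of the point $x\in a$ and equals the $\psi$-period of the closed $\psi$-orbit $h(a)$.

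Next I would simply set
$$f(x):=\tau(x,1).$$
This is the step where strict monotonicity pays off: since $\tau(x,1)>\tau(x,0)=0$ and $X$ is compact, $f$ is automatically a positive H\"older function, so no separate positivity argument is needed. A short computation using the cocycle relation together with $\phi_{\ell(a)}x=x$ shows that $\ell_f(a)=\int_0^{\ell(a)}\tau(\phi_s x,1)\,ds=\tau(a)$ for every closed orbit $a$; that is, the reparametrized flow $\phi^f$ has exactly the same periods as $\psi$.

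Finally I would upgrade this matching of periods to an honest conjugacy. The integrated cocycle $\int_0^t f(\phi_s x)\,ds=\kappa_f(x,t)$ and the cocycle $\tau(x,t)$ are two H\"older additive cocycles over $\phi$ with the same periods, so their difference is a H\"older additive cocycle all of whose periods vanish. The cocycle version of Liv\v sic's theorem---proved by the same argument as Theorem \ref{livsic}, building the transfer function along a dense orbit and controlling its increments with the Anosov closing lemma---then produces a H\"older function $u:X\to\mathbb R$ with
$$\tau(x,t)-\int_0^t f(\phi_s x)\,ds=u(\phi_t x)-u(x).$$
Setting $H(x):=\psi_{-u(x)}(h(x))$ gives a H\"older homeomorphism, and substituting this identity into the defining relation for $\tau$ shows $H\circ\phi^f_s=\psi_s\circ H$, so $H$ is the desired H\"older conjugacy between $\phi^f$ and $\psi$.

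I expect the main obstacle to be precisely this last Liv\v sic-rigidity step at the level of cocycles, together with the joint H\"older regularity of $\tau$ that underlies it. Everything else is soft: the cocycle is built directly from $h$, and the reparametrization function falls out as $\tau(\cdot,1)$ with its positivity handed to us by monotonicity. What genuinely requires the hyperbolicity of $\phi$ is converting the loose data of an orbit equivalence with matching periods into a time-preserving conjugacy, and this is exactly the content of the closing-lemma argument that forces a zero-period H\"older cocycle to be a H\"older coboundary.
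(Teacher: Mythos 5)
Your argument is correct and is essentially the standard proof that the paper delegates to Sambarino \cite[Lemma 2.11]{sambarino-orbital}: extract the additive time-change cocycle $\tau$ from the orbit equivalence, take $f(x)=\tau(x,1)$ (note this is literally what the paper's averaging construction $f=\left.\frac{\partial}{\partial t}\right|_{t=0}\int_0^1\kappa(\cdot,t+s)\,ds$ produces, since $\kappa(\cdot,0)=0$), match periods, and apply the cocycle form of Liv\v sic's theorem to upgrade to a time-preserving conjugacy. The two technical points you flag --- joint H\"older regularity of $\tau$ and the coboundary step --- are indeed where all the work lies, and they are handled exactly this way in the cited source.
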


The flow $\phi^f$ remains topologically transitive and  is again Anosov but in a metric sense. 
More specifically, $\phi^f$ is a Smale flow in
the sense of Pollicott \cite{pollicott-smale}. Pollicott shows that all the results we rely on in
the ensuing discussion generalize to the setting of Smale flows, which we will refer to as metric Anosov
flows.
Hence,  we may define the topological entropy of $\phi^f$ as
$$h(f)  = \limsup_{T \rightarrow \infty} \frac{\log \#R_T(f)}{T},$$ 
where $R_T(f)=\{a\textrm{ closed orbit}\mid \ell_f(a)\leq T\}.$ 
We recall the following standard lemma which relates pressure and entropy.

\begin{lemma}
\label{entropia2}
{\rm (see Sambarino \cite[Lemma 2.4]{sambarino-quantitative})}  
If $\phi$ is a topologically transitive Anosov flow and $f:X\to(0,\infty)$
is H\"older, then $\PP(-h f)=0$ if and only if $h=h(f)$. 
\end{lemma}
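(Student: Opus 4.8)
The plan is to locate the unique zero of the function $h\mapsto\PP(-hf)$ and identify it with $h(f)$, using the variational principle for pressure together with Abramov's formula. First I would record the elementary observation that
\[
\ell_{-hf}(a)=\int_0^{\ell(a)}(-hf)(\phi_s(x))\,ds=-h\,\ell_f(a),
\]
so that the periods of the potential $-hf$ are exactly $\{-h\,\ell_f(a)\}$, the very numbers feeding the definition of $\PP(-hf)$. Since $f$ is continuous and strictly positive on the compact manifold $X$, we have $\min_X f>0$, a bound I will use repeatedly to control $\int_X f\,dm$ from below.

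The main tool is the variational principle for the topological pressure of a topologically transitive Anosov flow, due to Bowen and Ruelle, which asserts that the periodic-orbit quantity defined above agrees with
\[
\PP(g)=\sup_{m}\Bigl(h_m(\phi)+\int_X g\,dm\Bigr),
\]
where $m$ ranges over the $\phi$-invariant Borel probability measures and $h_m(\phi)$ denotes the measure-theoretic entropy. Taking $g=-hf$ yields
\[
\PP(-hf)=\sup_{m}\Bigl(h_m(\phi)-h\int_X f\,dm\Bigr).
\]

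Writing $S=\sup_m h_m(\phi)/\int_X f\,dm$ and using $\int_X f\,dm\geq\min_X f>0$, I would then compare $h$ with $S$ by factoring each summand as $h_m(\phi)-h\int_X f\,dm=\bigl(h_m(\phi)/\int_X f\,dm-h\bigr)\int_X f\,dm$. If $h>S$ every factor in parentheses is at most $S-h<0$, so the supremum is bounded above by $(S-h)\min_X f<0$ and $\PP(-hf)<0$; if $h<S$ some measure makes the factor positive, so $\PP(-hf)>0$; and if $h=S$ every summand is nonpositive while a maximizing sequence drives the supremum to $0$. Hence $\PP(-hf)=0$ precisely when $h=S$. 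To finish I would invoke Abramov's formula: the reparametrization $m\mapsto m^f$, with $dm^f$ proportional to $f\,dm$, is a bijection from $\phi$-invariant to $\phi^f$-invariant probability measures satisfying $h_{m^f}(\phi^f)=h_m(\phi)/\int_X f\,dm$. Taking suprema and applying the variational principle once more, now to the flow $\phi^f$, gives $S=\sup_{m'}h_{m'}(\phi^f)=h(\phi^f)=h(f)$, which is the asserted equality.

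The arithmetic above is routine; the real content, and the main obstacle, is the legitimacy of the two imported facts. The variational principle and the existence of equilibrium states are classical for topologically transitive Anosov flows, but the reparametrized flow $\phi^f$ is only a \emph{metric} Anosov (Smale) flow, so I would need the versions of the variational principle and of Abramov's formula valid in that generality; this is precisely the part of Pollicott's theory for Smale flows that the surrounding text invokes. I would also take care that the maximizing sequence in the case $h=S$ keeps $\int_X f\,dm$ bounded, which it does since $f\leq\max_X f$, so that the relevant product indeed tends to $0$ and the equivalence encodes the exact equality $h=h(f)$ rather than a strict inequality.
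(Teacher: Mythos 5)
Your argument is correct and is essentially the standard proof of this lemma: the paper itself does not prove it but defers to Sambarino \cite[Lemma 2.4]{sambarino-quantitative}, whose argument is exactly the one you give, namely the Bowen--Ruelle variational principle $\PP(-hf)=\sup_m\bigl(h_m(\phi)-h\int_X f\,dm\bigr)$ combined with Abramov's formula for the reparametrized flow $\phi^f$ (valid here because $\phi^f$ is a Smale flow in Pollicott's sense, as you note). Your care about the bounds $0<\min_X f\leq\int_X f\,dm\leq\max_X f$, which make the trichotomy in $h$ versus $S$ work and keep the maximizing sequence under control, is precisely the point that makes the equivalence an exact equality $h=h(f)$.
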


Ruelle \cite[Cor. 7.10]{ruelle} (see also Parry-Pollicott \cite[Prop. 4.7]{parry-pollicott})
proved that the pressure function $\PP(g)$ is a real analytic function of the H\"older function $g$. 
It follows from Lemma \ref{entropia2}
and the Implicit Function Theorem that entropy varies analytically in $f$. 
Ruelle \cite{ruelle-hd} used a similar observation to show that the Hausdorff dimension of a quasifuchsian Kleinian group varies analytically on quasifuchsian space.

Ruelle \cite{ruelle} also showed that $\PP$ is a convex function and thus if $f:X\to\Real$ and $g:X\to\Real$  are H\"older functions,
 $$\left.\frac{\partial^2}{\partial t^2}\right|_{t=0} \PP(f+tg)\geq0.$$ 
Consider the space 
$$\mathcal P(X)=\{\Phi:X\to\Real\textrm{ H\"older}\mid\PP(\Phi)=0\}$$ of pressure zero H\"older functions on $X$.
It follows immediately from the definition above that 
the pressure function $\PP$ is constant on Liv\v sic cohomology classes, so it is  natural to consider the space 
$$\mathcal{H}(X)=\mathcal P(X)/\sim$$ 
of Liv\v sic cohomology classes of pressure zero functions.

McMullen \cite{mcmullen-pressure} defined a pressure semi-norm on the tangent space of the space of pressure zero
H\"older functions on a shift space.
Similarly, 
we define a {\em pressure semi-norm} on $\TT_f\mathcal{P}(X)$,  by letting
$$\|g\|_\PP^2=\left(\left.\frac{\partial^2}{\partial t^2}\right|_{t=0} \PP(f+tg)\right)
\left(\frac{-1}{\left.\frac{\partial}{\partial t}\right|_{t=0} \PP(f+tf)}\right)
$$
for all \hbox{$g\in \TT_f\mathcal P(X)=\ker D_f\PP$}.
(Formally, one should consider the space $\mathcal P^\alpha(X)$ of $\alpha$-H\"older pressure zero functions for some $\alpha>0$.
In all our applications, we will consider embeddings of analytic manifolds into $\mathcal P(X)$ such that  every point has a neighborhood which maps into
$\mathcal P^\alpha(X)$ for some $\alpha>0$. We will consistently suppress this technical detail.)
 
One obtains the following characterization of degenerate vectors, due to Ruelle and Parry-Pollicott.

\begin{theorem}{\rm (Ruelle \cite{ruelle}, see also Parry-Pollicott \cite[Prop. 4.12]{parry-pollicott})}
Let $\phi$ be a topologically transitive Anosov flow and consider $g\in \TT\mathcal P(X).$ Then, $\|g\|_{\PP}=0$ if and only if 
$g$ is Liv\v sic cohomologous to zero, i.e. $\ell_g(a)=0$ for every closed orbit $a.$
\end{theorem}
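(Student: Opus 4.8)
The plan is to identify $\|g\|_\PP^2$ with the \emph{variance} of $g$ relative to the equilibrium state of $f$, and then to play this analytic quantity against the combinatorial condition that all periods $\ell_g(a)$ vanish, the bridge between the two being Liv\v sic's Theorem \ref{livsic}. Throughout, fix the base point $f\in\mathcal P(X)$ with $g\in\TT_f\mathcal P(X)=\ker D_f\PP$, and let $\mu_f$ be the equilibrium state of $f$ furnished by the Thermodynamic Formalism. The first object I would record is the pair of derivative formulae underlying the semi-norm, valid by Ruelle's analyticity theorem: $D_f\PP(h)=\int_X h\,d\mu_f$ and
$$\left.\frac{\partial^2}{\partial t^2}\right|_{t=0}\PP(f+tg)=\operatorname{Var}(g,\mu_f):=\lim_{T\to\infty}\frac1T\int_X\Big(\int_0^T g(\phi_s x)\,ds-T\!\int_X g\,d\mu_f\Big)^2 d\mu_f(x)$$
(Ruelle \cite{ruelle}, Parry-Pollicott \cite[Prop. 4.12]{parry-pollicott}). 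Since $g\in\ker D_f\PP$ means exactly $\int_X g\,d\mu_f=0$, the centering term drops out and $\|g\|_\PP^2=\operatorname{Var}(g,\mu_f)$.

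For the implication ``cohomologous to zero $\Rightarrow\|g\|_\PP=0$'' I would avoid the variance entirely and use that $\PP$ is constant on Liv\v sic cohomology classes. If $g(x)=\left.\frac{\partial}{\partial t}\right|_{t=0}V(\phi_t x)$ for a H\"older $V$ that is $C^1$ along the flow, then $tg$ is Liv\v sic cohomologous to $0$ for every $t$ (scale the transfer function by $t$), so $f+tg$ is cohomologous to $f$ and hence $\PP(f+tg)=\PP(f)=0$ for all $t$; differentiating twice yields $\|g\|_\PP=0$. The same $V$ clarifies the two formulations of the right-hand side: telescoping gives $\ell_g(a)=\int_0^{\ell(a)}g(\phi_s x)\,ds=V(\phi_{\ell(a)}x)-V(x)=0$ on any closed orbit, while conversely $\ell_g(a)=0$ for all $a$ produces such a $V$ by Theorem \ref{livsic}. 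Thus the two formulations of the right-hand side are equivalent.

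The substantive implication is ``$\|g\|_\PP=0\Rightarrow g$ cohomologous to zero''. Here $\operatorname{Var}(g,\mu_f)=0$, and the key input I would quote from Ruelle \cite{ruelle} and Parry-Pollicott \cite[Prop. 4.12]{parry-pollicott} is that the asymptotic variance of a H\"older observable against an equilibrium measure vanishes if and only if that observable is Liv\v sic cohomologous to a constant $c$. Writing $g=c+\left.\frac{\partial}{\partial t}\right|_{t=0}V(\phi_t\cdot)$ and integrating against the $\phi$-invariant measure $\mu_f$ kills the coboundary term, since $\int_X\left.\frac{\partial}{\partial t}\right|_{t=0}V(\phi_t x)\,d\mu_f(x)=\left.\frac{\partial}{\partial t}\right|_{t=0}\int_X V(\phi_t x)\,d\mu_f(x)=0$ by $\phi$-invariance of $\mu_f$, leaving $c=\int_X g\,d\mu_f=0$. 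Hence $g$ is cohomologous to zero, completing the equivalence.

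The main obstacle is exactly the quoted fact that vanishing variance forces a genuine H\"older coboundary. Its proof runs through the Ruelle-Perron-Frobenius transfer operator: one expresses $\operatorname{Var}(g,\mu_f)$ as a convergent series of correlations and uses the spectral gap to solve the cohomological equation, then invokes a Liv\v sic-type regularity argument to guarantee that the solution $V$ is H\"older rather than merely $L^2$. As an alternative that stays closer to the tools in this excerpt, I would try to show directly that $\operatorname{Var}(g,\mu_f)=0$ degenerates the central limit theorem for the periods, forcing $\ell_g(a)=\ell(a)\int_X g\,d\mu_f=0$ for every closed orbit $a$, and then conclude from Theorem \ref{livsic}.
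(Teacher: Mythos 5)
Your argument is correct and is essentially the standard proof from the cited sources (Ruelle, Parry--Pollicott Prop.~4.12); the paper itself states this theorem without proof, as a quoted result. Identifying $\|g\|_{\PP}^2$ with the asymptotic variance of $g$ against the equilibrium state of $f$ (legitimate since $g\in\ker D_f\PP$ centers the observable), using constancy of $\PP$ on Liv\v sic cohomology classes for the easy direction, and invoking the variance-rigidity statement together with Theorem~\ref{livsic} for the converse, is exactly how the references establish it.
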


We make use of the following nearly immediate corollary of this characterization (see the proof of \cite[Lemma 9.3]{BCLS}).

\begin{corollary}
\label{degenerate vectors}
Let $\phi$ be a topologically transitive  Anosov flow.  Suppose
that $\{f_t\}_{t\in (-1,1)}:X\to(0,\infty)$ is
a smooth one parameter family of H\"older functions. Consider $\Phi:(-1,1)\to \mathcal P(X)$ defined by
 $\Phi(t)=-h(f_t)f_t$. Then $\|\dot{\Phi}_0\|_\PP=0$ if and only if $$\left.\frac{\partial}{\partial t}\right|_{t=0} h(f_t)\ell_{f_t}(a)=0$$ for every closed orbit $a$ of $\phi.$
\end{corollary}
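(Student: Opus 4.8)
The plan is to reduce the corollary to the preceding theorem of Ruelle and Parry–Pollicott, which characterizes the degenerate vectors of the pressure semi-norm as exactly the Livšic-cohomologous-to-zero directions. Since $\Phi(t) = -h(f_t)f_t$ is by construction a curve in $\mathcal P(X)$ (by Lemma \ref{entropia2}, $\PP(-h(f_t)f_t) = 0$ for each $t$), its velocity $\dot\Phi_0 = \left.\frac{\partial}{\partial t}\right|_{t=0}\big(-h(f_t)f_t\big)$ lies in $\TT_{\Phi(0)}\mathcal P(X) = \ker D_{\Phi(0)}\PP$, so the theorem applies directly. The theorem tells us that $\|\dot\Phi_0\|_\PP = 0$ if and only if $\ell_{\dot\Phi_0}(a) = 0$ for every closed orbit $a$. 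So the entire task is to identify $\ell_{\dot\Phi_0}(a)$ with the quantity $\left.\frac{\partial}{\partial t}\right|_{t=0} h(f_t)\ell_{f_t}(a)$ appearing in the statement.

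First I would make the key observation that the period functional $g \mapsto \ell_g(a)$ is \emph{linear} in $g$, being an integral $\ell_g(a) = \int_0^{\ell(a)} g(\phi_s(x))\,ds$ over the fixed orbit $a$ (the base orbit of $\phi$ and its length $\ell(a)$ do not move with $t$, since all $f_t$ are reparametrizations of the \emph{same} underlying flow $\phi$). Linearity lets me interchange differentiation in $t$ with the integral and with the period functional: for each fixed closed orbit $a$,
\[
\ell_{\dot\Phi_0}(a) = \int_0^{\ell(a)} \left.\frac{\partial}{\partial t}\right|_{t=0}\big(-h(f_t)f_t\big)(\phi_s(x))\,ds = \left.\frac{\partial}{\partial t}\right|_{t=0}\int_0^{\ell(a)} \big(-h(f_t)f_t\big)(\phi_s(x))\,ds = -\left.\frac{\partial}{\partial t}\right|_{t=0}\, h(f_t)\,\ell_{f_t}(a).
\]
The interchange is justified because $\{f_t\}$ is a smooth family and the domain of integration is a fixed compact interval, so dominated convergence (or uniform smoothness on the compact orbit) applies. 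Once this identity is in hand, the overall sign is irrelevant: $\ell_{\dot\Phi_0}(a) = 0$ for all $a$ if and only if $\left.\frac{\partial}{\partial t}\right|_{t=0} h(f_t)\ell_{f_t}(a) = 0$ for all $a$, and combining this equivalence with the theorem's equivalence gives the corollary.

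I expect the main technical point, rather than a genuine obstacle, to be the justification of differentiating under the integral sign and the confirmation that $\dot\Phi_0$ is H\"older and genuinely tangent to $\mathcal P(X)$ so that the theorem is applicable. Here one must use that entropy $h(f_t)$ varies analytically (indeed smoothly) in $t$, which follows from Lemma \ref{entropia2} together with the real-analyticity of $\PP$ and the Implicit Function Theorem as noted in the excerpt; this guarantees that $t \mapsto -h(f_t)f_t$ is a smooth curve in the H\"older space with a well-defined velocity vector lying in $\TT_{\Phi(0)}\mathcal P(X)$. Everything else is the formal linearity argument above, so the proof is essentially the combination of this smoothness input, the linearity of the period map, and the cited characterization of degenerate vectors.
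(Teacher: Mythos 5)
Your proposal is correct and follows exactly the route the paper intends: the corollary is obtained by applying the Ruelle/Parry--Pollicott characterization of degenerate vectors to $\dot\Phi_0\in\TT_{\Phi(0)}\mathcal P(X)$ and then using the linearity of the period functional $g\mapsto\ell_g(a)$ (over the fixed orbits of the base flow $\phi$) to identify $\ell_{\dot\Phi_0}(a)$ with $-\left.\frac{\partial}{\partial t}\right|_{t=0}h(f_t)\ell_{f_t}(a)$. Your attention to the smooth dependence of $h(f_t)$ on $t$ and the justification for differentiating under the integral are exactly the points that make the corollary ``nearly immediate,'' as the paper describes it.
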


\subsection{Intersection and Pressure form}

Inspired by Bonahon's \cite{bonahon} intersection number, 
we define the {\em intersection number} $\II$ of two positive H\"older functions $f_1,f_2:X\to(0,\infty)$ by
 $$\II(f_1,f_2)=\lim_{T\to\infty} \frac{1}{\#R_T(f_1)}\sum_{a\in R_T(f_1)} \frac{\ell_{f_2}(a)}{\ell_{f_1}(a)}$$ 
and their {\em renormalized intersection number} by $$\JJ(f_1,f_2)=\frac{h(f_2)}{h(f_1)}\ \II(f_1,f_2).$$

Bowen's equidistribution theorem on periodic orbits \cite{bowen1} implies that $\II$, and hence $\JJ$, are well-defined
(see \cite[Section 3.4]{BCLS}). One may  use the analyticity of the pressure function and 
results of Parry-Pollicott \cite{parry-pollicott} and
Ruelle \cite{ruelle} to check that they are analytic functions.

\begin{proposition}
\label{entropy analytic}
{\rm (\cite[Prop. 3.12]{BCLS})} 
Let $\phi$ be a topologically transitive Anosov flow and let $\{f_u\}_{u\in M}$ and $\{g_u\}_{u\in M}$ be two analytic familes of positive H\"older functions on $X.$ Then $h(f_u)$ varies analytically over $M$ and $\II(f_u,g_u)$ and $\JJ(f_u,g_u)$
vary analytically over $M\times M$.
\end{proposition}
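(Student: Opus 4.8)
The plan is to reduce everything to the analyticity of the pressure function $\PP$ of the genuine Anosov flow $\phi$, combined with the standard variational description of $\II$ as an integral against an equilibrium state. Throughout I interpret the statement as asserting that the two-parameter function $(u_1,u_2)\mapsto\II(f_{u_1},g_{u_2})$, and likewise for $\JJ$, is analytic on $M\times M$. First I would record the analyticity of entropy over $M$. Setting $F(u,h)=\PP(-hf_u)$, Lemma~\ref{entropia2} characterizes $h(f_u)$ as the unique $h$ with $F(u,h)=0$, and the standard formula $D_g\PP(\psi)=\int_X\psi\,dm_g$, where $m_g$ is the equilibrium state of $g$ (see \cite{parry-pollicott}), gives $\partial_h F(u,h)=-\int_X f_u\,dm_{-hf_u}<0$ since $f_u>0$. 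Because $\PP$ is analytic (Ruelle) and $u\mapsto f_u$ is analytic, $F$ is analytic, so the analytic Implicit Function Theorem shows that $u\mapsto h(f_u)$ is analytic on $M$; in particular $u\mapsto -h(f_u)f_u$ is an analytic map from $M$ into H\"older space.

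Next I would produce a closed formula for $\II$ as a ratio of directional derivatives of $\PP$. The key identity is
$$\II(f_1,f_2)=\frac{\int_X f_2\,dm_{f_1}}{\int_X f_1\,dm_{f_1}},$$
where $m_{f_1}$ is the equilibrium state of $-h(f_1)f_1$ for $\phi$. To obtain it, I would pass to the reparametrized flow $\phi^{f_1}$: the change of variables $d\tau=f_1\,dt$ shows that $\ell_{f_2}(a)/\ell_{f_1}(a)$ equals the $\phi^{f_1}$-orbital average of $f_2/f_1$ along $a$, so Bowen's equidistribution theorem \cite{bowen1} applied to $\phi^{f_1}$ yields $\II(f_1,f_2)=\int_X (f_2/f_1)\,d\mu_{f_1}$, where $\mu_{f_1}$ is the measure of maximal entropy of $\phi^{f_1}$. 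Abramov's formula identifies $\mu_{f_1}$ with $f_1\,dm_{f_1}/\int_X f_1\,dm_{f_1}$, and substituting gives the displayed identity. Using $D_g\PP(\psi)=\int_X\psi\,dm_g$ with $g=-h(f_1)f_1$, this becomes
$$\II(f_1,f_2)=\frac{D_{-h(f_1)f_1}\PP(f_2)}{D_{-h(f_1)f_1}\PP(f_1)}.$$

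With this formula in hand the analyticity is essentially formal. The assignment $g\mapsto D_g\PP$ is analytic, being the derivative of the analytic map $\PP$, so $(u_1,u_2)\mapsto D_{-h(f_{u_1})f_{u_1}}\PP(g_{u_2})$ is a composition of analytic maps, and likewise $u_1\mapsto D_{-h(f_{u_1})f_{u_1}}\PP(f_{u_1})$ is analytic; the latter equals $\int_X f_{u_1}\,dm_{f_{u_1}}>0$, hence never vanishes. Therefore $(u_1,u_2)\mapsto\II(f_{u_1},g_{u_2})$ is analytic on $M\times M$, and since $h$ is analytic and strictly positive, $\JJ(f_{u_1},g_{u_2})=\tfrac{h(g_{u_2})}{h(f_{u_1})}\II(f_{u_1},g_{u_2})$ is analytic as well.

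I expect the main obstacle to lie in the second step rather than the third: converting the limit defining $\II$ into an integral against $\mu_{f_1}$ requires care, since equidistribution produces weak-$*$ convergence of the orbital measures only after averaging over $R_T(f_1)$, and one must justify integrating the fixed H\"older observable $f_2/f_1$ against this convergence and correctly handle the Abramov normalization. A secondary technical point is that every analyticity statement here (for $\PP$, for $D\PP$, and for the Implicit Function Theorem) takes place over the infinite-dimensional Banach space of H\"older functions, so one should confirm that compositions of analytic maps between such spaces remain analytic and that the families $\{f_u\}$ and $\{g_u\}$ genuinely land in a common space $\mathcal P^\alpha(X)$; once these are verified the argument closes.
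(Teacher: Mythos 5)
Your proposal is correct and follows essentially the same route as the paper (which defers to \cite[Section 3.4 and Prop.~3.12]{BCLS}): analyticity of entropy via Lemma~\ref{entropia2} and the analytic Implicit Function Theorem applied to $(u,h)\mapsto\PP(-hf_u)$, and analyticity of $\II$ via Bowen equidistribution plus Abramov's formula, yielding $\II(f_1,f_2)=\int f_2\,dm_{f_1}\big/\int f_1\,dm_{f_1}$ with $m_{f_1}$ the equilibrium state of $-h(f_1)f_1$, which is then a ratio of first derivatives of the analytic pressure function. The caveats you flag at the end (weak-$*$ convergence of the averaged orbital measures against a fixed H\"older observable, and working within a fixed $\mathcal P^\alpha(X)$) are exactly the technical points the cited reference handles.
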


The seminal work of Bowen and Ruelle \cite{BowenRuelle} may be used
to derive
the following crucial rigidity property for the renormalized intersection number.

\begin{proposition}
\label{renormalized rigidity}
{\rm (\cite[Prop. 3.8]{BCLS})} 
If $\phi$ is a topologically transitive Anosov flow  on $X$ and $f$ and $g$ are positive H\"older functions on $X$,
then 
$$\JJ(f,g)\ge 1.$$
Moreover, $\JJ(f,g)=1$ if and only if $h(f)f$ and $h(g)g$ are Liv\v sic cohomologous.
\end{proposition}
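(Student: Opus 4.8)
The plan is to rewrite both the intersection number and the two topological entropies in terms of equilibrium states, and then to extract the inequality from the variational principle for pressure. Set $F=-h(f)f$ and $G=-h(g)g$; by Lemma \ref{entropia2} these are the pressure-zero normalizations of $f$ and $g$, and the Bowen--Ruelle formalism endows them with unique equilibrium states $\mu_f$ and $\mu_g$. The first and most substantial step is to establish the formula
$$\II(f,g)=\frac{\int g\,d\mu_f}{\int f\,d\mu_f}.$$
To prove it I would first change variables to recognize $\ell_g(a)/\ell_f(a)$ as an orbital average for the reparametrized flow: since $\ell_g(a)=\int_0^{\ell_f(a)}(g/f)(\phi^f_\tau x)\,d\tau$, the ratio $\ell_g(a)/\ell_f(a)$ is exactly the average of $g/f$ along the closed $\phi^f$-orbit $a$. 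Bowen's equidistribution theorem, applied to the metric Anosov flow $\phi^f$, asserts that the normalized sum of the uniform measures carried by the closed orbits with $\ell_f(a)\le T$ converges to the measure of maximal entropy $M_f$ of $\phi^f$; integrating $g/f$ against this convergence yields $\II(f,g)=\int(g/f)\,dM_f$. The displayed formula then follows from the standard correspondence $dM_f=f\,d\mu_f/\int f\,d\mu_f$ between the invariant measures of $\phi$ and $\phi^f$, in which $\mu_f$ is identified, via Abramov's formula, as the measure maximizing $h_m(\phi)/\int f\,dm$, i.e. the equilibrium state of $F$.

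With this formula in hand, the remaining argument is purely thermodynamic. Because $\mu_f$ is the equilibrium state of $F$ and $\PP(F)=0$, the variational principle gives $h_{\mu_f}(\phi)=h(f)\int f\,d\mu_f$. Because $\PP(G)=0$, the same variational principle gives $h_m(\phi)-h(g)\int g\,dm\le 0$ for \emph{every} $\phi$-invariant probability measure $m$; applying this with $m=\mu_f$ yields $h_{\mu_f}(\phi)\le h(g)\int g\,d\mu_f$. Combining the two relations,
$$\JJ(f,g)=\frac{h(g)}{h(f)}\cdot\frac{\int g\,d\mu_f}{\int f\,d\mu_f}=\frac{h(g)\int g\,d\mu_f}{h_{\mu_f}(\phi)}\ge 1,$$
which is the asserted inequality.

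For the equality case, the computation shows $\JJ(f,g)=1$ exactly when $h_{\mu_f}(\phi)=h(g)\int g\,d\mu_f$, that is, when $\mu_f$ realizes the supremum defining $\PP(G)=0$. By uniqueness of the equilibrium state this is equivalent to $\mu_f=\mu_g$. I would then invoke the Bowen--Ruelle characterization that two H\"older potentials share an equilibrium state if and only if their difference is Liv\v sic cohomologous to a constant; since $F$ and $G$ both have pressure zero the constant must vanish, so $F$ and $G$ --- equivalently $h(f)f$ and $h(g)g$ --- are Liv\v sic cohomologous. The converse is immediate: if $h(f)f$ and $h(g)g$ are cohomologous then $\mu_f=\mu_g$ and the two identities above collapse to $\JJ(f,g)=1$.

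The main obstacle is the first step. The delicate point is not the algebra but the fact that the entire Bowen--Ruelle apparatus --- existence, uniqueness and the variational characterization of equilibrium states, together with equidistribution of closed orbits --- must be available for $\phi^f$, which is not a genuine Anosov flow but only a metric Anosov (Smale) flow. It is Pollicott's extension of this formalism to Smale flows that legitimizes both the equidistribution statement and the identification of $\mu_f$ as an equilibrium state; once the formula for $\II$ is secured, the rigidity follows quickly from the variational principle and the uniqueness of equilibrium states.
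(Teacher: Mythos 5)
Your proof is correct and follows essentially the same route as the proof the paper cites (Proposition 3.8 of \cite{BCLS}): there, too, one first establishes $\II(f,g)=\int g\,d\mu_{-h(f)f}\big/\int f\,d\mu_{-h(f)f}$ via Bowen's equidistribution theorem and the Abramov correspondence, and then deduces both the inequality and the equality case from the variational principle together with the uniqueness of equilibrium states and the Liv\v sic-type characterization of potentials sharing an equilibrium state. You also correctly identify the one genuinely delicate point, namely that the whole apparatus must be invoked for the reparametrized flow $\phi^f$, which is only metric Anosov, and that this is licensed by Pollicott's extension of the formalism to Smale flows.
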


If $\{f_u\}_{u\in M}$ is an analytic family of positive H\"older functions on $X$, then, for all $u\in M$,
we consider the function
$$\JJ_u:M\to \mathbb R$$ 
given by $\JJ_u(v)=\JJ(u,v)$ for all $v\in M$. Proposition \ref{renormalized rigidity} implies that the
Hessian of $J_u$ gives a non-negative bilinear form on $T_uM$. Lemma \ref{entropia2} allows us to
define a thermodynamic mapping 
$$\Phi:M\to \mathcal P(X)$$
by letting $\Phi(u)=-h(f_u)f_u$
The following important, but fairly simple, result shows that this bilinear form is the pull-back
of the pressure form.

\begin{proposition}
{\rm (\cite[Prop. 3.11]{BCLS})}
\label{pressure form is a Hessian}
Let $\phi$ be a topologically transitive  Anosov flow. If $\{f_t\}_{t\in (-1,1)}$ is
a smooth one parameter family of positive H\"older functions on $X$ and $\Phi:(-1,1)\to \mathcal P(X)$ is
given by $\Phi(t)=-h(f_t)f_t$, then 
$$\|\dot{\Phi}_0\|_\PP^2=\left.\frac{\partial^2}{\partial t^2}\right|_{t=0}\JJ(f_0,f_t).$$
\end{proposition}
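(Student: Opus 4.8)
The plan is to reduce the claim to two ingredients: an equidistribution formula writing $\JJ(f_0,f_t)$ as a ratio of integrals against a single fixed measure, and the observation that the thermodynamic curve $t\mapsto\Phi(t)=-h(f_t)f_t$ lies entirely in the zero level set of $\PP$. Throughout, let $m_0$ denote the equilibrium state of the pressure--zero function $\Phi(0)$; recall from Ruelle and Parry--Pollicott that for any H\"older $g$ the first derivative of pressure is integration against the equilibrium state, $D_g\PP(\psi)=\int_X\psi\,dm_g$, and that the second derivative $\frac{\partial^2}{\partial t^2}|_{t=0}\PP(g+t\psi)$ is the variance of $\psi$ with respect to $m_g$, which by definition is $\|\psi\|_\PP^2$ when $\psi\in\ker D_g\PP$.

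First I would establish
$$\JJ(f_0,f_t)=\frac{\int_X\Phi(t)\,dm_0}{\int_X\Phi(0)\,dm_0}.$$
This rests on Bowen's equidistribution theorem (the same input used above to define $\II$): the closed orbits of $\phi$, counted with $\phi^{f_0}$--period at most $T$, equidistribute toward the measure of maximal entropy of the reparametrized flow $\phi^{f_0}$. Since $\ell_{f_t}(a)/\ell_{f_0}(a)$ is exactly the $\phi^{f_0}$--orbit average of $f_t/f_0$ along $a$, passing to the limit identifies $\II(f_0,f_t)$ with the integral of $f_t/f_0$ against that measure; transferring from the $\phi^{f_0}$--invariant measure to the $\phi$--invariant measure $m_0$ (an Abramov change of variables) turns this into $\int_X f_t\,dm_0 /\int_X f_0\,dm_0$. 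Multiplying by $h(f_t)/h(f_0)$ and inserting $\Phi=-hf$ gives the displayed formula.

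The remaining, and genuinely easy, step is to differentiate. Because $\PP(\Phi(t))\equiv0$, the first derivative formula gives $\int_X\dot{\Phi}_0\,dm_0=D_{\Phi(0)}\PP(\dot{\Phi}_0)=0$; thus $\dot{\Phi}_0\in\ker D_{\Phi(0)}\PP=\TT_{\Phi(0)}\mathcal{P}(X)$, the seminorm $\|\dot{\Phi}_0\|_\PP$ is defined, and the first derivative of $\JJ(f_0,f_t)$ vanishes at $t=0$ (consistent with Proposition~\ref{renormalized rigidity}, which forces $t=0$ to be a minimum of $t\mapsto\JJ(f_0,f_t)$). Differentiating $\PP(\Phi(t))\equiv0$ once more and applying the chain rule yields
$$0=\frac{\partial^2}{\partial t^2}\Big|_{t=0}\PP(\Phi(t))=\|\dot{\Phi}_0\|_\PP^2+\int_X\ddot{\Phi}_0\,dm_0,$$
so that $\int_X\ddot{\Phi}_0\,dm_0=-\|\dot{\Phi}_0\|_\PP^2$. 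Since the denominator $\int_X\Phi(0)\,dm_0$ in the ratio formula is independent of $t$, the second derivative of $\JJ$ at $t=0$ is just $\int_X\ddot{\Phi}_0\,dm_0$ divided by that constant, and substituting the previous line produces the asserted identity.

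The structural differentiation is routine once the two pressure derivative formulas are available, so the step I expect to demand the most care is the equidistribution formula for $\JJ$, together with the attendant bookkeeping of normalizations: one must correctly match the measure of maximal entropy of $\phi^{f_0}$ with the equilibrium state $m_0$ of $\Phi(0)$ through Abramov's formula, and track the constant $\int_X\Phi(0)\,dm_0$ against the normalization of the pressure form, so that the pressure seminorm emerges with precisely the right scaling.
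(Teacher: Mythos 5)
Your route is the same as the one in \cite[Prop.\ 3.11]{BCLS}: first the equidistribution/Abramov identity
$\II(f_0,f_t)=\int f_t\,dm_0\big/\int f_0\,dm_0$, hence
$\JJ(f_0,f_t)=\int\Phi(t)\,dm_0\big/\int\Phi(0)\,dm_0$ with $m_0$ the equilibrium state of $\Phi(0)$, and then two differentiations of the identity $\PP(\Phi(t))\equiv 0$. Both ingredients are correct, and the first derivative computation (showing $\dot\Phi_0\in\ker D_{\Phi(0)}\PP$ and that $t=0$ is a critical point of $\JJ$) is exactly right.

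The one step that does not close as written is the final substitution. Your chain rule gives
$\int\ddot\Phi_0\,dm_0=-\operatorname{Var}(\dot\Phi_0,m_0)$, and dividing by the constant denominator yields
$$\left.\frac{\partial^2}{\partial t^2}\right|_{t=0}\JJ(f_0,f_t)=\frac{\operatorname{Var}(\dot\Phi_0,m_0)}{-\int_X\Phi(0)\,dm_0},$$
where $-\int_X\Phi(0)\,dm_0=h(f_0)\int_X f_0\,dm_0=h(\phi,m_0)$ is the measure-theoretic entropy of $m_0$, which is not $1$ in general. So the Hessian of $\JJ$ equals the \emph{normalized} variance, which is precisely how \cite{BCLS} defines the pressure form, namely $\|g\|_\PP^2=\operatorname{Var}(g,m_f)/(-\int_X f\,dm_f)$; with that definition your computation finishes the proof verbatim. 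With the bare second derivative of pressure (the unnormalized variance, as in the survey's displayed definition of $\|\cdot\|_\PP$) the factor $h(\phi,m_0)$ genuinely survives and the asserted equality fails, e.g.\ already on Teichm\"uller space away from the base point $\rho_0$. You correctly identified the normalization as the delicate point, but the statement ``substituting the previous line produces the asserted identity'' needs to be replaced by an explicit appeal to the normalized definition of the pressure form; nothing else in the argument requires change.
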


\section{Basic strategy}

Our basic strategy is inspired by McMullen's \cite{mcmullen-pressure} re-interpretation of Thurston's Riemannian metric
on Teichm\"uller space and its generalization to quasifuchsian space by Bridgeman \cite{bridgeman-wp}.

We consider a family  $\{\rho_u:\Gamma\to \ms G\}_{u\in M}$ of (conjugacy classes of )
representations of a word hyperbolic group $\Gamma$
into  a semi-simple Lie group $\ms G$ parametrized by an analytic manifold $M$. We recall that Gromov \cite{gromov}
associated a geodesic flow $\phi=\{\phi_t:U_\Gamma\to U_\Gamma\}_{t\in\mathbb R}$ to a hyperbolic group $\Gamma$
which agrees with the geodesic flow on $\TT^1X$ in the case when $\Gamma$ is the fundamental group of
a negatively curved manifold $X$ (see Section \ref{general} for details).

In our two basic examples $\Gamma=\pi_1(S)$ where $S$ is a closed, oriented surface of genus at least $2$,
and the flow $\phi$ is the geodesic flow on a hyperbolic surface homeomorphic to $S$.  
The first example will be the classical Teichm\"uller space $\mathcal T(S)$ of hyperbolic structures on $S$, where 
\hbox{$\ms G=\ms{PSL}_2(\mathbb R)$} and \hbox{$M=\mathcal T(S)$}. 
The  second is  the Hitchin component $\mathcal H_d(S)$,
where \hbox{$\ms G=\psln$} and \hbox{$M=\mathcal H_d(S)$}.

\medskip\noindent
{\bf Step 1:}  {\em Associate to each representation $\rho_u$ a  topologically transitive 
metric Anosov flow $\phi^{\rho_u}$ which is H\"older orbit equivalent to the geodesic flow  $\phi$ of $\Gamma$
so that the period of the orbit associated to $\gamma\in\Gamma$ is the ``length'' of $\rho(\gamma)$.}

\medskip

In the case of $\mathcal T(S)$, $\phi^\rho$ will be the geodesic flow of the surface $X_\rho=\mathbb H^2/\rho(S)$.
In the case of a Hitchin component, we will construct a geodesic flow and our notion of length will be the logarithm
of the spectral radius.

If $u\in M$, Lemma \ref{time-conjugacy} provides a positive H\"older function $f_u:U_\Gamma\to \mathbb R$, well-defined up to
Liv\v sic cohomology, such that $\phi^{\rho_u}$ is H\"older
conjugate to $\phi^{f_u}$.

\medskip\noindent
{\bf Step 2:} {\em Define a thermodynamic mapping $\Phi:M\to \mathcal{H}(U_\Gamma)$ by letting
$\Phi(u)=[-h(f_u)f_u]$ and prove that it
has locally analytic lifts, i.e. if \hbox{$u\in M$}, then there exists a neighborhood $U$ of $u$ in $M$ and an
analytic map \hbox{$\tilde\Phi:U\to \mathcal{P}(U_\Gamma)$} which is a lift of $\Phi|_U$.}

\medskip

We may also define a renormalized intersection number on $M\times M$, by letting $\JJ(u,v)=\JJ(f_u,f_v)$.

\medskip\noindent
{\bf Step 3:} {\em Define a pressure form on $M$ by pulling back the pressure from on $\mathcal P(U_\Gamma)$ by (the lifts of) $\Phi$.}

\medskip

Lemma \ref{pressure form is a Hessian} allows us to reinterpret the pull-back of the pressure form
as the Hessian of the renormalized intersection number function.

\medskip\noindent
{\bf Step 4:} {\em Prove, using Corollary \ref{degenerate vectors},
that the resulting pressure form is non-degenerate so gives rise
to an analytic Riemannian metric on $M$.}

\medskip

Step 4 can fail in certain situations. For example, Bridgeman's pressure metric on quasifuchsian
space \cite{bridgeman-wp} is degenerate exactly on the set of pure bending vectors on the Fuchsian locus. However, Bridgeman's
pressure metric still gives rise to a path metric.

\medskip\noindent
{\bf Historical remarks:} Thurston's constructed a Riemannian metric which he describes as the ``Hessian of the length of a random geodesic.''
Wolpert's formulation \cite{wolpert-thurston} of this construction agrees with the  Hessian of the intersection number of the geodesic flows.
From this viewpoint, one regards $\II(\rho,\eta)$, as the length in $X_\eta$ of a random unit length geodesic on $X_\rho$.
If one considers a sequence $\{\gamma_n\}$ of closed geodesics
on $X_\rho$ which are becoming equidistributed (in the sense that $\{{\gamma_n\over \ell_\rho(\gamma_n)}\}$
converges, in the space of geodesic currents on $S$, to the Liouville current $\nu_\rho$ of $X_\rho$),
then 
$$\II(f_\rho,f_\eta)=\lim \frac{\ell_{\eta}(\gamma_n)}{\ell_\rho(\gamma_n)}.$$
Bonahon \cite{bonahon} reinterprets this to say that 
$$\II(f_\rho,f_\eta)=i(\nu_\rho,\nu_\eta)$$
where $i$ is the geometric intersection pairing on the space of geodesic currents.

Bridgeman and Taylor \cite{bridgeman-taylor} used Patterson-Sullivan theory to show that the Hessian of the renormalized intersection
number is a non-negative form on quasifuchsian case.
McMullen  \cite{mcmullen-pressure} then introduced the use of the techniques of Thermodynamic Formalism 
to interpret both of these metrics
as pullbacks of the pressure metric on the space of suspension flows on the shift space associated to the Bowen-Series coding.
Bridgeman \cite{bridgeman-wp} then showed that the resulting pressure form on quasifuchsian space is degenerate exactly on the
set of pure bending vectors on the Fuchsian locus.

\section{The pressure metric for Teichm\"uller space}

In this section, we survey the construction of the pressure metric for the Teichm\"uller space $\mathcal{T}(S)$  of
a closed  oriented surface $S$ of genus $g\ge 2$. 

We recall that $\mathcal{T}(S)$ may be defined as the unique connected component of
$${\rm Hom}(\pi_1(S),\ms{PSL}_2(\mathbb R))/\ms{PGL}_2(\mathbb R)$$
which consists of discrete and faithful representations. If $\rho\in \mathcal{T}(S)$,
then one obtains a hyperbolic surface $X_\rho=\mathbb H^2/\rho(\pi_1(S))$ by regarding
${\rm PSL}_2(\mathbb R)$ as the space of orientation-preserving isometries of the hyperbolic
plane $\mathbb H^2$. 

\subsection{Basic facts}
\label{basic facts}
It is useful to isolate the facts that will make the construction much simpler in this case.
All of these facts will fail even in the setting of the Hitchin component.

\begin{enumerate}
\item The space $\TT^1\H^2$ is canonically identified with the space of ordered triplets on the visual boundary $\partial_\infty\H^2,$ 
$$(\partial_\infty\H^2)^{(3)}=\{(x,y,z)\in (\partial_\infty\H^2)^3:x<y<z\},$$ where $<$ is defined by a given orientation on the topological circle $\partial_\infty\H^2,$
and $(x,y,z)$ is identified with the unit tangent vector to the geodesic $L$ joining $x$ to $z$ at the point which is the orthogonal projection of
$y$ to $L$. 
\item
The surface $X_\rho$ is closed (since it is a surface homotopy equivalent to a closed surface). In fact, by Baer's Theorem, it is diffeomorphic to $S$.
The geodesic flow $\phi^\rho$ on $\TT^1X_\rho$ is thus a topologically transitive Anosov flow on a closed manifold.

\item
The topological entropy $h(\rho)$ of $\phi^\rho$ is equal to 1 (in particular, constant).
\end{enumerate}

Fact (1) is quite straight-forward: if $(p,v)\in\TT^1\H^2,$ denote by $v_\infty\in\partial_\infty\H^2$ the limit at $+\infty$
of the geodesic ray starting at $(p,v),$ then the identification is 
$$(p,v)\mapsto ((-v)_\infty, (iv)_\infty,v_\infty)$$ 
where $iv\in\TT^1\H^2$ is such that the base $\{v,iv\}$ is orthogonal and oriented.

Fact (3) is a standard consequence of the fact, due to Manning \cite{manning}, that the entropy of
the geodesic flow of a negatively curved manifold agrees with the exponential rate of volume growth of
a ball of radius $T$ in its universal cover. In this setting, the universal cover is always $\mathbb H^2$ so the entropy is 
always 1.

\medskip\noindent
{\bf Conventions:} 
For the remainder of the section we fix $\rho_0\in\mathcal{T}(S)$ and identify $S$ with $X_{\rho_0}$. We then obtain an identification of 
$\bgrf$ with $\partial_\infty \H^2$ and of $\TT^1S$ with $\TT^1X_{\rho_0}$. Let $\phi=\phi^{\rho_0}$ be the geodesic flow on $S$.

It will be useful to choose an analytic lift $s:\mathcal{T}(S)\to \rm{Hom}(\pi_1(S),\ms{PSL}_2(\mathbb R)).$ 
In order to do so, we pick non-commuting elements
$\alpha$ and $\beta$ in $\pi_1(S)$ and choose a representative $\rho=s([\rho])$ of $[\rho]$  
such that $\rho(\alpha)$ has attracting fixed point 
$+\infty\in\partial_\infty\mathbb H^2$ and repelling fixed point 0, 
while $\rho(\beta)$ has attracting fixed point 1. From now on, we will implicitly identify
$\mathcal{T}(S)$ with $s(\mathcal{T}(S))$.
This choice will allow us to define our thermodynamic mapping into  the space $\mathcal{P}(\TT^1S)$ 
of pressure zero H\"older functions on $\TT^1S$, rather than just into the space
$\mathcal H(\TT^1S)$ of Liv\v sic cohomology classes of pressure zero H\"older functions on $X$.

\subsection{Analytic variation of limit maps}
It is well known that any two Fuchsian representations are conjugate by a unique H\"older map.

\begin{proposition}
\label{morse} 
If $\rho,\eta\in\mathcal{T}(S)$,
then there is a unique $(\rho,\eta)$-equivariant H\"older homeomorphism
$\xi_{\rho,\eta}:\partial_\infty\H^2\to\partial_\infty\H^2$. Moreover, $\xi_{\rho,\eta}$ varies analytically
in $\eta$.
\end{proposition}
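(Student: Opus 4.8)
The plan is to dispatch existence and uniqueness by elementary hyperbolic geometry, using only the facts recorded in Section~\ref{basic facts}, and to isolate the analytic dependence on $\eta$ as the substantive point. First I would construct $\xi_{\rho,\eta}$. Since $\rho$ and $\eta$ are both discrete, faithful, and cocompact, the quotients $X_\rho$ and $X_\eta$ are closed hyperbolic surfaces, and a homotopy equivalence $X_\rho\to X_\eta$ inducing $\eta\circ\rho^{-1}$ on fundamental groups lifts to a $(\rho,\eta)$-equivariant map $\tilde f:\H^2\to\H^2$; because both actions are cocompact, $\tilde f$ is a quasi-isometry. By the Morse Lemma the $\tilde f$-image of a geodesic is a quasigeodesic lying within bounded Hausdorff distance of a genuine geodesic, so $\tilde f$ extends continuously to a homeomorphism $\xi_{\rho,\eta}:\partial_\infty\H^2\to\partial_\infty\H^2$, and passing to the boundary in the equivariance relation $\tilde f\circ\rho(\gamma)=\eta(\gamma)\circ\tilde f$ yields the asserted equivariance. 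That $\xi_{\rho,\eta}$ is H\"older follows from the standard fact that a quasi-isometry of $\H^2$ induces a quasisymmetric, hence bi-H\"older, boundary map.

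For uniqueness I would observe that equivariance forces any such map to send the attracting fixed point of $\rho(\gamma)$ to that of $\eta(\gamma)$ for every nontrivial $\gamma\in\pi_1(S)$ (compare the dynamics of forward iterates on either side). Since $\rho$ is cocompact Fuchsian these fixed points are dense in $\partial_\infty\H^2$, so a continuous equivariant map is determined on a dense set and is therefore unique.

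The hard part is the analytic dependence on $\eta$. On the dense set of attracting fixed points this is already visible: by the fixed-point equation above, $\xi_{\rho,\eta}$ sends a fixed point of $\rho(\gamma)$ to the attracting eigendirection of $\eta(\gamma)$, which is an analytic function of the matrix $\eta(\gamma)$ and hence, through the analytic lift $s$, an analytic function of $\eta$. The difficulty is to upgrade this pointwise analyticity on a dense set to analyticity of the whole map in the H\"older topology. The approach I would take is to realize $\xi_{\rho,\eta}$ as the unique fixed point of an operator $\mathcal T_\eta$ acting on a Banach space of H\"older maps of $\partial_\infty\H^2$, built from the contraction/expansion dynamics of the $\rho$- and $\eta$-actions (a graph transform, or equivalently an averaging operator exploiting the expansivity of the convergence action). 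The operator $\mathcal T_\eta$ depends analytically on $\eta$, since $\eta$ enters only through the analytic matrices $\eta(\gamma)$, and I expect it to be a uniform contraction in the H\"older norm on a neighborhood of $\rho$; the analytic implicit function theorem then produces a fixed point varying analytically with $\eta$, which by uniqueness must be $\xi_{\rho,\eta}$. The two points needing care are verifying that $\mathcal T_\eta$ is genuinely a uniform contraction in the H\"older norm---this is where the simplifications of Section~\ref{basic facts}, namely that the limit set is the entire circle and that the relevant flag variety is one-dimensional, keep the estimates clean---and that the contraction constant is locally uniform in $\eta$. An alternative route, available because the target is $\ms{PSL}_2$, is to complexify: deform $\eta$ into nearby quasi-Fuchsian representations in $\ms{PSL}_2(\mathbb C)$ and invoke the holomorphic motion of the limit set (the $\lambda$-lemma), exhibiting $\eta\mapsto\xi_{\rho,\eta}$ as the restriction of a holomorphic family and hence real-analytic; here the cost is maintaining uniform control of the H\"older exponent along the deformation.
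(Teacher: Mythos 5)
Your proposal is correct and matches the paper's proof in all essentials: existence via an equivariant quasi-isometry extending to a H\"older (quasisymmetric) boundary map, uniqueness from density of attracting fixed points, and analyticity either by complexifying to quasi-Fuchsian representations and invoking holomorphic motions with Slodkowski's $\lambda$-lemma, or by a Hirsch--Pugh--Shub-style contraction/implicit-function argument. The only difference is one of emphasis: the paper leads with the holomorphic-motions argument (following McMullen) and mentions the fixed-point-operator route as an alternative, whereas you reverse that order.
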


\begin{proof}
By fact (2), there exists a diffeomorphism $h:X_\rho\to X_\eta$ in the homotopy class determined
by $\eta\circ \rho^{-1}$. Choose a $(\rho,\eta)$-equivariant lift $\tilde h:\mathbb H^2\to\mathbb H^2$ of $h$.
Since $\tilde h$ is quasiconformal, classical results in complex analysis 
(see Ahlfors-Beurling \cite{ahlfors-beurling}), imply that $\tilde h$
extends to a quasisymmetric map $\xi_{\rho,\eta}:\partial_\infty\H^2\to\partial_\infty\H^2.$ 
In particular, $\xi_{\rho,\eta}$ is a H\"older homeomorphism.
Since $\tilde h$ is $(\rho,\eta)$-equivariant, so is $\xi$.  
The resulting map is unique, since, by equivariance, if $\gamma\in \pi_1(S)$, then $\xi_{\rho,\eta}$ must take the 
attracting fixed point of $\rho(\gamma)$ to the attracting fixed point of $\eta(\gamma)$.

A more modern approach to the existence of $\xi_{\rho,\eta}$ 
uses the fact that $\mathbb H^2$ is a proper hyperbolic metric space with
boundary $\partial_\infty \mathbb H^2$ and that quasi-isometries of proper hyperbolic metric spaces
extend to H\"older homeomorphisms of their boundary. Since $h$ is a bilipschitz homeomorphism,
it lifts to a bilipschitz homeomorphism of $\mathbb H^2$. In particular, $\tilde h$ is a quasi-isometry of $\mathbb H^2$.

It is a classical result in Teichm\"uller
theory that $\xi_{\rho,\eta}$ varies analytically in $\eta$. 
A more modern, but still complex analytic, approach uses holomomorphic motions and is sketched by 
McMullen \cite[Section 2]{mcmullen-pressure}.  One allows $\eta$ to vary over the space 
$QF(S)$ of (conjugacy classes of) convex cocompact (i.e. quasifuchsian) representations of 
$\pi_1(S)$ into ${\rm PSL}_2(\C)$. 
(Recall that $QF(S)$ is an open neighborhood of $\mathcal T(S)$ in the $\ms{PSL}_2(\C)$-character
variety of $\pi_1(S)$.)
If $\eta\in QF(S)$, there is a $(\rho,\eta)$-equivariant embedding 
$\xi_{\rho,\eta}:\partial_\infty\mathbb H^2\to\widehat{\mathbb C}$ whose
image is the limit set of $\eta(\pi_1(S))$. 
If $z\in \partial_\infty\mathbb H^2$ is a fixed point of a non-trivial element
$\rho(\gamma)$, then $\xi_{\rho,\eta}(z)$ varies holomorphically in $\eta$. Slodkowski's generalized Lambda Lemma
\cite{slodkowski} then implies that $\xi_{\rho,\eta}$ varies complex analytically as $\eta$ varies over $QF(S)$,
and hence varies real analytically  as $\eta$ varies over $\mathcal T(S)$. 

One may also prove analyticity by using techniques of Hirsch-Pugh-Shub \cite{hirsch-pugh-shub} 
as discussed in the next section.
\end{proof}

\subsection{The thermodynamic mapping} 

The next proposition allows us to construct the thermodynamic mapping we use to define the pressure metric.

\begin{proposition}
\label{proposition:function} For every $\eta\in\mathcal T(S),$ there exists a positive H\"older function 
$f_\eta:\TT^1S\to\mathbb (0,\infty)$ such that  
$$\int_{[\g]}f_\eta=\ell_\eta(\g)$$
for all $\g\in\pi_1(S)$. 
Moreover, $f_\eta$ varies analytically in $\eta$.
\end{proposition}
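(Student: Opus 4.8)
The plan is to produce $f_\eta$ as a reparametrization function of the geodesic flow $\phi=\phi^{\rho_0}$ on $\TT^1S$ whose periods are the $\eta$-translation lengths, and then to read off its analytic dependence on $\eta$ from the analytic dependence of the boundary map $\xi_{\rho_0,\eta}$ supplied by Proposition \ref{morse}.

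First I would build a H\"older orbit equivalence from $\phi$ to the $\eta$-geodesic flow $\phi^\eta$. Using Fact (1), identify $\TT^1\H^2$ with $(\partial_\infty\H^2)^{(3)}$ and let $\xi=\xi_{\rho_0,\eta}$ act diagonally on triples, $(a,b,c)\mapsto(\xi a,\xi b,\xi c)$. Since $\xi$ is a H\"older homeomorphism intertwining the $\rho_0$-action and the $\eta$-action, this descends to a H\"older homeomorphism $\TT^1X_{\rho_0}\to \TT^1X_\eta$ carrying each oriented $\rho_0$-geodesic $(a,c)$ to the oriented $\eta$-geodesic $(\xi a,\xi c)$, and hence sending $\phi$-orbits to $\phi^\eta$-orbits while preserving their orientation. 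By Lemma \ref{time-conjugacy} there is a positive H\"older function $f_\eta:\TT^1S\to(0,\infty)$ such that $\phi^{f_\eta}$ is H\"older conjugate to $\phi^\eta$. The closed orbit of $\phi$ labelled by the conjugacy class of $\g$ is the $\rho_0(\g)$-axis, which is carried to the $\eta(\g)$-axis; comparing periods gives $\int_{[\g]}f_\eta=\ell_{f_\eta}(\g)=\ell_\eta(\g)$, the required identity, and positivity of $f_\eta$ reflects that the $\eta$-geodesics are traversed at positive speed.

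For the analytic dependence I would exhibit a representative of $f_\eta$ built only from boundary evaluations of $\xi$, so that $\xi$ is never differentiated. The relevant datum is the $\eta$-Busemann cocycle $\beta_{\xi(c)}(p,q)=\lim_{z\to\xi(c)}\big(d(q,z)-d(p,z)\big)$ pulled back by $\xi$; evaluated along the image geodesic it records $\eta$-arclength, and its period over the class of $\g$ is exactly $\ell_\eta(\g)$. Because $\xi_{\rho_0,\eta}$ varies analytically in $\eta$ by Proposition \ref{morse}, and the Busemann function is real-analytic in its boundary and interior arguments, this cocycle varies analytically in $\eta$. Passing from the cocycle to a H\"older function on $\TT^1S$, in the spirit of the reparametrization in Sambarino \cite{sambarino-quantitative}, then yields the family $f_\eta$.

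The main obstacle is precisely this last passage together with its analyticity. The na\"ive reparametrization---arclength along the image geodesic as one flows along the source geodesic---is only H\"older, not $C^1$, in the flow time, because $\xi$ is merely H\"older; it is therefore not literally of the form $\int_0^t f_\eta(\phi_s\cdot)\,ds$ for a continuous $f_\eta$, and one must replace it by a Liv\v sic-cohomologous, genuinely H\"older rate function. Tracking the analyticity of $\eta\mapsto f_\eta$ through this cohomological adjustment is the crux. I would handle it by realizing the conjugacy---equivalently the section encoding $f_\eta$---as the fixed section of a fibre contraction over $\TT^1S$ depending analytically on $\eta$, and invoking the invariant-section theorem of Hirsch-Pugh-Shub \cite{hirsch-pugh-shub}, exactly as indicated after Proposition \ref{morse}; the analytic dependence of the fixed section on the parameter then gives the analyticity of $f_\eta$ asserted in the statement.
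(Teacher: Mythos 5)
Your construction of the orbit equivalence and the existence of $f_\eta$ with the correct periods is exactly the paper's: the diagonal action of $\xi_{\rho_0,\eta}$ on boundary triples gives a H\"older orbit equivalence from $\phi$ to $\phi^\eta$, and Lemma \ref{time-conjugacy} produces $f_\eta$. You have also correctly isolated the crux of the analyticity claim, namely that the arclength cocycle along the image geodesic is only H\"older in the flow time, so it cannot be differentiated directly to yield a continuous rate function. Where you diverge is in how this is resolved. The paper does not realize $f_\eta$ as an invariant section of a fibre contraction; it uses an elementary averaging device instead. Writing the cocycle explicitly as a cross-ratio,
$$\kappa_\eta((x,y,z),t)=\log\bigl|B\bigl(\xi(x),\xi(z),\xi(y),\xi(u_t(x,y,z))\bigr)\bigr|,$$
it sets $\kappa_\eta^1(v,t)=\int_0^1\kappa_\eta(v,t+s)\,ds$ and defines $f_\eta(v)=\left.\frac{\partial}{\partial t}\right|_{t=0}\kappa_\eta^1(v,t)$, which by the cocycle identity $\kappa_\eta(v,t+s)=\kappa_\eta(v,s)+\kappa_\eta(\phi_sv,t)$ equals $\kappa_\eta(v,1)$. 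This is manifestly a positive H\"older function of $v$, it is an explicit algebraic expression in finitely many boundary values of $\xi_{\rho_0,\eta}$, so its analytic dependence on $\eta$ follows at once from Proposition \ref{morse} with no further machinery, and the same cocycle identity shows the averaged function still has periods $\ell_\eta(\g)$. Your Busemann-cocycle formulation is equivalent to the cross-ratio one, so your argument closes up once the fibre-contraction step is replaced by this averaging; as written, that final step is the one place where your proof is a plan rather than an argument (you never specify the bundle, the contraction, or why the fixed section recovers a reparametrization function with the stated periods), and it imports far more than the problem requires. The paper does remark that analyticity of the reparametrizations can alternatively be obtained via the techniques of Katok--Knieper--Pollicott--Weiss, so your instinct is not unreasonable, but the Hirsch--Pugh--Shub approach is reserved in this survey for the analyticity of the limit maps themselves, which in the Teichm\"uller case is already supplied by Proposition \ref{morse}.
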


\begin{proof}
Let $\xi_{\rho_0,\eta}$ be the $(\rho_0,\eta)$-equivariant map provided by Proposition \ref{morse}. 
The identification of $\TT^1\mathbb H^2$ with $\bgrf^{(3)}$ gives a $(\rho_0,\eta)$-equivariant H\"older homeomorphism 
$\tilde\sigma:\TT^1\H^2\to\TT^1\H^2$ defined by 
$$\tilde \sigma(x,y,z)=(\xi_{\rho_0,\eta}(x),\xi_{\rho_0,\eta}(y),\xi_{\rho_0,\eta}(z)).$$

Since $\tilde\sigma$ is a $(\rho_0,\eta)$-equivariant map sending geodesics to geodesics, 
the quotient $\sigma:\TT^1S\to\TT^1X_\eta$ is a H\"older orbit equivalence between the geodesic flows $\phi=\phi^{\rho_0}$ and $\phi^{\eta}.$ 
Lemma \ref{time-conjugacy} gives the existence of a function $f_\eta,$ but in order to establish the analytic variation
we give an explicit construction.

If $a,b,c,d \in \partial_\infty \mathbb H^2$, then the signed-distance between the orthogonal projections of $b$ and $c$ 
onto the geodesic with endpoints $a$ and $b$ is $\log|B(a,b,c,d)| $ where 
$$B(a,b,c,d) = \frac{(a-c)(a-d)}{(b-d)(b-c)}$$  
is the cross-ratio. Let 
$$\kappa_{\rho,\eta}((x,y,z),t) = \log(B(\xi_{\rho_0,\eta}(x),\xi_{\rho_0,\eta}(z), \xi_{\rho_0,\eta}(y),\xi_{\rho_0,\eta}(u_t(x,y,z))$$
where  $u_t$ is determined
by $\phi^{\rho_0}_t(x,y,z) = (x, u_t(x,y,z), z)$. We average $\kappa_{\rho,\eta}$ over intervals of length one in the flow to obtain
$$\kappa_{\rho_0,\eta}^1((x,y,z),t)=\int_0^1 \kappa_{\rho_0,\eta}((x,y,z),t+s)\ ds.$$
Then 
$$f_\eta(x,y,z)=\left.\frac{\partial}{\partial t}\right|_{t=0} \kappa_{\rho_0,\eta}^1((x,y,z),t)$$
is H\"older and varies analytically in $\eta$.

One may also prove the analyticity of the reparametrizations in this setting using the
techniques of Katok-Knieper-Pollicott-Weiss \cite{KKPW}.
\end{proof}

Since $h(\phi^\eta)=h(f_\eta)=1,$  by Fact (3) in Section \ref{basic facts}, Lemma \ref{entropia2} implies that 
$\PP(-f_\eta)=0$, where $\PP$ is the pressure function associated to the geodesic flow $\phi^{\rho_0}$ on
our base surface $S=X_{\rho_0}$.
Hence, Proposition \ref{proposition:function}  provides an analytic map $\Phi:\mathcal T(S)\to \mathcal P(\TT^1S)$ 
from the Teichm\"uller space $\mathcal{T}(S)$ to the space $\mathcal P(\TT^1S)$ 
of pressure zero functions on the unit tangent bundle $\TT^1S$, given by
$$\Phi(\eta)=-f_\eta.$$
We call $\Phi$ {\em the thermodynamic mapping.} We note that $\Phi$ depends on our choice of 
$\rho_0 \in \mathcal{T}(S)$ and  on the lift $s:\mathcal{T}(S)\to \ms{Hom}(\pi_1(S),\ms{PSL}_2(\mathbb R)).$

\subsection{The pressure metric}
\label{teich-pressure}
We may then define a pressure form on $\mathcal T(S)$ by pulling back the pressure form
on $\mathcal P(\TT^1S)$. Explicitly, if  $\{\eta_t\}_{t\in (-1,1)}$ is an analytic path in 
$\mathcal T(S)$, then we define
$$\|\dot\eta_0\|_\PP^2=||d\Phi(\dot\eta_0)||_\PP^2.$$
Proposition \ref{pressure form is a Hessian} will allow us to identify the pressure form with
the Hessian of the  intersection number $\II$.

\begin{theorem}{\rm (Thurston, Wolpert \cite{wolpert-thurston}, McMullen \cite{mcmullen-pressure})}
The pressure form is an analytic Riemannian metric
on $\mathcal{T}(S)$ which is invariant under the mapping class group and independent of the reference metric $\rho_0.$
Moreover, the resulting pressure metric is a constant multiple of the Weil-Petersson metric on
$\mathcal{T}(S)$.
\end{theorem}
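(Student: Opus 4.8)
The plan is to carry out Steps 3 and 4 of the basic strategy, deducing each assertion from the thermodynamic machinery already assembled, and then to reduce the final clause to the classical identification with the Weil--Petersson metric. I would first record analyticity and positive semi-definiteness together. Since $h(\phi^\eta)=1$ for every $\eta$ by Fact (3), the thermodynamic mapping is simply $\Phi(\eta)=-f_\eta$ and $\JJ(\eta_0,\eta)=\II(\eta_0,\eta)$. Proposition \ref{entropy analytic} shows $\JJ$ is analytic on $\mathcal T(S)\times\mathcal T(S)$, while Proposition \ref{pressure form is a Hessian} identifies the pulled-back pressure form with the Hessian $\frac{\partial^2}{\partial t^2}|_{t=0}\JJ(\eta_0,\eta_t)$; together these yield that the pressure form is an analytic symmetric $2$-tensor on $\mathcal T(S)$. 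Proposition \ref{renormalized rigidity} gives $\JJ(\eta_0,\eta)\ge 1$ with equality at $\eta=\eta_0$, so $\eta\mapsto\JJ(\eta_0,\eta)$ attains an interior minimum at $\eta_0$ and its Hessian there is positive semi-definite; hence so is the pressure form.

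Next I would establish non-degeneracy via Corollary \ref{degenerate vectors}. Because $h\equiv 1$, a tangent vector $\dot\eta_0$ to an analytic path $\{\eta_t\}$ is degenerate precisely when $\frac{\partial}{\partial t}|_{t=0}\ell_{\eta_t}(\g)=0$ for every $\g\in\pi_1(S)$, i.e. when the differentials of all geodesic length functions annihilate $\dot\eta_0$. It is classical that finitely many suitably chosen length functions furnish real-analytic local coordinates on $\mathcal T(S)$, so the differentials $\{d\ell_\g\}_\g$ separate tangent vectors; therefore degeneracy forces $\dot\eta_0=0$, and the pressure form is a genuine analytic Riemannian metric.

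The two invariance statements I would extract from the intrinsic nature of $\JJ$. The renormalized intersection number $\JJ(\eta_0,\eta)=\II(\eta_0,\eta)$ is built only from the periods $\{\ell_{\eta_0}(\g)\}$ and $\{\ell_\eta(\g)\}$, with no reference to the coding manifold $\TT^1X_{\rho_0}$; since by Proposition \ref{pressure form is a Hessian} the metric is the Hessian of this function, it is independent of the reference metric $\rho_0$ (and of the lift $s$) used to define $\Phi$. A mapping class, represented by an automorphism of $\pi_1(S)$, merely permutes the conjugacy classes and so preserves the entire collection of lengths, hence $\JJ$, hence its Hessian; thus the metric is mapping-class-group invariant.

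The main obstacle is the final clause, proportionality to the Weil--Petersson metric. Here I would first identify the pressure form with Thurston's random-geodesic metric: choosing geodesics $\g_n$ on $X_{\rho_0}$ equidistributing to the Liouville current (the equidistribution already used to define $\II$), the identity $\II(\eta_0,\eta)=\lim_n \ell_\eta(\g_n)/\ell_{\eta_0}(\g_n)$ lets one write the Hessian as a limit of normalized Hessians $\hess_{\eta_0}\ell_{\g_n}/\ell_{\eta_0}(\g_n)$, which is exactly Thurston's construction. I would then quote Wolpert's theorem \cite{wolpert-thurston} that Thurston's metric is a constant multiple of the Weil--Petersson metric; alternatively one pins down the constant directly through McMullen's \cite{mcmullen-pressure} thermodynamic variance formula, expressing $\|\dot\eta_0\|_\PP^2$ as a variance of the derivative reparametrization against the Bowen--Margulis measure and evaluating it on the harmonic Beltrami differential representing $\dot\eta_0$. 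The delicate points are justifying the exchange of limit and second derivative in the equidistribution identity and computing the universal constant; both are carried out in the cited works, and I would defer to them rather than reproduce the hyperbolic-geometry computation.
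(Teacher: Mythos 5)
Your proposal is correct and follows essentially the same route as the paper: non-degeneracy via Corollary \ref{degenerate vectors} combined with the fact that finitely many length functions give analytic coordinates on $\mathcal T(S)$ (the paper cites Schmutz for a $6g-5$-tuple), invariance from the intrinsic expression of $\II$ in terms of translation lengths, and the Weil--Petersson clause by identifying the form with Thurston's random-geodesic Hessian and quoting Wolpert. The only additions are your explicit semi-definiteness argument from the interior minimum of $\JJ$ and the caveat about interchanging limits, both of which are consistent with (and implicit in) the paper's treatment.
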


\begin{proof} 
We first show that the pressure form is non-degenerate, so gives rise to a Riemannian metric.
Consider an analytic path $\{\eta_t\}_{(-1,1)}\subset \mathcal T(S)$. If $\| d\Phi(\dot\eta_0)\|_\PP=0$, then
Lemma \ref{degenerate vectors} implies that if $\gamma\in \pi_1(S)$, then
\begin{equation}\label{teich critical length} \left.\frac{\partial}{\partial t}\right|_{t=0} \ell_{\eta_t}(\gamma)=0.\end{equation}
However, there exist $6g-5$ elements $\{\gamma_1,\ldots,\gamma_{6g-5}\}$ of $\pi_1(S)$, so
that the mapping from $\mathcal{T}(S)$ into $\mathbb R^{6g-5}$ given by taking $\rho$ to
$(\ell_\rho(\gamma_i))_{i=1}^{6g-5}$ is a real analytic embedding (see Schmutz \cite{schmutz}).
Therefore,
since $\left.\frac{\partial}{\partial t}\right|_{t=0} \ell_{\eta_t}(\gamma_i)=0$ for all $i$, we conclude that $\dot\eta_0=0$.
Therefore, the pressure form is non-degenerate.

If $\rho,\eta\in\mathcal T(S),$ the intersection number 
$$\II(\rho,\eta)=\II(f_\rho,f_\eta)=\lim_{T\to\infty} \frac{1}{\#R_T(\rho)}\sum_{[\gamma]\in R_T(\rho)} \frac{\ell_{\sigma}(\gamma)}{\ell_{\rho}(\gamma)}$$ 
where $\ell_\rho(\gamma)$ is the translation length of $\rho(\gamma)$ and $R_T(\rho)$ is the collection of
conjugacy classes of elements of $\pi_1(S)$ whose images have translation length at most $T$.
So, $\II$ is independent of the reference metric $\rho_0$ 
and invariant by the action of  the mapping class group of $S$.
Proposition \ref{pressure form is a Hessian} states  that
$$\|\dot\eta_0\|_\PP=\left.\frac{\partial^2}{\partial t^2}\right|_{t=0}
\JJ(f_{\eta_0},f_{\eta_t}) =\left.\frac{\partial^2}{\partial t^2}\right|_{t=0}\II(f_{\eta_0},f_{\eta_t})$$
(again by fact (3)) and thus the pressure metric is mapping class group invariant.

One may interpret $\II(\rho,\eta)$ as the length in $X_\eta$ of a random unit length geodesic on $X_\rho$. So,
the pressure metric is given by considering the Hessian of the length of a random geodesic.
Since the pressure form agrees with Thurston's metric, Wolpert's work \cite{wolpert-thurston} implies that
the pressure metric is a multiple of the Weil-Petersson metric.
\end{proof}

\section{The pressure metric on the Hitchin component}

Let $V$ be a vector space and $G$ be a group. Recall that a representation
$\tau:G\to\ms{GL(V)}$ is \emph{irreducible} if $\tau(G)$ has no proper invariant subspaces other than $\{0\}.$ 
Let us begin by recalling the following well known result, see for example Humphreys \cite{humphreys}.

\begin{proposition} For each integer $d\geq2$ there exists an irreducible representation
$\tau_d: \ms{{SL}_2(\Real)}\to\ms{{SL}_d(\Real)},$ unique up to conjugation.
\end{proposition}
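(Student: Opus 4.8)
The plan is to realize $\tau_d$ concretely as the $(d-1)$-st symmetric power of the standard representation and then to read off both irreducibility and uniqueness from the representation theory of $\mathfrak{sl}_2$. Let $V$ be the space of homogeneous polynomials of degree $d-1$ in two variables $x,y$; this is $d$-dimensional, with basis $x^{d-1},x^{d-2}y,\dots,y^{d-1}$, and $\ms{SL}_2(\Real)$ acts by $(\tau_d(g)P)(v)=P(g^{-1}v)$, giving a homomorphism $\tau_d:\ms{SL}_2(\Real)\to\ms{GL}(V)\cong\ms{GL}_d(\Real)$. First I would check that the image actually lands in $\sln$. The cleanest route is via the Lie algebra: $\mathfrak{sl}_2$ is perfect, so the differential $d\tau_d$ carries it into $[\mathfrak{gl}_d,\mathfrak{gl}_d]=\mathfrak{sl}_d$, and since $\ms{SL}_2(\Real)$ is connected this upgrades to $\tau_d(\ms{SL}_2(\Real))\subset\sln$. (Equivalently, $\ms{SL}_2(\Real)$ is perfect as an abstract group, so the homomorphism $\det\circ\,\tau_d$ into the abelian group $\Real^\times$ is trivial.)

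The heart of the matter is irreducibility, and for this I would pass to the standard basis $h,e,f$ of $\mathfrak{sl}_2$, which act on $V$ as the differential operators $h=x\partial_x-y\partial_y$, $e=x\partial_y$, $f=y\partial_x$. Each monomial $x^{d-1-k}y^k$ is an eigenvector of $h$ with weight $d-1-2k$, and these $d$ weights are pairwise distinct. Consequently every subspace invariant under the diagonalizable operator $h$ is a direct sum of the one-dimensional weight spaces, so a nonzero invariant subspace must contain at least one monomial $x^{d-1-k}y^k$. Applying $e=x\partial_y$ repeatedly (which is nonzero as long as the $y$-degree is positive) and $f=y\partial_x$ repeatedly (nonzero as long as the $x$-degree is positive) then produces every other monomial in turn, so the only nonzero invariant subspace is $V$ itself. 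Because this computation is identical after tensoring with $\CC$, the representation is absolutely irreducible, and in particular irreducible over $\Real$.

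For uniqueness up to conjugation I would exploit that $\ms{SL}_2(\Real)$ is connected, so two such representations are conjugate exactly when the associated representations of $\mathfrak{sl}_2$ are isomorphic. The classical highest-weight classification of finite-dimensional irreducible $\mathfrak{sl}_2$-modules then gives precisely one isomorphism class in each dimension, namely the module of highest weight $d-1$, which is the $V$ built above. I expect this final step, rather than the explicit construction, to be where genuine care is required: one must confirm that an arbitrary $d$-dimensional \emph{real} irreducible remains irreducible after complexification, so that the complex highest-weight classification applies. This holds here because the complexified module is the unique complex irreducible of its dimension and carries an evident real structure, but it is exactly the real-versus-complex bookkeeping in this uniqueness argument that I would treat most carefully.
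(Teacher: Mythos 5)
Your proposal is correct, and for the existence half it is exactly the paper's route: the paper also realizes $\tau_d$ on the space $\ms{Sym}^d(\Real^2)$ of homogeneous polynomials of degree $d-1$ (with the harmless difference that you let $g$ act by $P\mapsto P\circ g^{-1}$ while the paper pushes the variables forward; the two conventions give isomorphic representations and the same weight picture up to signs and a swap of $e$ and $f$). The real difference is that the paper stops there --- irreducibility and uniqueness are simply attributed to Humphreys --- whereas you supply the standard $\mathfrak{sl}_2$ proof in full: the $d$ distinct $h$-weights force any nonzero invariant subspace to contain a monomial, the raising and lowering operators then sweep out all of them, and uniqueness follows from the highest-weight classification together with passage between the group and its Lie algebra via connectedness. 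The one place your writing is more compressed than the mathematics deserves is the real-versus-complex step you yourself flag: as phrased, ``the complexified module is the unique complex irreducible of its dimension'' presupposes that the complexification of an arbitrary $d$-dimensional real irreducible is irreducible, which is the point at issue. The clean way to close this is to note that every finite-dimensional complex irreducible of $\mathfrak{sl}_2(\CC)$ admits an invariant real form (namely the real symmetric power you constructed), hence is of real type; this rules out the complex and quaternionic cases in the trichotomy for $\operatorname{End}$ of a real irreducible, so the complexification of any $d$-dimensional real irreducible is the unique complex irreducible of that dimension, and Schur's lemma over $\Real$ (the intertwiner space is one-dimensional and any nonzero element is invertible) converts the isomorphism of complexifications into a real isomorphism, i.e.\ a conjugacy. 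With that sentence filled in, your argument is complete and in fact yields absolute irreducibility of $\tau_d$, which is more than the paper records at this point.
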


The existence of such an irreducible action is an explicit construction we will now explain. 
Denote by $\ms{Sym}^d(\Real^2)$ the $d$-dimensional vector space of homogeneous polynomials 
on 2 variables of degree $d-1.$ A base for $\ms{Sym}^d(\Real^2)$ is, for example, 
$$\mathcal B=\{x^{d-1},x^{d-2}y,\cdots,xy^{d-2},y^{d-1}\}.$$ 
We identify $x$ with $(1,0)$ and $y$ with $(0,1)$ in $\Real^2$ so that if 
$g=(\begin{smallmatrix} a & b \\ c & d\end{smallmatrix})\in\ms{SL}_2(\Real)$, 
then $g\cdot x=ax+cy$ and $g\cdot y=bx+dy.$ The action of $\ms{SL}_2(\Real)$ on $\ms{Sym}^d(\Real^2)$ is 
defined on the base $\mathcal B$ by 
$$\tau_d(g)\cdot x^ky^{d-k-1}=(g\cdot x)^k(g\cdot y)^{d-k-1}.$$

As before, let $S$ be a closed oriented surface of genus $g\geq2.$ 
Hitchin \cite{hitchin} studied the  components of the space 
$$\hom(\grf,\ms{PSL_d(\Real))}/\ms{PGL_d(\Real)}.$$
containing an element $\rho:\grf\to\ms{PSL}_d(\Real)$ 
that factors as $$\grf\stackrel{\rho_0}{\longrightarrow}\ms{PSL_2(\Real)}\stackrel{\tau_d}{\longrightarrow}\psln,$$ 
where $\rho_0\in\mathcal T(S).$ 

By analogy with Teichm\"uller space, he named these components {\em Teichm\"uller components}, but they are now known as
\emph{Hitchin components}, and denoted by $\mathcal H_d(S).$
Each Hitchin component contains a copy of $\mathcal T(S)$, known as   \emph{the Fuchsian locus},
which is an image of $\mathcal{T}(S)$
under the mapping induced by $\tau_d$. Hitchin  proved the following remarkable result.

\begin{theorem}[Hitchin \cite{hitchin}] Each Hitchin component $\mathcal H_d(S)$ is an analytic manifold diffeomorphic to
$\Real^{(d^2-1)(2g-2)}=\Real^{|\chi(S)|\dim\ms{PSL}_d(\Real)}.$
\end{theorem}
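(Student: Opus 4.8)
The plan is to follow Hitchin's original approach through the self-duality (Higgs bundle) equations and the nonabelian Hodge correspondence. First I would fix a complex structure $X$ on $S$, turning it into a compact Riemann surface of genus $g$ with canonical bundle $K$ of degree $2g-2$. The theorems of Hitchin, Donaldson, Corlette and Simpson then identify the space of conjugacy classes of reductive representations $\grf\to\ms{SL}_d(\mathbb C)$ with the moduli space $\mathcal M$ of polystable Higgs bundles $(E,\Phi)$ over $X$, where $\Phi\in H^0(X,\mathrm{End}_0(E)\otimes K)$; representations of Hitchin type into the split real form $\psln$ are singled out inside $\mathcal M$ by the appropriate reality condition.

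The central tool is the Hitchin fibration
$$\mathfrak h\colon\mathcal M\to B:=\bigoplus_{j=2}^d H^0(X,K^j),\qquad (E,\Phi)\mapsto\bigl(p_2(\Phi),\dots,p_d(\Phi)\bigr),$$
sending a Higgs bundle to the coefficients of the characteristic polynomial of its Higgs field. The heart of the matter is the construction of the Hitchin section $s\colon B\to\mathcal M$. Using precisely the principal $\mathfrak{sl}_2$-triple underlying the irreducible representation $\tau_d$ of the preceding proposition, one builds for each tuple $(q_2,\dots,q_d)\in B$ an explicit Higgs bundle whose Higgs field is a regular nilpotent perturbed by the differentials $q_j$. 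I would check that $\mathfrak h\circ s=\mathrm{id}_B$ and, crucially, that every Higgs bundle in the image of $s$ is stable, so that $s(B)$ consists of smooth points of $\mathcal M$ corresponding to irreducible representations. Transporting $s(B)$ through the nonabelian Hodge correspondence shows these representations take values in $\psln$, and that the origin $0\in B$ maps to the Fuchsian point $\tau_d\circ\rho_0$; since $s(B)$ is connected and meets the Fuchsian locus, it is identified with the Hitchin component $\mathcal H_d(S)$.

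Granting this, the dimension count is a routine application of Riemann--Roch. For $j\ge2$ the line bundle $K^{1-j}$ has negative degree, so Serre duality gives $H^1(X,K^j)=0$ and hence $\dim_{\mathbb C}H^0(X,K^j)=(2j-1)(g-1)$. Summing over $j=2,\dots,d$ and using $\sum_{j=1}^d(2j-1)=d^2$ yields $\dim_{\mathbb C}B=(d^2-1)(g-1)$, so that $B$ is a real vector space of dimension $(d^2-1)(2g-2)=|\chi(S)|\dim\psln$. The section $s$ then provides a real-analytic diffeomorphism from $B\cong\mathbb R^{(d^2-1)(2g-2)}$ onto $\mathcal H_d(S)$, as claimed.

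I expect the main obstacle to be the two linked analytic facts that make the Hitchin section work: proving that the Higgs bundles $s(q_2,\dots,q_d)$ are stable for all values of the parameters, not merely near the Fuchsian point, and establishing that the resulting representations genuinely lie in $\psln$ and sweep out an entire connected component. Stability requires Hitchin's careful analysis of the $\Phi$-invariant subbundles compatible with the principal grading, while the identification of the full component rests on the nonabelian Hodge correspondence being a homeomorphism together with a properness argument showing that $s(B)$ is closed as well as open. By contrast, the Riemann--Roch bookkeeping and the final diffeomorphism claim are entirely mechanical once the section is in hand.
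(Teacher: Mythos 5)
The paper offers no proof of this theorem---it is quoted directly from Hitchin's paper---and your proposal is a faithful reconstruction of Hitchin's original argument via the Higgs-bundle moduli space, the Hitchin fibration, and the section built from the principal $\mathfrak{sl}_2$-triple, with the Riemann--Roch dimension count $\dim_{\mathbb C}H^0(X,K^j)=(2j-1)(g-1)$ carried out correctly. You also correctly isolate the genuinely hard points (stability of every Higgs bundle in the image of the section, reality of the associated representations, and the identification of $s(B)$ with an entire connected component), so there is nothing to add.
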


Hitchin \cite{hitchin} commented that 
{\em ``Unfortunately, the analytical point of view used for the proofs 
gives no indication of the geometrical significance of the Teichm\"uller component.''}
Labourie \cite{labourie-anosov} introduced dynamical techniques to show that Hitchin representations,
i.e. representations in the Hitchin component, are geometrically meaningful. In particular, Hitchin
representations are discrete, faithful, quasi-isometric embeddings. Labourie's work significantly expanded
the analogy between Hitchin components and Teichm\"uller spaces.

We view the following result as a further step in exploring this analogy. Its proof follows the same basic strategy as in the 
Teichm\"uller space setting, although there are several additional difficulties to overcome.

\begin{theorem}
\label{hitchin metric}
{\rm (Bridgeman-Canary-Labourie-Sambarino \cite{BCLS})}
There exists an analytic Riemannian metric on $\mathcal H_d(S)$ which is invariant under the action of the mapping class group 
and restricts to a multiple of the Weil-Petersson metric on the Fuchsian locus.
\end{theorem}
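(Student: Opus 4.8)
The plan is to run the four-step strategy of Section~3 in this setting, the one genuinely new feature being that Fact~(3) of Section~\ref{basic facts} fails---the entropy is no longer constant---so the entropy renormalization, inert in the Teichm\"uller case, becomes essential and resurfaces as the heart of the non-degeneracy argument.

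For Step~1, I would use Labourie's theorem \cite{labourie-anosov} that every Hitchin representation $\rho$ is Anosov, hence carries H\"older-continuous, transverse, equivariant limit maps from $\bgrf$ into $\P(\Real^d)$ and into its dual. Following Sambarino \cite{sambarino-quantitative}, the highest-weight (Busemann) cocycle built from these limit maps produces a positive H\"older reparametrization function $f_\rho:\TT^1S\to(0,\infty)$ whose period over the orbit of $\gamma$ is the logarithm of the spectral radius, $\int_{[\gamma]}f_\rho=\log\lambda_1(\rho(\gamma))$. Exactly as in Proposition~\ref{proposition:function}, post-composing the identification of $\TT^1\H^2$ with boundary triples by the limit maps yields a H\"older orbit equivalence of $\phi^{f_\rho}$ with the fixed geodesic flow $\phi=\phi^{\rho_0}$, so $\phi^{f_\rho}$ is a topologically transitive metric Anosov flow, and analytic dependence of $f_\rho$ on $\rho$ follows from the structural-stability analyticity of the limit maps (Hirsch--Pugh--Shub \cite{hirsch-pugh-shub}) invoked after Proposition~\ref{morse}. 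For Steps~2--3, since $h(\rho):=h(f_\rho)$ is now non-constant, I would define it through Lemma~\ref{entropia2} as the unique number with $\PP(-h(\rho)f_\rho)=0$; analyticity of the pressure function and the Implicit Function Theorem make $h(\rho)$, and hence the thermodynamic map $\Phi(\rho)=-h(\rho)f_\rho$, analytic into $\mathcal P(\TT^1S)$ (locally into some $\mathcal P^\alpha$). Pulling back the pressure semi-norm defines the pressure form, and Proposition~\ref{pressure form is a Hessian} identifies it with the Hessian of $\eta\mapsto\JJ(f_\rho,f_\eta)$, which by Proposition~\ref{renormalized rigidity} attains its minimum value $1$ at $\eta=\rho$.

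Step~4, non-degeneracy, is where I expect the real work to lie. By Corollary~\ref{degenerate vectors}, a tangent vector $\dot\rho_0$ lies in the kernel of the pressure form precisely when
$$\left.\frac{\partial}{\partial t}\right|_{t=0}\big(h(\rho_t)\,\log\lambda_1(\rho_t(\gamma))\big)=0\qquad\text{for every }\gamma.$$
Differentiating and eliminating $\dot h$ between any two elements forces $\left.\frac{\partial}{\partial t}\right|_{0}\log\lambda_1(\rho_t(\gamma))=c\,\log\lambda_1(\rho_0(\gamma))$ with a single constant $c=-\dot h/h$ for all $\gamma$; that is, every degenerate vector is an infinitesimal scaling of the spectral-radius length spectrum. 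In the Teichm\"uller case one has $c=0$ for free (entropy is constant), and Schmutz's embedding of $\mathcal T(S)$ by finitely many length functions \cite{schmutz} then gives $\dot\rho_0=0$. Here neither input is available directly: I would need a higher-rank analogue of Schmutz, that finitely many spectral-radius functions locally analytically embed $\mathcal H_d(S)$ so that $c=0$ already implies $\dot\rho_0=0$, together with an argument excluding the scaling directions $c\neq0$---equivalently, showing that no nontrivial deformation makes every ratio $\log\lambda_1(\rho_t\gamma)/\log\lambda_1(\rho_t\delta)$ first-order constant. The latter, an infinitesimal spectral rigidity of Hitchin representations modulo scaling, is the main obstacle; it is exactly the point where the variable entropy blocks a direct reduction to the classical length-function embedding and where genuinely new input---analyticity together with the fine structure of the Anosov limit maps---must enter.

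The remaining assertions are comparatively soft. Mapping-class-group invariance is formal, since the periods $\log\lambda_1(\rho(\gamma))$, and hence $\II$, $\JJ$, and the pulled-back form, depend only on the marked spectral-radius spectrum, which the mapping class group merely permutes, just as in the Teichm\"uller theorem. For the Fuchsian locus, a direct eigenvalue computation for the principal representation $\tau_d$ gives $\log\lambda_1(\tau_d(\rho_0(\gamma)))=\tfrac{d-1}{2}\,\ell_{\rho_0}(\gamma)$, so along the locus $f_\rho$ is the constant multiple $\tfrac{d-1}{2}$ of the Teichm\"uller reparametrization function of Proposition~\ref{proposition:function}; since $\JJ(\lambda f,\lambda g)=\JJ(f,g)$, the restricted Hessian coincides with the Teichm\"uller pressure form, which by Section~\ref{teich-pressure} is a constant multiple of the Weil--Petersson metric.
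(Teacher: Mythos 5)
Your Steps 1--3 track the paper's argument closely and are essentially correct, as are the mapping class group invariance and the Fuchsian-locus computation (the eigenvalue identity $\log\Lambda_\gamma(\tau_d\circ\rho)=\tfrac{d-1}{2}\ell_\rho(\gamma)$ together with the scale-invariance of $\JJ$ is exactly how one reduces to Wolpert's theorem). The gap is in Step 4, and you have in effect flagged it yourself: after correctly reducing to the statement that a degenerate vector $v\in\TT_\rho\mathcal H_d(S)$ satisfies $D_\rho\log\Lambda_\gamma(v)=K\log\Lambda_\gamma(\rho)$ for a single constant $K$ and all $\gamma$, you write that you ``would need'' an argument excluding $K\neq0$ and a higher-rank substitute for Schmutz's embedding, but you supply neither. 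Those two points are precisely Proposition \ref{K=0 Hitchin} and Lemma \ref{traces generate}, and they are the real content of the non-degeneracy proof; without them the theorem is not established.

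The paper's resolution of the first point requires no new rigidity input beyond the linear algebra of proximal elements. Fixing coprime $\alpha,\beta$, one considers the function
$F_n(\rho)= \tr\big(\p_1(\rho(\alpha)) \rho(\beta^n)\big)\big/\lambda_1\big(\rho(\beta^n)\big)\tr\big(\p_1(\rho(\alpha))\p_1(\rho(\beta))\big)$,
which by Lemma \ref{typk} is built out of limits of ratios of spectral radii of the elements $\alpha^n\beta^n$, $\alpha^n\beta$, $\alpha^n$, $\beta^n$, and hence has log-type $K$ at $v$. Expanding $\rho(\beta^n)=\sum_k\lambda_k(\rho(\beta))^n\p_k(\rho(\beta))$ and comparing growth rates of the two sides via the elementary Lemma \ref{technical} forces $D_\rho\big(\lambda_k/\lambda_1\big)(v)=0$ for every $k$; that is, \emph{all} eigenvalues of $\rho(\beta)$, not just the top one, have logarithmic derivative equal to $K\log|\lambda_1(\rho(\beta))|$ in the direction $v$. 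Applying this with $\beta^{-1}$ in place of $\beta$ and using $\lambda_d(\rho(\beta))=1/\lambda_1(\rho(\beta^{-1}))$ yields $K\log\Lambda_{\beta^{-1}}(\rho)=-K\log\Lambda_\beta(\rho)$, whence $K=0$ since both logarithms are positive; the same computation then gives $D_\rho\lambda_{k,\beta}(v)=0$ for all $k$ and hence $D_\rho\tr_\beta(v)=0$ for every $\beta$. For the second point, the substitute for Schmutz is not a length-function embedding but Lemma \ref{traces generate}: since Hitchin representations are irreducible, Schur's lemma and the standard structure theory of the $\ms{SL}_d(\C)$-character variety (Lubotzky--Magid) show that the differentials $\{D_\rho\tr_\gamma\}$ span $\TT^*_\rho\mathcal H_d(S)$, so $v=0$. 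Your reduction is the right one, but the proposal stops exactly where the proof of the theorem begins.
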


\noindent
{\bf Remark:}
\begin{itemize}\item The mapping class group, regarded as a subgroup of ${\rm Out}(\pi_1(S)),$ acts by 
precomposition on $\mathcal H_d(S)$.

\item When $d=3$ metrics have also been constructed by Darvishzadeh-Goldman \cite{DG} and Qiongling Li \cite{li}.
Li \cite{li} showed that both her metric and the metric constructed by Darvishzadeh and Goldman have the properties
obtained in our result.
\end{itemize}

\subsection{Labourie's work} 
Labourie developed the theory of Anosov representations as a tool to study Hitchin representations. This theory
was further developed by Guichard and Wienhard \cite{guichard-wienhard} and 
has played a central role in the subsequent development of higher Teichm\"uller theory.
The following theorem summarizes some of the
major consequences of Labourie's work for Hitchin representations.

\begin{theorem}
\label{hitchin anosov}
{\em (Labourie \cite{labourie-anosov,labourie-energy})}
If $\rho\in \mathcal H_d(S)$  then
\begin{enumerate}
\item
$\rho$ is discrete and faithful,
\item
If $\gamma\in \pi_1(S)$ is non-trivial, then $\rho(\gamma)$ is diagonalizable over $\mathbb R$ with distinct eigenvalues.
\item
$\rho$ is a quasi-isometric embedding. 
\item
$\rho$ is irreducible.
\end{enumerate}
\end{theorem}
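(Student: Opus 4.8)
The unifying idea is that every Hitchin representation is \emph{Anosov} in Labourie's sense: it admits a continuous, $\rho$-equivariant limit map $\xi:\bgrf\to\mathrm{Flag}(\Real^d)$ into the space of complete flags which is a \emph{hyperconvex Frenet curve}, and the associated bundle over the geodesic flow of $S$ satisfies a uniform contraction property. Once this structure is in place, all four conclusions follow. So the plan is first to establish the Anosov/Frenet structure throughout $\mathcal H_d(S)$, and then to read off (1)--(4) from the regularity of $\xi$ and the dynamics.

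To produce the limit map everywhere I would argue by connectedness, exploiting that $\mathcal H_d(S)$ is connected (Hitchin's theorem \cite{hitchin}) and contains the Fuchsian locus. At a Fuchsian base point the representation factors as $\tau_d\circ\rho_0$ with $\rho_0\in\mathcal T(S)$, and the limit map is the Veronese curve $\P(\Real^2)\to\mathrm{Flag}(\Real^d)$ obtained by applying $\tau_d$ to the boundary map of $\rho_0$; one checks directly that this curve is hyperconvex Frenet and that the contraction property holds. I would then show that the set of representations carrying such a structure is both open and closed in $\mathcal H_d(S)$. Openness is structural stability of the Anosov condition: a cone-field and graph-transform argument over the (Anosov) geodesic flow shows that the contraction property, and with it the limit map, persists under small perturbations and varies continuously. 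Closedness uses compactness together with the uniform contraction estimates: these prevent the limit maps from degenerating, so the osculating-flag and direct-sum conditions pass to the limit and the limiting curve is again hyperconvex Frenet. Being open, closed and nonempty in a connected space, the Anosov locus is all of $\mathcal H_d(S)$.

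With the curve $\xi$ in hand the four assertions are consequences of its regularity and equivariance. Faithfulness is immediate: $\xi$ is injective by hyperconvexity, so no nontrivial $\g$ can act trivially on $\bgrf$ while fixing the flags $\xi$ assigns. For (2), a nontrivial $\g$ acts on $\bgrf$ with an attracting and a repelling fixed point; by equivariance $\xi$ sends these to a transverse pair of complete flags, which must be the attracting and repelling flags of $\rho(\g)$. Transversality of full flags forces $\rho(\g)$ to be proximal in every exterior power $\Lambda^k\Real^d$, i.e.\ $\rho(\g)$ has $d$ eigenvalues of distinct moduli; since the flags are real these eigenvalues are real, so $\rho(\g)$ is $\Real$-diagonalizable with distinct eigenvalues. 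For (4), hyperconvexity means the lines $\xi^1$ assigns to points in general position span $\Real^d$; a nonzero proper $\rho$-invariant subspace would be incompatible with this spanning family of eigenlines, so $\rho$ is irreducible. Finally (3), the quasi-isometric embedding, is a general feature of Anosov representations: the uniform contraction yields a comparison between the word length of $\g$ and the Cartan projection (singular values) of $\rho(\g)$, which gives a two-sided estimate and in particular discreteness, completing (1).

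The technical heart, and the step I expect to be the main obstacle, is the open-and-closed argument of the second paragraph. Openness requires the genuine structural-stability machinery for Anosov flows, namely invariant cone fields and the contraction estimate that produces the perturbed limit map; closedness requires controlling the osculating conditions of Frenet curves under limits without losing transversality. Both are where Labourie's dynamical input is essential. By contrast, once the hyperconvex Frenet curve is available, the deductions (1)--(4) are comparatively formal.
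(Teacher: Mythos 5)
The paper offers no proof of this statement: it is quoted as a summary of Labourie's results with a citation to \cite{labourie-anosov,labourie-energy}, so there is no internal argument to compare against. Your outline is, in substance, Labourie's actual strategy: exhibit the hyperconvex Frenet curve on the Fuchsian locus via the Veronese embedding, show that the locus of representations admitting such a curve is open (structural stability of the Anosov contraction) and closed (uniform contraction estimates preventing degeneration of the limit curve) in the connected component $\mathcal H_d(S)$, and then read off (1)--(4). You also correctly identify the open-and-closed step, especially closedness, as the technical heart; in Labourie's paper that is where most of the work lies.

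Two of your deductions are too quick as stated. For (2), transversality of the two $\rho(\g)$-invariant flags $\xi(\g_+)$ and $\xi(\g_-)$ does \emph{not} by itself force distinct eigenvalue moduli (a diagonal matrix with repeated eigenvalues preserves many transverse pairs of complete flags); what produces the gaps $|\lambda_k|>|\lambda_{k+1}|$ is the uniform contraction of the associated flow restricted to the closed orbit corresponding to $\g$, which converts the exponential contraction rate into an eigenvalue ratio. Once proximality in each exterior power is established this way, realness does follow as you say, since each $\lambda_1\cdots\lambda_k$ is the simple real dominant eigenvalue of $\Lambda^k\rho(\g)$. For (4), you should add the intermediate step: a nonzero invariant subspace $W$ must contain some line $\xi^{(1)}(x)$ (e.g.\ because $\rho(\g^n)v$, suitably normalized, converges to $\xi^{(1)}(\g_+)$ for generic $v\in W$), and only then does hyperconvexity of the family $\{\xi^{(1)}(x)\}$, together with equivariance and minimality of the boundary action, force $W=\Real^d$.
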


Theorem \ref{hitchin anosov} is based on Labourie's proof that Hitchin representations are Anosov with respect to a minimal
parabolic subgroup for $\psln$, i.e. the upper triangular matrices in $\psln$. We will develop the terminology necessary to
give a definition.

A \emph{complete flag} of $\Real^d$ is a sequence of vector subspaces $\{V_i\}_{i=1}^d$ such that  $V_i\subset V_{i+1}$ and 
$\dim V_i=i$ for all $i=1,\ldots, d$. Two flags $\{V_i\}$ and $\{W_i\}$ are \emph{transverse} if  $V_i\cap W_{d-i}=\{0\}$ for all $i$. 
Denote by $\mathscr F$ the space of complete flags and by $\mathscr F^{(2)}$ the space of pairs of transverse flags. The following result
should be viewed as the analogue of the limit map constructed in Proposition \ref{morse}.

\begin{theorem}[Labourie \cite{labourie-anosov}]\label{equivariant} 
If $\rho\in\mathcal H_d(S)$, then there exists a unique $\rho$-equivariant H\"older  map $\xi_\rho:\bgrf\to\mathscr F$ such that, if $x\neq y$,
then the flags $\xi_\rho(x)$ and $\xi_\rho(y)$ are transverse.
\end{theorem}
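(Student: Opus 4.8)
The plan is to follow the template of Proposition \ref{morse}: pin the map down on a dense set by a dynamical argument to get uniqueness, and then obtain existence by writing the map explicitly on the Fuchsian locus and propagating it across $\mathcal H_d(S)$ using openness of the Anosov condition together with the connectedness of the Hitchin component. For uniqueness I would argue exactly as in the Fuchsian case. By Theorem \ref{hitchin anosov}(2), every non-trivial $\rho(\gamma)$ is $\Real$-diagonalizable with distinct eigenvalues, which may be ordered by decreasing modulus; hence $\rho(\gamma)$ has a well-defined attracting flag $\xi^+(\gamma)\in\mathscr F$, spanned by the top eigenspaces, and a transverse repelling flag. If $\gamma^+\in\bgrf$ is the attracting fixed point of $\gamma$ and $x\neq\gamma^-$, then $\gamma^n x\to\gamma^+$, so for a continuous equivariant $\xi_\rho$ we get $\rho(\gamma)^n\xi_\rho(x)\to\xi_\rho(\gamma^+)$; transversality of $\xi_\rho(x)$ to the repelling flag of $\rho(\gamma)$ forces the left-hand side to converge to $\xi^+(\gamma)$. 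Thus $\xi_\rho(\gamma^+)=\xi^+(\gamma)$ is forced, and since $\{\gamma^+:\gamma\in\grf\}$ is dense in $\bgrf$, continuity determines $\xi_\rho$ completely.

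For existence I would first treat the Fuchsian locus $\rho=\tau_d\circ\rho_0$ with $\rho_0\in\mathcal T(S)$, where the map is explicit. Compose the Fuchsian boundary map $\bgrf\cong\partial_\infty\H^2=\P(\Real^2)$ of Proposition \ref{morse} with the osculating-flag (Frenet) curve of the Veronese embedding $\P(\Real^2)\hookrightarrow\P(\ms{Sym}^d(\Real^2))$, which sends a line $\ell=[v]$ to the flag whose $i$-th subspace is the span of $v^{d-1},v^{d-2}w,\dots,v^{d-i}w^{i-1}$ for a complementary $w$, i.e. the successive osculating spaces of the rational normal curve. This map is analytic and $\tau_d$-equivariant, hence $\rho$-equivariant, and the flags at distinct points are in general position because the relevant determinants are Vandermonde, so transversality holds there. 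To reach an arbitrary $\rho$ I would invoke the dynamical content behind Theorem \ref{hitchin anosov}: the geodesic flow on $\TT^1S$ lifts to a flow on the flat flag bundle $E_\rho=(\TT^1\H^2\times\mathscr F)/\grf$, where $\grf$ acts by deck transformations on the first factor and through $\rho$ on the second, and the fibrewise attracting and repelling flags of this flow recover $\xi_\rho$. This Anosov property is open in $\rho$ (structural stability), so since $\mathcal H_d(S)$ is connected (Hitchin \cite{hitchin}), a transverse equivariant limit map exists along any path emanating from the Fuchsian locus \emph{provided} the condition is also closed.

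The hard part is exactly this closedness: one must prevent $\xi_{\rho_n}(x)$ and $\xi_{\rho_n}(y)$ from degenerating to a non-transverse pair in a limit, that is, one must control the map uniformly along a non-compact path in $\mathcal H_d(S)$. This is where Labourie's \cite{labourie-anosov} analysis is essential. Using that the representations remain discrete, faithful quasi-isometric embeddings (Theorem \ref{hitchin anosov}(1),(3)), one establishes uniform exponential contraction estimates along the geodesic flow on $E_\rho$ (his hyperconvexity/``Frenet'' property), and these estimates pass to limits, forcing the limiting equivariant curve to again be a transverse Frenet curve. Equivalently, in the modern formulation one propagates uniform singular-value-gap estimates and then reads off the H\"older regularity and transversality of $\xi_\rho$ from a Morse-type lemma for regular quasigeodesics in the symmetric space. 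Assembling these uniform estimates, rather than the soft uniqueness and openness steps, is the genuine obstacle.
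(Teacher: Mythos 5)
First, a point of reference: the paper does not prove Theorem \ref{equivariant} at all --- it is imported from Labourie \cite{labourie-anosov} and used as a black box, so there is no in-paper argument to compare yours against. Your outline does track the structure of Labourie's actual proof (explicit Veronese/Frenet curve on the Fuchsian locus, then an open--closed argument on the connected component $\mathcal H_d(S)$), but as written it has two gaps. The smaller one is in the uniqueness step: you deduce $\rho(\gamma)^n\xi_\rho(x)\to\xi^+(\gamma)$ from ``transversality of $\xi_\rho(x)$ to the repelling flag of $\rho(\gamma)$,'' but the hypothesis only gives transversality of $\xi_\rho(x)$ to $\xi_\rho(\gamma^-)$, and you have not yet shown that $\xi_\rho(\gamma^-)$ \emph{is} the repelling flag --- that is the very statement you are proving, applied to $\gamma^{-1}$. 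Equivariance plus transversality only force $\xi_\rho(\gamma^+)$ and $\xi_\rho(\gamma^-)$ to be a transverse pair of flags assembled from eigenlines of $\rho(\gamma)$, and there are many such pairs besides the attracting/repelling one. The argument is repairable (apply both forward and backward iteration to $\xi_\rho^{(k)}(x)$ for a third point $x$ and every $k$, and use transversality of $\xi_\rho(x)$ with \emph{both} $\xi_\rho(\gamma^+)$ and $\xi_\rho(\gamma^-)$ to pin the eigenline indices down to $\{1,\dots,k\}$), but as stated it is circular.

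The more serious issue is that the existence half is not a proof. You correctly identify that openness of the Anosov/Frenet condition is soft, that the Fuchsian computation is an elementary Vandermonde check, and that everything hinges on closedness of the transverse equivariant limit-curve property along non-compact paths in $\mathcal H_d(S)$ --- and then you attribute precisely that step to ``Labourie's uniform exponential contraction estimates'' without establishing them. Since that closedness statement is essentially the entire content of the theorem, the proposal reduces the theorem to itself; if the intent is to cite Labourie for that step, the proof collapses to the citation the paper already gives. To make this a genuine proof you would need to produce the uniform estimates (uniform hyperconvexity of the limit curves, or uniform singular-value gaps along quasigeodesics, uniform over the path of representations) and show they persist in the limit, which is the hard analytic core of \cite{labourie-anosov}.
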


If $\rho\in\mathcal H_d(S)$ and $x\in\bgrf$ then we denote by $\xi^{(k)}_\rho(x)$ the $k$-th dimensional space in the flag $\xi_\rho(x).$ Notice that if $\gamma_+$ is an attracting fixed point of the action of $\gamma\in\grf$ on $\bgrf$, 
then $\xi_\rho^{(k)}(\gamma_+)$ is spanned by the eigenlines of $\rho(\gamma)$ associated to the $k$ eigenvalues of largest modulus.
In particular, $\xi^{(1)}_\rho(\gamma_+)$ is the attracting fixed point for the action of $\rho(\gamma)$ on $\mathbb P(\Real^d)$ and 
$\xi_\rho^{(d-1)}(\g_-)$ is its repelling hyperplane (where $\gamma^-$ is the repelling fixed point for the action of $\g$ on $\grf$).

When $\rho$ is Fuchsian, Labourie's map is an explicit construction, called the \emph{Veronese embedding}, 
which is moreover $\tau_d$-equivariant. This is a map from $\partial_\infty\H^2=\P(\Real^2)\to\mathscr F$ explicitly defined, identifying $\Real^d$ with $\ms{Sym}^d(\Real^2),$ by $$\Real\cdot (ax+by)\mapsto \{p\in\ms{Sym}^d(\Real^2) : p\textrm{ has }(ax+by)^{d-k}\textrm{ as a factor}\}_{k=1}^d.$$

\medskip\noindent
{\bf Conventions:} As in the previous section, we fix $\rho_0\in\mathcal{T}(S)$, so that 
$\rho_0$ identifies $S$ with $X_{\rho_0}$, and hence identifies
$\bgrf$ with $\partial_\infty\mathbb H^2$ and $\TT^1S$ with $\TT^1X_{\rho_0}$. Let $\phi=\phi^{\rho_0}$ be the geodesic flow on $S$.
Let
$$\bgrf^{(2)}=\{(x,y)\in\bgrf^2:x\neq y\}$$ 
and consider the Hopf parametrization of $\TT^1\mathbb H^2$ by $\bgrf^{(2)}\times\mathbb R$
where $(x,y,t)$ is the point on the geodesic $L$ joining $x$ to $y$ which is a (signed) distance $t$ from
the horocycle through $y$ and a fixed basepoint for $\mathbb H^2$.

\medskip

Labourie considers the bundle $E_\rho$ over $\TT^1S$ which is the quotient of 
\hbox{$\TT^1\mathbb H^2\times \mathscr F$}
by $\pi_1(S)$ where $\gamma\in\pi_1(S)$ acts on $\TT^1\mathbb H^2$ by $\rho_0(\gamma)$ and acts on $\mathscr F$ by
$\rho(\gamma)$. 
There is a flow $\tilde\psi^\rho$ on $\TT^1\mathbb H^2\times \mathscr F$
which acts by the geodesic flow on $\TT^1\mathbb H^2$ and acts trivially on $\mathscr F$. 
The flow $\tilde\psi^\rho$ descends to a flow $\psi^\rho$ on $E_\rho$. 
The limit map $\xi_\rho:\bgrf\to\mathscr F$
determines a section $\tilde\sigma_\rho:\TT^1\mathbb H^2\to \tilde E_\rho$ given by $\tilde\sigma(x,y,t)=((x,y,t),\xi_\rho(x))$ which
descends to a section $\sigma:\TT^1S\to E_\rho$.

A representation $\rho:\pi_1(S)\to\psln$ is {\em Anosov with respect to a
minimal parabolic subgroup} if and only if there is a limit map with the properties in Theorem \ref{equivariant}
such that the inverse  of the associated flow
$\psi^\rho$ is contracting on $\sigma_\rho(\TT^1S)$.

\subsection{The geodesic flow of a Hitchin representation}
\label{geodesic flow}

We wish to associate a topologically transitive metric Anosov flow to each Hitchin representation.
Since $\rho$ is discrete and faithful, one is tempted to  consider the geodesic flow of the associated locally symmetric space 
$$N_\rho=\rho(\grf)\backslash\psln/\ms{PSO}(d).$$ However, $N_\rho$ is neither closed, nor negatively curved, so its geodesic flow will not be Anosov. 
Moreover, this flow does not even have a nice compact invariant set where it is metric Anosov (see Sambarino \cite[Prop.  3.5]{sambarino-orbital}).

Sambarino \cite[\S 5]{sambarino-quantitative} (or more specifically \cite[Thm 3.2, Cor. 5.3 and Prop. 5.4]{sambarino-quantitative}) constructed metric Anosov flows associated to Hitchin representations which are H\"older orbit equivalent to a geodesic flow on a hyperbolic surface such that the closed orbit associated to $\g\in\grf$ has period $\log\Lambda_\gamma(\rho),$ where $\Lambda_\gamma(\rho)$ is the spectral radius of $\rho(\gamma)$, i.e.  the largest modulus of the eigenvalues of $\rho(\gamma)$. 

We will
use these flows to construct a thermodynamic mapping and an associated pressure metric satisfying the conclusions of Theorem \ref{hitchin metric}.

\begin{proposition}[{Sambarino \cite[\S 5]{sambarino-quantitative}}]
\label{functionHitchin} 
For every $\rho\in\mathcal H_d(S)$, there exists a positive H\"older function $f_\rho:\TT^1S\to(0,\infty)$ such that 
$$\int_{[\g]}f_\rho=\log\Lambda_\g(\rho)$$
for every $\g\in\grf$.
\end{proposition}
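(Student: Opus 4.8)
The plan is to mimic the construction of Proposition \ref{proposition:function}, replacing the hyperbolic cross-ratio by the logarithmic stretching of the top eigenline bundle furnished by Labourie's limit map $\xi_\rho$ (Theorem \ref{equivariant}). First I would fix a $\rho$-equivariant map $F:\H^2\to\sln/\ms{SO}(d)$, lifting $\rho$ to $\sln$ if necessary; such a map exists because the target is contractible and $\rho$ is a quasi-isometric embedding by Theorem \ref{hitchin anosov}. A point of this symmetric space is the same datum as an inner product on $\Real^d$, so $F$ assigns to each $p\in\H^2$ a genuine norm $\|\cdot\|_{F(p)}$ on $\Real^d$. I take $F$ smooth, so that the only Hölder (rather than smooth) ingredient entering the construction will be $\xi_\rho$ itself.

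Working on $\TT^1\H^2\cong\bgrf^{(2)}\times\Real$ in the Hopf parametrization, with $(x,y,t)$ the point at signed distance $t$ on the oriented geodesic from $x$ to $y$, I define the additive cocycle
$$\kappa_\rho\big((x,y,t),s\big)=-\log\frac{\|e\|_{F(\phi^{\rho_0}_s(x,y,t))}}{\|e\|_{F(x,y,t)}},$$
where $e$ is any nonzero vector of the top line $\xi^{(1)}_\rho(y)$. Since $\log\|ce\|$ and $\log\|e\|$ differ by the constant $\log|c|$, this is independent of the choice of $e$, and because $\g$ acts by $\rho_0(\g)$ on the base, by $\rho(\g)$ on $\Real^d$, and $\xi_\rho$ is $\rho$-equivariant, the cocycle descends to $\TT^1 S$. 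As in the Teichm\"uller case I would average over unit intervals, setting $\kappa_\rho^1((x,y,t),s)=\int_0^1\kappa_\rho((x,y,t),s+r)\,dr$ and $f_\rho=\left.\frac{\partial}{\partial s}\right|_{s=0}\kappa_\rho^1$, which guarantees that $f_\rho$ is H\"older and $C^1$ along the flow.

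Next I would compute the periods. Let $\g\in\grf$ be nontrivial with attracting fixed point $\g_+$, let $p$ lie on its axis, and let $\ell=\ell_{\rho_0}(\g)$ be the translation length, so that $\phi^{\rho_0}_\ell p=\rho_0(\g)p$ and the axis projects to the closed orbit $[\g]$. Integrating $f_\rho$ over $[\g]$ telescopes the cocycle, and the unit averaging preserves periods, so $\int_{[\g]}f_\rho=\kappa_\rho(v_0,\ell)=-\log\big(\|e\|_{F(\rho_0(\g)p)}/\|e\|_{F(p)}\big)$ for $e\in\xi^{(1)}_\rho(\g_+)$. The equivariance $F(\rho_0(\g)p)=\rho(\g)F(p)$ rewrites this as $-\log\big(\|\rho(\g)^{-1}e\|_{F(p)}/\|e\|_{F(p)}\big)$. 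Since $\g_+$ is attracting, $\xi^{(1)}_\rho(\g_+)$ is exactly the top eigenline of $\rho(\g)$ (as noted after Theorem \ref{equivariant}), whence $\rho(\g)^{-1}e=\Lambda_\g(\rho)^{-1}e$ and the integral equals $\log\Lambda_\g(\rho)$, as required.

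The hard part will be positivity: the computation above only forces the \emph{integral} of $f_\rho$ over each closed orbit to be positive, whereas the statement demands $f_\rho>0$ pointwise. This is precisely where Labourie's Anosov property must enter. The projective-Anosov (domination) condition provides a uniform gap in the singular values of the cocycle along the flow, forcing $\|e\|_{F(\phi^{\rho_0}_s p)}$ to decay uniformly as $s\to+\infty$ for $e$ in the top line; the unit averaging then upgrades this to the pointwise inequality $f_\rho>0$, for a suitable choice of $F$ and after passing to a H\"older representative. Once positivity and H\"older regularity are secured, the existence may also be packaged through Lemma \ref{time-conjugacy}, since the resulting reparametrized flow $\phi^{f_\rho}$ is a metric Anosov flow H\"older orbit equivalent to the geodesic flow $\phi^{\rho_0}$.
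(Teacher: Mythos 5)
Your construction is correct in outline and takes a genuinely different route from the paper's. The paper does not build $f_\rho$ by a direct cocycle on $\TT^1S$: it first constructs an abstract flow space $\ms U_\rho$ as the quotient of a line bundle over $\bgrf^{(2)}$, proves in Proposition~\ref{flow orbit equivalent} that $\pi_1(S)$ acts properly discontinuously and cocompactly and that the quotient flow is H\"older orbit equivalent to $\phi$ with periods $\log\Lambda_\g(\rho)$, and then deduces Proposition~\ref{functionHitchin} immediately from Lemma~\ref{time-conjugacy}, which hands you a \emph{positive} H\"older reparametrization function by its very statement. You instead transplant the cross-ratio construction of Proposition~\ref{proposition:function}, measuring the logarithmic contraction of the line $\xi^{(1)}_\rho$ in an equivariant metric; your period computation (telescoping the cocycle along the axis, equivariance of $F$, and the identification of $\xi^{(1)}_\rho(\g_+)$ with the top eigenline) is correct, and the averaged derivative is H\"older because $\xi_\rho$ is. What your route buys is an explicit formula for $f_\rho$ on the nose, at the cost of having to prove positivity by hand; what the paper's route buys is that positivity is automatic, at the cost of first establishing proper discontinuity and the orbit equivalence. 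Both ultimately rest on the same ingredient: the uniformly contracting metric on the line sub-bundle $\Sigma$ produced by averaging in Lemma~\ref{contracting metric}.

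The one genuine soft spot is exactly the step you flag, and your proposed mechanism for it does not work as stated. The Anosov property gives contraction on $\Sigma$ only in an eventual, uniform-for-large-time sense; for an arbitrary smooth equivariant $F:\H^2\to\sln/\ms{SO}(d)$ there is no reason that $\|e\|_{F(\phi_s p)}$ decreases over \emph{every} unit of flow time, so $\kappa_\rho(\cdot,1)$, hence your $f_\rho$, can be negative somewhere, and averaging over intervals of length $1$ does not repair this. You must either average over intervals of length $T$ with $T$ larger than the ``burn-in'' time of the contraction --- which is precisely how the adapted H\"older metric $\tau$ of Lemma~\ref{contracting metric} is manufactured, and using $\tau$ in place of $F$ makes your construction coincide with the paper's $f_\rho$ --- or keep your $f_\rho$ as is and invoke the positive Liv\v sic theorem: since $\ell_{f_\rho}(\g)=\log\Lambda_\g(\rho)\geq c\,\ell_{\rho_0}(\g)$ for some $c>0$ (Hitchin representations are quasi-isometric embeddings, Theorem~\ref{hitchin anosov}), $f_\rho$ is Liv\v sic cohomologous to a positive H\"older function. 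Note also that your closing appeal to Lemma~\ref{time-conjugacy} is circular as written: the flow $\phi^{f_\rho}$ is only defined once $f_\rho>0$, so that lemma can only be applied if one has an independently constructed target flow, which is exactly what Proposition~\ref{flow orbit equivalent} supplies in the paper.
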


Notice that $\phi^{f_\rho}$ is a topologically transitive, metric Anosov flow,
H\"older orbit equivalent to the geodesic flow, whose periods are the logarithms of the spectral radii of $\rho(\grf)$. We call this flow
the {\em geodesic flow of the Hitchin representation}.

We will give a different construction of the geodesic flow of a Hitchin representation, from \cite{BCLS}, which generalizes easily
to the setting of projective Anosov representations of a word-hyperbolic group into $\sln$.
If \hbox{$\rho\in\mathcal H_d(S)$}, we let $\xi_\rho:\bgrf\to\mathscr F$ be the associated limit map to the 
space of complete flags where $\xi_\rho(x) = \{\xi_\rho^k(x)\}_{k=1}^d$. 
We consider the line bundle $F_\rho$ over $\bgrf^{(2)}$ whose fiber at $(x,y)$ is
$\ms M(x,y)=$ $$\{(\varphi,v)\in(\Real^d)^*\times\Real^d\mid \ker\varphi=\xi_\rho^{(d-1)}(x),\,v\in\xi_\rho^{(1)}(y), \varphi(v)=1\}/(\varphi,v)\sim(-\varphi,-v).$$
Consider the flow $\tilde\phi^\rho=(\tilde\phi^\rho_t: F_\rho\to F_\rho)_{t\in\Real}$ given by 
$$\tilde \phi^\rho_t(\varphi,v)=(e^{-t}\varphi,e^t v).$$ 
Notice that the $\pi_1(S)$l on $\tilde F_\rho$ given by 
$$\g(x,y,(\varphi,v))=(\gamma(x),\gamma(y),(\varphi\circ\rho(\g)^{-1},\rho(\gamma)v))$$
is free.  We further show that it is properly discontinuous and co-compact, so $\tilde\phi^\rho$ descends to a flow $\phi^\rho$ on $\ms{U}_\rho=F_\rho/\pi_1(S),$
which we call the \emph{geodesic flow} of $\rho.$ The proof proceeds by finding a $\rho$-equivariant orbit equivalence between 
$\TT^1\H^2$ and $F_\rho$.

\begin{proposition}
\label{flow orbit equivalent}
{\rm (\cite[Prop. 4.1+Prop 4.2]{BCLS})}
The group $\pi_1(S)$ acts properly discontinuous and cocompactly on $F_\rho.$  The quotient flow $\phi^\rho$  on $\ms{U}_\rho$ 
is H\"older orbit equivalent to the geodesic flow on $\TT^1S$. 
Moreover, the closed orbit associated to $\gamma\in\pi_1(S)$
has $\phi^\rho$-period $\log\Lambda_\rho(\gamma).$
\end{proposition}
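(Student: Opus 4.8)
The plan is to follow the suggestion after the statement and reduce everything to a single $\grf$-equivariant H\"older homeomorphism $\tilde\Theta\colon\TT^1\H^2\to F_\rho$ covering the identity of $\bgrf^{(2)}$. First I would record the geometry of $F_\rho$: by Theorem~\ref{equivariant} the flags $\xi_\rho(x)$ and $\xi_\rho(y)$ are transverse when $x\neq y$, so $\xi_\rho^{(1)}(y)\not\subset\xi_\rho^{(d-1)}(x)$, the fiber $\ms M(x,y)$ is nonempty, and the scaling $(\varphi,v)\mapsto(e^{-t}\varphi,e^{t}v)$ identifies it, after the $\pm$ quotient, with a single $\Real$-orbit of $\tilde\phi^\rho$. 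Thus $F_\rho$ is a principal $\Real$-bundle over $\bgrf^{(2)}$ on which $\tilde\phi^\rho$ acts by fiber translation, with explicit H\"older trivialization $[(\varphi,v)]\mapsto(x,y,\log\|v\|)$ (well defined modulo $\pm$ and H\"older since $\xi_\rho$ is H\"older). In these coordinates the action $\gamma\cdot(\varphi,v)=(\varphi\circ\rho(\gamma)^{-1},\rho(\gamma)v)$ becomes $(x,y,s)\mapsto(\gamma x,\gamma y,s+c'_\gamma(x,y))$ with the H\"older cocycle $c'_\gamma(x,y)=\log(\|\rho(\gamma)v\|/\|v\|)$, $v$ spanning $\xi_\rho^{(1)}(y)$. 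On the other side the Hopf parametrization writes $\TT^1\H^2=\bgrf^{(2)}\times\Real$ with the geodesic flow acting by fiber translation and $\grf$ acting by the Busemann cocycle $c_\gamma$ over the same base action, the identification $\bgrf\cong\partial_\infty\H^2$ being fixed through $\rho_0$. Both spaces are thus principal $\Real$-bundles over the same $\grf$-space with flow equal to fiber translation.

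With this setup the three assertions follow from $\tilde\Theta$. Because the orbits of both flows are exactly the $\Real$-fibers over points of $\bgrf^{(2)}$, any fiberwise homeomorphism covering the identity automatically sends orbits to orbits; hence $\tilde\Theta$ is an orbit equivalence, with no constraint on how it reparametrizes fibers. Equivariance makes $\tilde\Theta$ conjugate the $\grf$-action on $\TT^1\H^2$ to the $\grf$-action on $F_\rho$, so, since $\grf$ acts properly discontinuously and cocompactly on $\TT^1\H^2$ (the quotient $\TT^1 S$ being compact), the same holds on $F_\rho$; this gives the first assertion and the compactness of $\ms U_\rho$. Passing to quotients, $\tilde\Theta$ descends to a H\"older homeomorphism $\TT^1 S\to\ms U_\rho$ carrying $\phi$-orbits to $\phi^\rho$-orbits, which is the asserted H\"older orbit equivalence of the quotient flows.

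The main obstacle is the construction of $\tilde\Theta$ itself, equivalently of a fiberwise-monotone H\"older homeomorphism $H(x,y,\cdot)\colon\Real\to\Real$ satisfying $H(\gamma x,\gamma y,t+c_\gamma(x,y))=H(x,y,t)+c'_\gamma(x,y)$. One cannot require $\tilde\Theta$ to intertwine the flows, since the cocycles have different periods $\ell_{\rho_0}(\gamma)$ and $\log\Lambda_\rho(\gamma)$; it is exactly the freedom to reparametrize each fiber that makes orbit equivalence, rather than conjugacy, the correct notion here, and Sambarino's reparametrization $f_\rho$ of Proposition~\ref{functionHitchin}, whose periods $\log\Lambda_\rho(\gamma)$ agree with those of $c'$, is its natural realization. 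I would produce $H$ by comparing the two H\"older cocycles over the compact quotient $\TT^1 S$ with a partition-of-unity construction, the bi-H\"older regularity reducing to the H\"older continuity of $\xi_\rho$ (Theorem~\ref{equivariant}) and the standard H\"older dependence of the Busemann and norm-growth cocycles. Checking that the resulting $\tilde\Theta$ and its inverse are genuinely H\"older, so that cocompactness really transfers, is the delicate point.

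The period computation is then immediate and independent of the analytic work. For nontrivial $\gamma$, Theorem~\ref{hitchin anosov} makes $\rho(\gamma)$ diagonalizable over $\Real$ with distinct eigenvalues, and the discussion following Theorem~\ref{equivariant} identifies $\xi_\rho^{(1)}(\gamma_+)$ with its top eigenline and $\xi_\rho^{(d-1)}(\gamma_-)$ with the span of the remaining eigenlines. Taking $(x,y)=(\gamma_-,\gamma_+)$ and a representative $(\varphi,v)\in\ms M(\gamma_-,\gamma_+)$, and letting $\lambda$ be the top eigenvalue (so $|\lambda|=\Lambda_\rho(\gamma)$), we have $\rho(\gamma)v=\lambda v$ and $\varphi\circ\rho(\gamma)^{-1}=\lambda^{-1}\varphi$, whence $\gamma\cdot(\varphi,v)=(\lambda^{-1}\varphi,\lambda v)$. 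Comparing with $\tilde\phi^\rho_\tau(\varphi,v)=(e^{-\tau}\varphi,e^{\tau}v)$ and using the $\pm$ identification to absorb the sign of $\lambda$ yields $\gamma\cdot(\varphi,v)=\tilde\phi^\rho_{\log|\lambda|}(\varphi,v)$, so the closed orbit of $\gamma$ in $\ms U_\rho$ has period $\log|\lambda|=\log\Lambda_\rho(\gamma)$, as claimed.
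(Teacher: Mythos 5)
Your overall frame is the same as the paper's: build a $\grf$-equivariant fiber-preserving H\"older homeomorphism $\TT^1\H^2\to F_\rho$ covering the identity on $\bgrf^{(2)}$, transport proper discontinuity and cocompactness through it, descend to the quotient, and compute the period on the fiber over the fixed-point pair of $\gamma$. The bundle/cocycle bookkeeping and the period computation are correct (the latter is exactly the paper's computation). But there is a genuine gap at the one place where all the work lies: the construction of the fiberwise homeomorphism $H(x,y,\cdot)\colon\Real\to\Real$. A ``partition-of-unity comparison of the two cocycles over the compact quotient'' produces an equivariant fiberwise \emph{map} (equivalently, a $\grf$-invariant continuous metric on the line bundle $\Sigma$ with fiber $\xi_\rho^{(1)}$), but it gives no reason for $t\mapsto H(x,y,t)$ to be strictly monotone, nor proper, i.e.\ no reason for $\tilde\Theta$ to be injective or to hit every point of each fiber. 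These are exactly the points at which the Anosov property of Hitchin representations must enter, and your construction never invokes it. The paper's route is: the flow $\psi^\rho$ on $\Sigma$ is contracting because $\rho$ is Anosov; an averaging procedure upgrades this to a H\"older metric $\tau$ with \emph{uniform} exponential contraction (Lemma \ref{contracting metric}); the unit-$\tilde\tau$-norm section of $F_\rho$ then defines $u(x,y,t)$, and the uniform contraction is precisely what makes $t\mapsto u(x,y,t)$ strictly increasing (injectivity) and proper (surjectivity onto the fiber). Some such input is unavoidable: for a non-Anosov representation with transverse limit maps the space $F_\rho$ and the cocycles are all still defined, yet the action need not be properly discontinuous or cocompact, so no argument using only the H\"older continuity of $\xi_\rho$ and of the cocycles can succeed.

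Relatedly, you identify the ``delicate point'' as checking that $\tilde\Theta$ and its inverse are H\"older; in fact the delicate point is that $\tilde\Theta$ is a bijection at all, and the H\"older regularity then comes along with the H\"older metric $\tau$ and the H\"older limit map. One small further caution: proper discontinuity and cocompactness of the $\grf$-action on $F_\rho$ only ``transfer'' from $\TT^1\H^2$ once $\tilde\Theta$ is known to be an equivariant homeomorphism onto all of $F_\rho$, so this transfer cannot be invoked before the monotonicity and properness issues above are settled. If you insert the contraction lemma and the unit-norm section in place of the partition-of-unity step, your argument becomes the paper's proof.
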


\noindent
{\em Sketch of proof:}
Consider the flat bundle $E_\rho$ over $\TT^1S$ which is the quotient of 
$\TT^1\mathbb H^2\times \mathbb R^d$
by $\pi_1(S)$ where $\gamma\in\pi_1(S)$ acts on $\TT^1\mathbb H^2$ by $\rho_0(\gamma)$ and acts on $\mathbb R^d$ by
$\rho(\gamma)$. 
One considers a flow $\tilde\psi^\rho$ on $\TT^1\mathbb H^2\times \mathbb R^d$
which acts as the geodesic flow on $\TT^1\mathbb H^2$ and acts trivially on $\mathbb R^d$. 
The flow $\tilde\psi^\rho$ preserves the $\rho(\pi_1(S))$-invariant line sub-bundle $\tilde\Sigma$ whose fiber over
the point $(x,y,t)$ is $\xi^{(1)}_\rho(x)$.
Thus, $\tilde\psi^\rho$ descends to a flow $\psi^\rho$ on $E_\rho$ preserving the line sub-bundle
$\Sigma$ which is the quotient of $\tilde\Sigma$. Since $\rho$ is Anosov with respect to a minimal parabolic subgroup, $\psi^\rho$ is
contracting on $\Sigma$ (see \cite[Lem. 2.4]{BCLS}).
Since $\psi^\rho$ is contracting on $\Sigma$ one may use an averaging procedure to
construct a metric $\tau$ on $\Sigma$ with respect to which $\psi^\rho$ is uniformly contracting.

\begin{lemma}
\label{contracting metric}
{\rm (\cite[Lemma 4.3]{BCLS})}
There exists a H\"older metric $\tau$ on $\Sigma$ and $\beta>0$, so that for all $t>0$,
$$(\psi^\rho_t)_*(\tau)<e^{-\beta t}\tau.$$
\end{lemma}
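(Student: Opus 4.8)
The plan is to build the adapted metric $\tau$ by an exponential average of an arbitrary background metric along the flow: this is the standard device for upgrading the \emph{eventual} exponential contraction coming from the Anosov property (which carries an unknown multiplicative constant) into a genuine one-step uniform contraction. First I would fix a starting point. Since $\xi_\rho$ is H\"older, $\Sigma$ is a H\"older line bundle over the compact manifold $\TT^1S$, so a partition-of-unity argument produces \emph{some} H\"older bundle metric $\tau_0$ on $\Sigma$; no equivariance is needed, as we work downstairs on the compact quotient. The hypothesis that $\rho$ is Anosov with respect to a minimal parabolic subgroup, as recorded in \cite[Lem. 2.4]{BCLS}, is exactly the statement that $\psi^\rho$ contracts $\Sigma$ exponentially: there are uniform constants $C\ge 1$ and $\alpha>0$ with
$$\tau_0(\psi^\rho_s v)\le C e^{-\alpha s}\,\tau_0(v)$$
for all $v\in\Sigma$ and all $s\ge 0$. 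In a local H\"older trivialization of the line bundle this says that $\psi^\rho$ acts by a H\"older multiplicative cocycle $c_s(x)$ over the geodesic flow with $c_s\le Ce^{-\alpha s}$.

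Next I would fix $\beta\in(0,\alpha)$ and define
$$\tau(v)^2=\int_0^\infty e^{2\beta s}\,\tau_0(\psi^\rho_s v)^2\, ds.$$
The contraction estimate bounds the integrand by $C^2 e^{2(\beta-\alpha)s}\tau_0(v)^2$, so the integral converges, uniformly over the compact base, and defines a metric on $\Sigma$. The uniform contraction is then a one-line change of variables: for $t>0$, substituting $u=s+t$ gives
$$\tau(\psi^\rho_t v)^2=\int_0^\infty e^{2\beta s}\tau_0(\psi^\rho_{s+t}v)^2\, ds=e^{-2\beta t}\int_t^\infty e^{2\beta u}\tau_0(\psi^\rho_u v)^2\, du\le e^{-2\beta t}\tau(v)^2,$$
with strict inequality because the discarded piece $\int_0^t e^{2\beta u}\tau_0(\psi^\rho_u v)^2\, du$ is strictly positive. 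Taking square roots, this is precisely $(\psi^\rho_t)_*\tau< e^{-\beta t}\tau$.

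The only real content is the H\"older regularity of $\tau$, and this is the step I expect to be the main obstacle. Reducing to the base via the cocycle, one must show $G(x)=\int_0^\infty e^{2\beta s}c_s(x)^2\, ds$ is H\"older, i.e.\ that the weighted integral converges in H\"older norm and not merely pointwise. The difficulty is that the H\"older seminorm of $c_s$ grows with $s$, since the geodesic flow $\phi_s$ distorts distances at an exponential rate $\nu$: writing $\log c_s(x)=\int_0^s g(\phi_r x)\, dr$ for a H\"older generator $g$ and using H\"older continuity of $g$ one finds that the $a$-H\"older seminorm of $c_s^2$ is of order $e^{(a\nu-2\alpha)s}$, so the $s$-integrand has seminorm of order $e^{(2\beta+a\nu-2\alpha)s}$. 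This is integrable precisely when $2\beta+a\nu<2\alpha$, which can always be arranged by taking $\beta$ small and, crucially, by allowing the H\"older exponent $a$ to be small enough that $a\nu$ is dominated by the contraction rate $2\alpha$. This controlled loss of exponent is exactly the mechanism underlying the H\"older section theory of Hirsch-Pugh-Shub \cite{hirsch-pugh-shub}; once it is in place the averaged metric $\tau$ is H\"older of exponent $a$ and the proof is complete.
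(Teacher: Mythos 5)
Your construction is correct and is precisely the ``averaging procedure'' that the paper invokes here (the survey gives no proof of this lemma, only citing \cite[Lemma 4.3]{BCLS} and describing the method in a single line before the statement). The one remark worth making is that the H\"older-regularity difficulty you flag for the infinite average $\int_0^\infty e^{2\beta s}\tau_0(\psi^\rho_s v)^2\,ds$ --- which you resolve correctly by shrinking the H\"older exponent in the manner of Hirsch--Pugh--Shub --- can be sidestepped entirely by averaging over a finite window $[0,T]$ with $T$ chosen so that $Ce^{(\beta-\alpha)T}<1$: one then integrates a family of H\"older functions with uniformly bounded H\"older norms, and the same change-of-variables computation (now comparing the discarded piece over $[0,t]$ with the newly acquired piece over $[T,T+t]$) still yields the strict contraction.
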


We construct a $\rho$-equivariant 
H\"older orbit equivalence
$$\tilde j(x,y,t)=(x,y,u(x,y,t))$$
where $\tilde\tau(u(x,y,t))=1$ and $\tilde \tau$ is the lift of $\tau$ to $\tilde\Sigma$.
The map $\tilde j$ is $\rho$-equivariant, since $\xi_\rho$ is, and the fact that
$\tilde \tau$ is uniformly contracting implies that $\tilde j$ is injective. It remains
to prove that $\tilde j$ is proper to show that it is a homeomorphism. (We refer
the reader to the proof of Proposition 4.2 in \cite{BCLS} for this relatively simple argument.)
Then, $\tilde j$ descends to a H\"older orbit equivalence $j$ between $\TT^1S$ and $U_\rho$.

In order to complete the proof, it  suffices to evaluate the period of the closed
orbit associated to an element $\gamma\in\pi_1(S)$. The closed orbit associated to $\gamma$
is the quotient of the fiber of $F_\rho$ over $(\gamma^+,\gamma^-)$. If we pick $v\in\xi_\rho^{(1)}(\gamma^+)$ and
$\varphi\in \xi_\rho^{(d-1)}(\gamma^-)$ so that $\varphi(v)=1$,
then 
\begin{eqnarray} 
\gamma (\gamma^+,\gamma^-,(\varphi,v)) &=& (\gamma^-,\gamma^+,(\pm(\Lambda_\rho(\gamma))^{-1}\varphi,\pm \Lambda_\rho(\gamma) v )) \nonumber \\
&=&  \tilde\phi^\rho_{\log\Lambda_\rho(\gamma)}(\gamma^+,\gamma^-,(\varphi,v)),
\end{eqnarray}
so the closed orbit has period $\log\Lambda(\rho(\gamma))$ as claimed.
\qed

\medskip

Notice that Proposition \ref{functionHitchin} follows immediately from Proposition \ref{flow orbit equivalent} and
Lemma \ref{time-conjugacy}.

\subsection{The thermodynamic mapping}

Proposition \ref{functionHitchin} allows us to construct a thermodynamic mapping in the Hitchin setting.
Liv\v sic's Theorem (Theorem \ref{livsic}) guarantees that 
if $\rho\in\mathcal H_d(S)$, then the Liv\v sic cohomology class of the reparametrization function $f_\rho$
is well-defined. So, applying Lemma \ref{entropia2}, we may define a thermodynamic mapping 
$$\Phi:\mathcal H_d(S)\to \mathcal H(\TT^1S)$$  
from the Hitchin component $\mathcal H_d(S)$ to the  space 
$\mathcal H(\TT^1S)$ of Liv\v sic cohomology classes of pressure zero H\"older functions on $\TT^1S$,
by letting
$$\Phi(\rho) =[-h(\rho)\ f_\rho].$$  
In order to construct an analytic pressure form, we need to know that $\Phi$ admits
local analytic lifts to the space $\mathcal P(\TT^1S)$ of pressure zero H\"older functions on $\TT^1S$.

\begin{proposition}[{\cite[Prop. 6.2]{BCLS}}]
\label{Labourie maps analytic} 
The mapping $\Phi$ admits local analytic lifts to the space $\mathcal P(\TT^1S)$, i.e.  each $\rho \in \mathcal H_d(S)$ has an open neighborhood $W$   and an analytic map $\tilde{\Phi}: W \rightarrow \mathcal P(\TT^1S)$ such that
$\Phi(\rho) = [\tilde{\Phi}(\rho)]$.
\end{proposition}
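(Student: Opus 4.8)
The plan is to produce, in a neighborhood of a given $\rho\in\mathcal H_d(S)$, an \emph{honest} analytically-varying representative of the reparametrization function $f_\rho$, rather than merely its Liv\v sic cohomology class, and then to correct it by the entropy so that the result has pressure zero. Concretely I would set $\tilde\Phi(\rho)=-h(\rho)f_\rho$ with $h(\rho)=h(f_\rho)$. By Lemma \ref{entropia2} this lands in $\mathcal P(\TT^1S)$ precisely because $h(\rho)=h(f_\rho)$, and since the explicit $f_\rho$ has periods $\int_{[\gamma]}f_\rho=\log\Lambda_\gamma(\rho)$ (as in Proposition \ref{functionHitchin}), Liv\v sic's Theorem \ref{livsic} forces its cohomology class to be the one appearing in $\Phi$, so $[\tilde\Phi(\rho)]=\Phi(\rho)$. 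The analyticity of $h(\rho)=h(f_\rho)$ is then immediate from Proposition \ref{entropy analytic} once $f_\rho$ is known to vary analytically. Thus the entire content of the statement is the \emph{analytic} construction of $f_\rho$ and the locality of that construction, which furnishes the required neighborhood $W$.

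First I would establish the analogue of Proposition \ref{morse}: the limit maps $\xi_\rho\colon\bgrf\to\mathscr F$ of Theorem \ref{equivariant} vary analytically in $\rho$. Unlike the Fuchsian case there is no quasiconformal theory available, so I would exploit the Anosov property directly. All representations near $\rho$ are Anosov with respect to the minimal parabolic, and structural stability realizes the invariant section $\sigma_\rho$ of the flag bundle (equivalently the contracting line sub-bundle $\Sigma=\Sigma_\rho$ of $\psi^\rho$ over $\TT^1S$) as the fixed point of a graph-transform contraction that depends analytically on the parameter $\rho$. Applying the analytic-dependence theorem of Hirsch-Pugh-Shub \cite{hirsch-pugh-shub} to this contraction yields, on a neighborhood $W$ of $\rho$, the analytic variation of $\xi_\rho$, hence of $\Sigma_\rho$ and of the flow $\psi^\rho$ preserving it.

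Next I would write $f_\rho$ explicitly using the geodesic flow of Proposition \ref{flow orbit equivalent}. Since $\psi^\rho$ is uniformly contracting on $\Sigma_\rho$ (Lemma \ref{contracting metric}), the reparametrization comparing geodesic time on $\TT^1S$ with $\phi^\rho$-time on $\ms U_\rho$ is exactly the logarithmic rate at which a fixed vector of the line $\xi^{(1)}_\rho(x)$ contracts under the flow; with $\tau$ the contracting metric of Lemma \ref{contracting metric},
$$
f_\rho(x,y,t)=-\left.\frac{d}{ds}\right|_{s=0}\log\bigl\|v\bigr\|_{\tau,(x,y,t+s)},\qquad 0\neq v\in\xi^{(1)}_\rho(x).
$$
Integrating along the closed orbit for $\gamma$ telescopes, and $\rho$-equivariance of $\tau$ together with $\rho(\gamma)v=\Lambda_\gamma(\rho)v$ collapses the boundary terms to $\log\Lambda_\gamma(\rho)$, so this $f_\rho$ has the periods of Proposition \ref{functionHitchin}. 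Its analytic variation I would again deduce from Hirsch-Pugh-Shub: the metric $\tau$ (equivalently the $\tau$-unit section $u(x,y,t)$ normalizing the orbit equivalence $\tilde j$ of Proposition \ref{flow orbit equivalent}) is obtained from $\psi^\rho$ by an averaging/fixed-point procedure depending analytically on $\rho$, whence so does its logarithmic derivative along the flow; alternatively one could adapt the Katok-Knieper-Pollicott-Weiss \cite{KKPW} method used in the Fuchsian setting.

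The main obstacle is precisely this analytic variation of $f_\rho$. In the Teichm\"uller setting Proposition \ref{proposition:function} was proved by a transparent cross-ratio formula in the boundary map; here no closed form is available, so one must track analyticity through the dynamically defined objects (limit map, invariant splitting, contracting metric). The technical heart is therefore the parametrized Hirsch-Pugh-Shub analytic-dependence statement, and in particular the verification that the \emph{a priori} only H\"older contracting metric $\tau$, produced by averaging, can be chosen to depend analytically on $\rho$ while retaining the uniform contraction of Lemma \ref{contracting metric}; everything else (entropy analyticity, pressure-zero normalization, and the identification of cohomology classes) is then formal.
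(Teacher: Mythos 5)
Your proposal follows essentially the same route as the paper: reduce everything to the analytic variation of $f_\rho$, obtain the analytic variation of the limit maps $\xi_\rho$ by realizing the invariant section of the flag bundle as a fixed point of a flow-induced contraction and invoking Hirsch--Pugh--Shub, and then extract $f_\rho$ from the contracting metric on the line bundle via an averaging procedure, exactly as in \cite[Section 6]{BCLS}. The one precision worth adding is that Hirsch--Pugh--Shub by itself only yields smooth dependence on the parameter; the upgrade to analytic dependence is obtained by complexifying (working with representations into $\ms{PSL}_d(\mathbb C)$) so that the fixed point of the holomorphically-varying contraction is a uniform limit of holomorphic maps.
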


\noindent
{\em Sketch of proof:}
Let $\rho\in\mathcal H_d(S)$. Choose a neighborhood $V$ of $\rho$  which we may implicitly
identify with a submanifold of  $\ms{Hom}(\pi_1(S),\psln)$ (by an analytic map 
whose composition with the projection map
is the identity). 
Consider the $\mathscr F$-bundle $\tilde A= V\times  \TT^1\mathbb H^2\times \mathscr F$ over 
$V\times \TT^1\mathbb H^2$. There is a natural action of $\pi_1(S)$ on $\tilde A$ so that $\gamma\in\pi_1(S)$
take $(\eta,(x,y,t),F)$ to $(\eta,\gamma(x,y,t),\eta(\gamma)F))$ with quotient $A$. The limit map $\xi_\rho$ determines
a section $\sigma_\rho$ of $A$ over $\{\rho\}\times \TT^1S$. 

The geodesic flow on $\TT^1S$ lifts to a flow $\{\Psi_t\}_{t\in\mathbb R}$ on $A$ 
(whose lift to $\tilde A$ acts trivially in the $V$ and $\mathscr F$ direction).
The Anosov property of Hitchin representations implies that the inverse flow is contracting on 
$\sigma_\rho(\{\rho\}\times \TT^1S)$.
One may extend $\sigma_\rho$ to a  section 
$\sigma$ of $A$ over $V\times \TT^1S$ which varies analytically in the $V$ coordinate
(after first possibly restricting  to a smaller neighborhood of the lift of $\rho$). One may now
apply the machinery developed by Hirsch-Pugh-Shub \cite{hirsch-pugh-shub} (see also Shub \cite{shub}),
to find a  section $\tau$ of $A$ over $W\times \TT^1S$,
where $W$ is a sub-neighborhood
of $V$, so that the inverse flow preserves and is contracting along $\tau(W\times \TT^1S)$. Here the main
idea is to apply  the contraction mapping theorem cleverly to show that one may take
$$\tau(\eta,X)=\lim \Psi_{-nt_0}(\sigma(\eta,\Psi_{nt_0}(x)))$$
for some $t_0>0$ so that $\Psi_{-t_0}$ is uniformly contracting.  It follows from standard techniques that $\tau$ 
varies smoothly in the $W$ direction and that the restriction to $\{\eta\}\times \TT^1S$ is H\"older for all
$\eta\in W$. One must complexify the situation by considering representations
into $\ms{PSL_d(\mathbb C)}$ in order to verify that $\tau$ varies analytically in the $W$ direction.
(See Section 6 of \cite{BCLS} for more details). 

The section $\tau$ lifts to a section $\tilde\tau$ of $\tilde A$ which
is induced by a map 
$$\hat\xi:W\times \partial_\infty\pi_1(S)\to\mathscr F$$
which
varies analytically in the $W$ direction such that 
$$\hat\xi_\eta=\hat\xi(\eta,\cdot):\partial_\infty\pi_1(S)\to \P(\Real^d)$$
is $\eta$-equivariant and H\"older for all $\eta\in W$. The uniqueness of limit maps for Hitchin representations guarantees
that $\hat\xi_\eta=\xi_\eta$. So, $ \xi_\eta$ varies analytically over $W$.

One may then examine the proof of Proposition \ref{flow orbit equivalent} and apply an averaging
procedure, as in the Teichm\"uller space case, to produce an analytically varying family of H\"older
function $\{f_\eta\}_{\eta\in W}$, so that the reparametrization of the geodesic flow on $\TT^1S$ by $f_\eta$ has
the same periods as $U_\eta$. (Again to get analytic, rather than just smooth, variation one
must complexify the situation. See Section 6 of \cite{BCLS} for details.)
Therefore, the map 
$$\tilde\Phi:W\to \mathcal P(\TT^1S)$$ 
given by 
$$\tilde\Phi(\eta)= - h(\eta) f_\eta$$
is an analytic local lift of $\Phi$.
\qed

\subsection{Entropy and intersection numbers}
\label{hitchin entropy}

Proposition \ref{functionHitchin} allows us to define entropy and intersection numbers for
Hitchin representations.
If \hbox{$\rho\in \mathcal H_d(S)$}, let
$$R_T(\rho)=\{[\gamma]\in [\pi_1(S)]\ |\ \log(\Lambda_\rho(\gamma))\le T\}.$$
The {\em entropy} of $\rho$ is given by
$$h(\rho)=h(f_\rho)=\lim_{T\to\infty} \frac{\log \#R_T(\rho)}{T}  .$$
The {\em intersection number} of $\rho$ and $\eta$  in $\mathcal H_d(S)$ is given by
$$\II(\rho,\eta)=\II(f_\rho,f_\eta)=\lim_{T\to\infty}\ {1\over \#R_T(\rho)}\ 
\sum_{[\gamma]\in R_{\rho}(T)}{\log(\Lambda_{\eta}(\gamma))\over \log(\Lambda_{\rho}(\gamma))}$$
and their {\em renormalized intersection number} is
$$\JJ(\rho,\eta)=\JJ(f_\rho,f_\eta)={h(\eta)\over h(\rho)} \II(\rho,\eta).$$
Proposition \ref{Labourie maps analytic} and Corollary \ref{entropy analytic} immediately give:

\begin{corollary}
\label{Hitchin entropy analytic}
Entropy varies analytically over $\mathcal H_d(S)$ and intersection $\II$ and
renormalized intersection $\JJ$ vary
analytically over $\mathcal H_d(S)\times \mathcal H_d(S)$.
\end{corollary}

\medskip\noindent
{\bf Remark:} It follows from Bonahon's work \cite{bonahon}, that the intersection
number is symmetric on Teichm\"uller space.
However, it is clear that the intersection number is not symmetric on
the Hitchin component. For example, one may use the work of Zhang \cite{zhang1,zhang2} to exhibit for
all $K>1$ and $d\ge 3$, $\rho_1,\rho_2\in\mathcal H_d(S)$ such that 
\hbox{$\log \Lambda (\rho_1(\gamma))\ge K \log\Lambda(\rho_2(\gamma))$} for all $\gamma\in \pi_1(S)-\{id\}$,
so $\II(\rho_1,\rho_2)\ge K$ and $\II(\rho_2,\rho_1)\le {1\over K}$.
One expects that the renormalized intersection number is also asymmetric. 

\subsection{The pressure form}

We then define the  analytic {\em pressure form} on $\mathcal H_d(S)$ as the pullback of the {\em pressure form} 
on $\mathcal P(\TT^1S)$ 
using a lift of the thermodynamic mapping $\Phi.$ Explicitly,
if  $\{\eta_t\}_{t\in (-1,1)}$ is an analytic path in 
$\mathcal H_d(S)$ and $\tilde\Phi:U\to \mathcal P(\TT^1S)$ is an analytic lift of $\Phi$ defined on a
neighborhood $U$ of $\rho$, then we define
$$\|\dot\eta_0\|_\PP^2=||d\tilde\Phi(\dot\eta_0)||_\PP^2.$$

If \hbox{$\rho\in \mathcal H_d(S)$} and 
$\JJ_\rho:\mathcal H_d(S)\to \mathbb R$ is defined by 
$$\JJ_\rho(\eta)=\JJ(\rho,\eta)=\JJ(f_\rho,f_\eta),$$ 
then Proposition \ref{pressure form is a Hessian} implies that the pressure form  on $\TT_\rho \mathcal H_d(S)$
is the Hessian of  $\JJ_\rho$ at $\rho.$ Since the renormalized intersection number is mapping class group
invariant by definition, it follows that the pressure form is also mapping class group invariant.
Wolpert's theorem \cite{wolpert-thurston} implies that the restriction of the pressure form to the Fuchsian locus
is a multiple of the Weil-Petersson metric.
It only remains to show that the pressure form is positive definite, 
so gives rise to an analytic Riemannian metric on all of $\mathcal H_d(S)$. 

\subsection{Non-degeneracy of the pressure metric}

We complete the proof of Theorem \ref{hitchin metric} by proving:

\begin{proposition}
\label{nondegenerate}
The pressure form is non-degenerate at each point in $\mathcal H_d(S)$.
\end{proposition}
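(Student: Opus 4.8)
The plan is to show that the pressure form is non-degenerate by analyzing degenerate vectors via Corollary \ref{degenerate vectors}, and deriving a contradiction with the local structure of the Hitchin component. Suppose $\{\rho_t\}_{t\in(-1,1)}$ is an analytic path in $\mathcal H_d(S)$ with $\rho_0=\rho$ and $\|\dot\rho_0\|_\PP=0$. Writing $f_t=f_{\rho_t}$ for the reparametrization functions of Proposition \ref{functionHitchin}, the thermodynamic mapping is $\Phi(t)=-h(f_t)f_t$, and Corollary \ref{degenerate vectors} tells us that $\|\dot\Phi_0\|_\PP=0$ if and only if
$$\left.\frac{\partial}{\partial t}\right|_{t=0} h(\rho_t)\,\log\Lambda_\gamma(\rho_t)=0$$
for every $\gamma\in\pi_1(S)$, using that $\ell_{f_t}(\gamma)=\log\Lambda_\gamma(\rho_t)$. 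So the first step is to reduce non-degeneracy to showing that this family of conditions on the (entropy-weighted) spectral radii forces $\dot\rho_0=0$.

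The key step is then to produce \emph{enough} length functions, together with a suitable renormalization, so that the vanishing derivatives above pin down the tangent vector. In the Teichm\"uller case (the previous theorem), the analogous argument used Schmutz's result that $6g-5$ geodesic length functions give a real analytic embedding of $\mathcal T(S)$, so the derivatives $\frac{\partial}{\partial t}\ell_{\eta_t}(\gamma_i)=0$ immediately forced $\dot\eta_0=0$. The essential difficulty here is that the entropy $h(\rho_t)$ is \emph{not} constant on $\mathcal H_d(S)$ (unlike Fact (3) for Teichm\"uller space), so the condition involves the product $h(\rho_t)\log\Lambda_\gamma(\rho_t)$ rather than the bare spectral radii. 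My plan is to separate these two issues: first argue that one may reduce to the case where the derivative of entropy along the path vanishes, and then invoke an analogue of Schmutz's embedding theorem for $\mathcal H_d(S)$, asserting that the spectral radii $\log\Lambda_\gamma(\rho)$ over finitely many $\gamma$ give local coordinates (equivalently, that they separate tangent vectors), to conclude $\dot\rho_0=0$.

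I would handle the entropy-derivative term as follows. Differentiating and evaluating at $t=0$, the condition becomes
$$h(\rho)\left.\frac{\partial}{\partial t}\right|_{t=0}\log\Lambda_\gamma(\rho_t)+\left(\left.\frac{\partial}{\partial t}\right|_{t=0}h(\rho_t)\right)\log\Lambda_\gamma(\rho)=0$$
for all $\gamma$. The ratio $\frac{\partial_t\log\Lambda_\gamma(\rho_t)|_0}{\log\Lambda_\gamma(\rho)}$ would then have to equal the same constant $c=-\frac{1}{h(\rho)}\partial_t h(\rho_t)|_0$ independent of $\gamma$, which says that to first order all length functions scale by a common factor. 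One expects this to be impossible unless $\dot\rho_0=0$, because a uniform first-order rescaling of the entire length spectrum is a very rigid condition; this is exactly the phenomenon that renormalization by entropy is designed to kill, and indeed Proposition \ref{renormalized rigidity} shows $\JJ\ge 1$ with equality only at Liv\v sic-cohomologous reparametrizations. The cleanest route is therefore to argue directly from the Hessian-of-$\JJ$ interpretation (Proposition \ref{pressure form is a Hessian}): since $\JJ_\rho$ has a global minimum value $1$ at $\rho$ by Proposition \ref{renormalized rigidity}, a null vector of its Hessian must be tangent to the locus where $\JJ_\rho\equiv 1$, i.e.\ where $h(\rho)f_\rho$ and $h(\eta)f_\eta$ are Liv\v sic cohomologous, forcing the renormalized length spectra to agree to first order.

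The main obstacle, and the place where the Hitchin setting genuinely departs from Teichm\"uller space, is the last ingredient: one needs that the renormalized spectral-radius data locally determines the point in $\mathcal H_d(S)$, i.e.\ an injectivity/rigidity statement for the length spectrum along the Hitchin component. This is the analogue of Schmutz's theorem but with the nonconstant entropy factored out, and it is the crux that cannot be dispatched by a routine calculation. I would expect the author to establish it by combining the analyticity of $\II$ and $\JJ$ (Corollary \ref{Hitchin entropy analytic}) with a spectral rigidity result---for instance, that equality of renormalized length spectra forces the limit maps, and hence the representations, to coincide---so that the vanishing of all the first derivatives above, for sufficiently many $\gamma$, indeed yields $\dot\rho_0=0$ and therefore positive-definiteness of the pressure form.
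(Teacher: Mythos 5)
Your setup matches the paper's: you apply Corollary \ref{degenerate vectors}, observe that non-constancy of entropy turns the degeneracy condition into $D_\rho\log\Lambda_\gamma(v)=K\log\Lambda_\gamma(\rho)$ for all $\gamma$ with $K=-D_\rho h(v)/h(\rho)$ (this is exactly Lemma \ref{hitchin length equation}), and you correctly identify that the crux is to show these conditions force $v=0$. But at that crux you have a genuine gap, and both of the mechanisms you propose to fill it are unsound. First, the claim that ``a null vector of the Hessian of $\JJ_\rho$ must be tangent to the locus where $\JJ_\rho\equiv 1$'' is a false general principle: a function can attain an isolated minimum and still have a degenerate Hessian there (consider $x\mapsto x^4$ at the origin), so Proposition \ref{renormalized rigidity} together with Proposition \ref{pressure form is a Hessian} does not by itself rule out degenerate directions. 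Second, deducing the infinitesimal statement from a global spectral rigidity theorem (equality of renormalized length spectra implies equality of representations, as in Theorem \ref{hitch-rigid}) is likewise not automatic, for the same reason: zeroth-order rigidity does not control second-order degeneracy of $\JJ_\rho$.

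What the paper actually does at this point is Proposition \ref{K=0 Hitchin}, which is a hands-on argument with no thermodynamic input: for coprime $\alpha,\beta$ one studies the analytic functions $F_n(\rho)=\tr(\p_1(\rho(\alpha))\rho(\beta^n))/\lambda_1(\rho(\beta^n))\tr(\p_1(\rho(\alpha))\p_1(\rho(\beta)))$, expands $\rho(\beta^n)=\sum_k\lambda_k(\rho(\beta))^n\p_k(\rho(\beta))$, and uses Lemma \ref{typk} (a computation of Benoist) plus the elementary power-series Lemma \ref{technical} to show that all the eigenvalue-ratio derivatives $\dot t_k$ vanish; the identity $\lambda_{d,\beta}=1/\lambda_{1,\beta^{-1}}$ then forces $K=0$ and hence $D_\rho\tr_\gamma(v)=0$ for all $\gamma$. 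The conclusion $v=0$ then follows from Lemma \ref{traces generate} (derivatives of traces span the cotangent space), which plays the role you assign to a hypothetical Schmutz-type embedding by spectral radii. So while your outline reaches the right doorstep, the decisive step---showing $K=0$ and killing the tangent vector---is precisely the part you leave unproved, and it cannot be obtained from the rigidity and Hessian statements you cite.
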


We note that each Hitchin component $\mathcal H_d(S)$ lifts to a component of
$$X(\pi_1(S),\sln)={\rm Hom}(\pi_1(S),\sln)/\sln$$
and we will work in this lift throughout the proof (see Hitchin \cite[Section 10]{hitchin}.)

In particular,  this allows us to  define, for all $\gamma\in\grf$,
an analytic function $\tr_\gamma:\mathcal H_d(S)\to \mathbb R$, where $\tr_\gamma(\rho)$
is the trace of $\rho(\gamma)$.

As in the Teichm\"uller case, the proof proceeds by applying Corollary \ref{degenerate vectors}. 
If $\{\eta_t\}_{(-\eps,\eps)}\subset\mathcal H_d(S)$ is a path such that $\|\dot\eta_0\|_\PP=\|d\Phi v\|_\PP=0$,
then  
\begin{equation}\label{degenerate path}\left.\frac{\partial}{\partial t}\right|_{t=0} h(f_{\eta_t}) \ell_{f_{\eta_t}}(\g)=0.\end{equation}
for all $\g\in\grf$.
The main difference is that entropy is not constant in the Hitchin component. 

If $\g\in\pi_1(S)$, we may think of $\log\Lambda_\gamma$ as an analytic function on 
$\mathcal H_d(S)$, where we recall that $\Lambda_\gamma(\rho)$ is the spectral radius
of $\rho(\gamma)$.
The following lemma is an immediate consequence of 
Equation (\ref{degenerate path})  (compare with Equation (\ref{teich critical length}) in Section \ref{teich-pressure}).

\begin{lemma}\label{hitchin length equation} 
If $v\in\TT_\rho \mathcal H_d(S)$ and $\|D_\rho\Phi(v)\|_\PP=0$,
then
$$D_{\rho}\log\Lambda_\g (v)= -\frac{D_\rho h(v)}{h(\rho)}\log\Lambda_\g(\rho)$$
for all $\g\in\grf$.
\end{lemma}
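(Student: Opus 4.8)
The plan is to unwind Equation~(\ref{degenerate path}) using the specific structure of our reparametrization functions in the Hitchin setting. First I would recall from Section~\ref{geodesic flow} that the period of the closed orbit associated to $\g$ under the flow $\phi^{f_{\eta_t}}$ is exactly $\ell_{f_{\eta_t}}(\g)=\log\Lambda_\g(\eta_t)$, the logarithm of the spectral radius of $\eta_t(\g)$. Thus the abstract period $\ell_{f_{\eta_t}}(\g)$ appearing in Corollary~\ref{degenerate vectors} is identified with the concrete analytic function $t\mapsto\log\Lambda_\g(\eta_t)$, and $h(f_{\eta_t})=h(\eta_t)$ is the entropy of the representation.

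With this identification, Equation~(\ref{degenerate path}) reads
\begin{equation*}
\left.\frac{\partial}{\partial t}\right|_{t=0} \bigl(h(\eta_t)\,\log\Lambda_\g(\eta_t)\bigr)=0
\end{equation*}
for every $\g\in\grf$. Next I would simply apply the product rule. Writing $v=\dot\eta_0\in\TT_\rho\mathcal H_d(S)$ and using that $D_\rho h(v)$ and $D_\rho\log\Lambda_\g(v)$ denote the derivatives of the respective analytic functions in the direction $v$ (these are analytic by Corollary~\ref{Hitchin entropy analytic} and the remark preceding Lemma~\ref{hitchin length equation}), the product rule gives
\begin{equation*}
D_\rho h(v)\,\log\Lambda_\g(\rho) + h(\rho)\,D_\rho\log\Lambda_\g(v)=0.
\end{equation*}
Since $\rho\in\mathcal H_d(S)$, the entropy $h(\rho)$ is strictly positive, so I may divide through by $h(\rho)$ and rearrange to obtain precisely the asserted identity
\begin{equation*}
D_\rho\log\Lambda_\g(v)= -\frac{D_\rho h(v)}{h(\rho)}\,\log\Lambda_\g(\rho).
\end{equation*}

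There is really no single hard step here: the content is entirely contained in the chain of earlier results. The only points requiring a word of care are the legitimacy of interchanging the derivative with the period --- i.e.\ knowing that $\ell_{f_{\eta_t}}(\g)$ is genuinely the smooth/analytic function $\log\Lambda_\g(\eta_t)$, which follows from Proposition~\ref{flow orbit equivalent} and the analytic variation established in Proposition~\ref{Labourie maps analytic} --- and the nonvanishing of $h(\rho)$, which holds because Hitchin representations are quasi-isometric embeddings (Theorem~\ref{hitchin anosov}) with exponential orbit growth. Everything else is the product rule. I expect the genuine difficulty of Proposition~\ref{nondegenerate} to lie not in this lemma but in the \emph{subsequent} argument that deduces $v=0$ from this family of constraints, where one must rule out the possibility that $D_\rho h(v)\ne 0$ conspires with the spectral radii; this lemma merely isolates the relation that that later argument will exploit.
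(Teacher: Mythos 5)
Your proposal is correct and follows exactly the route the paper takes: the paper derives Equation~(\ref{degenerate path}) from Corollary~\ref{degenerate vectors}, identifies $\ell_{f_{\eta_t}}(\g)$ with $\log\Lambda_\g(\eta_t)$ via Proposition~\ref{functionHitchin}, and treats the lemma as an immediate consequence via the product rule and the positivity of $h(\rho)$. Your added remarks on the legitimacy of the identification of periods and on $h(\rho)>0$ are accurate but not points the paper dwells on.
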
 

Lemma \ref{hitchin length equation} implies that if  $v\in\TT_\rho \mathcal H_d(S)$ is a degenerate vector,
and we set $K=-\frac{D_\rho h(v)}{h(\rho)}$, then $D_\rho\log\Lambda_\g (v)=K\log\Lambda_\g(\rho)$
for all $\g\in\grf$. The next proposition, which is the key step in the proof of Proposition \ref{nondegenerate},
then 
guarantees that the derivative of the  trace function of every element is trivial in the direction $v$.

\begin{proposition}
\label{K=0 Hitchin} 
If $v\in\TT_\rho\mathcal H_d(S)$ and there exists $K\in\Real$ such that  
$$D_\rho\log\Lambda_\g (v)=K\log\Lambda_\g(\rho)$$ 
for all $\g\in\grf$, then $K=0$ and $D_\rho\tr_\g(v)=0 $ for all $\g\in\grf$.
\end{proposition}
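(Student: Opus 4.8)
The plan is to leverage the rigidity of Hitchin representations and the fact that the eigenvalue data essentially recovers the full conjugacy class. Suppose $v\in\TT_\rho\mathcal H_d(S)$ satisfies $D_\rho\log\Lambda_\g(v)=K\log\Lambda_\g(\rho)$ for all $\g\in\grf$. The first observation is that the spectral radius $\Lambda_\g(\rho)$ is, by Theorem \ref{hitchin anosov}(2), the top eigenvalue of the diagonalizable matrix $\rho(\g)$, so $\log\Lambda_\g$ is genuinely an analytic function on $\mathcal H_d(S)$, and the hypothesis says that infinitesimally the entire length spectrum is being scaled by the factor $(1+tK)$. I would first dispose of the constant $K$: the key point is that the Hitchin component contains the Fuchsian locus, on which the entropy $h$ equals $1$ and is \emph{critical} (the Fuchsian locus maximizes entropy, by the entropy rigidity results underlying the picture), and more robustly, one can use that $\sum_i \log\Lambda_i = 0$ in $\sln$ forces a normalization. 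Concretely, applying the hypothesis to $\g$ and to $\g^{-1}$, or exploiting that the sum of the logarithms of the moduli of all eigenvalues of $\rho(\g)$ vanishes (since $\rho(\g)\in\sln$), should pin down $K$.

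The cleaner route to $K=0$ is the following. Because $\rho(\g)$ and $\rho(\g^{-1})=\rho(\g)^{-1}$ are related, the smallest-modulus eigenvalue of $\rho(\g)$ is the reciprocal of $\Lambda_{\g^{-1}}(\rho)$. The hypothesis applied simultaneously to all $\g$ gives that both the largest and the smallest eigenvalue-logarithms scale by $K$. I would then argue that the trace $\tr_\g$, being a symmetric function of eigenvalues dominated by the extreme ones, cannot tolerate a uniform positive scaling without the representation leaving the component — but more precisely, I would use the \emph{cross-ratio} or the refined eigenvalue functions. The point is that the derivative $D_\rho\tr_\g(v)$ can be expressed, via the spectral decomposition, in terms of the derivatives of \emph{all} the eigenvalue-logarithms $\log\lambda_i(\rho(\g))$, not just the top one. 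So the real content is upgrading the hypothesis about the top eigenvalue to a statement about every eigenvalue.

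This upgrade is where I expect the main obstacle to lie, and it is the heart of the argument. The mechanism is to apply the hypothesis not to $\rho(\g)$ directly but to its action on exterior powers: the spectral radius of $\Lambda^k\rho(\g) = (\Lambda^k\rho)(\g)$ equals $\prod_{i=1}^k\lambda_i$, the product of the $k$ largest eigenvalue-moduli, and $\Lambda^k\rho$ is again (the composition of $\rho$ with the irreducible) a representation for which the same limit-map and Anosov machinery applies — indeed $\Lambda^k$ of a Hitchin representation remains projective Anosov, so $\log\Lambda_\g(\Lambda^k\rho)$ is controlled. By differentiating the relations for $k=1,\dots,d-1$ one recovers that \emph{each} $D_\rho\log\lambda_i(v)$ equals $K\log\lambda_i(\rho)$ (telescoping the products), hence that the whole diagonalized logarithmic spectrum scales by $K$. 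Since $\sum_i\log\lambda_i(\rho(\g))=0$ identically on the $\sln$-lift, differentiating gives $\sum_i D_\rho\log\lambda_i(v)=0$, while the scaling relation gives $K\sum_i\log\lambda_i(\rho)=0$; this is automatic and so does not yet force $K=0$, so one instead feeds the full scaling into the trace. Once every eigenvalue-logarithm scales by $K$, I would write $\tr_\g(\rho_t)\approx\sum_i \lambda_i(\rho)^{1+tK+o(t)}$ and combine the information across a generating set of $\grf$, together with the analytic trace-embedding into the character variety, to conclude that $v$ must be tangent to the direction that uniformly rescales all lengths; but the only such direction preserving the $\sln$-normalization and staying in the component is the trivial one, giving $K=0$ and then $D_\rho\tr_\g(v)=0$ for all $\g$. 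The delicate part throughout is justifying that the exterior-power representations inherit the needed Anosov/analyticity properties so that the eigenvalue-by-eigenvalue differentiation is legitimate, and that the top-eigenvalue hypothesis genuinely propagates to the interior of the spectrum rather than leaving the subdominant eigenvalues unconstrained.
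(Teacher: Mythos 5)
You correctly identify the heart of the matter---upgrading the hypothesis on the top eigenvalue to control of every eigenvalue---but the mechanism you propose for the upgrade does not work. The hypothesis constrains $D_\rho\log\Lambda_\g(v)$ only for the top eigenvalue of $\rho(\g)$ as $\g$ ranges over $\grf$; it says nothing about the spectral radius of $\Lambda^k\rho(\g)$, which is $|\lambda_1\cdots\lambda_k|$. The exterior power of a Hitchin representation is indeed again projective Anosov, but its length spectrum is simply not among the data constrained by the hypothesis, so there is no relation $D_\rho\log|\lambda_1\lambda_2|(v)=K\log|\lambda_1\lambda_2|(\rho)$ to telescope: the subdominant eigenvalues remain exactly as unconstrained as you feared. (On the Fuchsian locus every $\lambda_i$ is a power of $\lambda_1$, so such relations happen to hold there, which is perhaps why the step looks plausible; they fail off the Fuchsian locus.) The closing step---that the only direction uniformly rescaling all lengths while preserving the $\sln$-normalization is trivial---asserts the conclusion rather than proving it, and as you yourself observe, the constraint $\sum_i\log|\lambda_i|=0$ is compatible with every $K$.

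The paper's propagation mechanism is different. For coprime $\alpha,\beta$ one considers $\tr\big(\p_1(\rho(\alpha))\rho(\beta^n)\big)$, which by Lemma \ref{typk} is a limit of ratios $\lambda_1(\rho(\alpha^m\beta^n))/\lambda_1(\rho(\alpha^m))$ of top eigenvalues of genuine group elements and hence \emph{is} of log-type $K$ at $v$ by the hypothesis. Expanding $\rho(\beta^n)=\sum_k\lambda_k(\rho(\beta))^n\p_k(\rho(\beta))$ makes the subdominant eigenvalues of $\rho(\beta)$ enter with distinct exponential weights $t_k^n$, and the elementary Lemma \ref{technical} on exponential sums forces $D_\rho\log|\lambda_{k,\beta}|(v)=D_\rho\log|\lambda_{1,\beta}|(v)=K\log\Lambda_\beta(\rho)$ for every $k$; note that the common value is $K\log|\lambda_1|$, not $K\log|\lambda_k|$, which is not quite the scaling relation you aimed for. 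Only then does the comparison of $\beta$ with $\beta^{-1}$ bite: since $\lambda_d(\rho(\beta))=\lambda_1(\rho(\beta^{-1}))^{-1}$, one gets $K\log\Lambda_{\beta^{-1}}(\rho)=-K\log\Lambda_{\beta}(\rho)$ with both logarithms positive, whence $K=0$, all eigenvalue derivatives vanish, and $D_\rho\tr_\g(v)=0$. Repairing your argument would require replacing the exterior-power step with an argument of this kind.
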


The proof of Proposition \ref{nondegenerate}, and hence Theorem \ref{hitchin metric},
is then completed by applying the following standard lemma.

\begin{lemma}
\label{traces generate}
If $\rho\in \mathcal H_d(S)$, then $\{D_\rho\tr_\g\mid\g\in\pi_1(S)\}$ spans the cotangent space $\TT^*_\rho\mathcal H_d(S).$
\end{lemma}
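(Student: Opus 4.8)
The plan is to discard, for this particular lemma, all of the dynamical machinery of the paper and argue purely within invariant theory and the deformation theory of representations; the only input needed from the rest of the excerpt is the irreducibility of Hitchin representations. As stated, I would work in the lift of $\mathcal H_d(S)$ to a component of the $\sln$-character variety $X(\grf,\sln)$, where the functions $\tr_\gamma$ are genuinely defined. First I would record that, by Theorem \ref{hitchin anosov}(4), every $\rho\in\mathcal H_d(S)$ is irreducible; hence its centralizer in $\sln$ is the (finite) center, $\rho$ is a stable point, and $[\rho]$ is a smooth point of $X(\grf,\sln)$. Standard deformation theory then identifies $\TT_\rho\mathcal H_d(S)$ with the group cohomology $H^1(\grf,\mathfrak{sl}_d(\Real))$, where $\grf$ acts through $\ad\circ\rho$. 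Irreducibility forces $H^0=0$, so Poincar\'e duality for the surface group yields $\dim H^1=-\chi(S)\dim\sln=(2g-2)(d^2-1)$, matching the dimension in Hitchin's theorem and reconfirming smoothness. Consequently $\TT^*_\rho\mathcal H_d(S)$ may be identified with the Zariski cotangent space $\mathfrak m/\mathfrak m^2$, where $\mathfrak m$ is the maximal ideal of the local ring of $X(\grf,\sln)$ at $[\rho]$.

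With this identification in hand, the lemma reduces to a general algebraic principle: if the coordinate ring of an affine variety $V$ is generated as an $\Real$-algebra by a family of functions, then at any point the differentials of those generators span the Zariski cotangent space. Indeed, if $\{g_i\}$ generate $\Real[V]$, then expanding any element of $\Real[V]$ as a polynomial in the $g_i$ shows that the recentered functions $\{g_i-g_i([\rho])\}$ generate $\mathfrak m$ as an ideal, so their residues span $\mathfrak m/\mathfrak m^2$; and these residues are precisely the differentials $D_\rho g_i$. It therefore suffices to exhibit $\{\tr_\gamma\}$ as a generating set of $\Real[X(\grf,\sln)]$. This is exactly Procesi's classical theorem on invariants of tuples of matrices under simultaneous conjugation: the ring of $\sln$-invariant regular functions on $\hom(\grf,\sln)$ is generated by the trace functions $\tr_w$ of words $w$ in a fixed finite generating set of $\grf$. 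Since each such word is an element $\gamma\in\grf$, the family $\{\tr_\gamma\mid\gamma\in\grf\}$ contains a generating set, and the two steps combine to give the claim.

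The one genuinely delicate point, and where I would take the most care, is the first: matching the differential-geometric cotangent space of the manifold $\mathcal H_d(S)$ with the Zariski cotangent space of the character variety. This rests entirely on $[\rho]$ being a smooth point of the expected dimension, which irreducibility together with the cohomological count supplies. As an alternative avoiding Procesi, I could argue cohomologically: a direct computation gives $D_\rho\tr_\gamma([u])=\tr(u(\gamma)\rho(\gamma))$ for a cocycle $u\in Z^1(\grf,\mathfrak{sl}_d(\Real))$, well-defined on cohomology since the trace of a coboundary against $\rho(\gamma)$ vanishes, and one would then show that a class $[u]$ with $\tr(u(\gamma)\rho(\gamma))=0$ for all $\gamma$ must vanish. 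This promotes the vanishing of all such pairings to $[u]=0$ using the nondegeneracy of the trace form on $\mathfrak{sl}_d(\Real)$ and of the Atiyah--Bott--Goldman cup-product pairing on $H^1$. I would present the invariant-theoretic route as the main line, since it is shortest and most transparent, and leave the cohomological computation as a remark.
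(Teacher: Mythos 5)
Your proposal is correct and follows essentially the same route as the paper, which disposes of this lemma in two sentences by immersing $\mathcal H_d(S)$ (via irreducibility and Schur's Lemma) into the $\ms{SL}_d(\mathbb C)$-character variety and citing the ``standard facts'' from Lubotzky--Magid --- precisely the smoothness-at-irreducible-points and trace-functions-generate-the-coordinate-ring statements you spell out via $H^1(\grf,\mathfrak{sl}_d)$ and Procesi's theorem. The only stylistic difference is that the paper complexifies before invoking these facts, which is the cleanest way to handle the real-form subtleties your argument works around directly; your acknowledged ``delicate point'' (matching the manifold cotangent space with the Zariski cotangent space at the smooth point $[\rho]$) is exactly what that complexification plus absolute irreducibility is there to guarantee.
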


Since every Hitchin representation is absolutely irreducible, Schur's Lemma can be used to show
that $\mathcal H_d(S)$ immerses in the $\ms{SL}_d(\mathbb C)$-character variety 
$X(\pi_1(S), {\rm SL}_d(\mathbb C))$. Lemma \ref{traces generate} then follows from standard facts 
about $X(\pi_1(S), \ms{SL}_d(\mathbb C))$ (see Lubotzky-Magid \cite{lubotzky-magid}).

\medskip\noindent
{\em Proof of Proposition \ref{K=0 Hitchin}:}
It will be useful to introduce some notation. If $M$ is a real analytic manifold, 
an analytic function $f:M \rightarrow \mathbb R$  has \hbox{{\em log-type} $K$} at $v\in\ms T_uM$ 
if $f(u)\ne 0$ and
$$
{\rm D}_u{\log(|f|)}(v)=K\log(|f(u)|).
$$

Suppose that $A \in \sln$ has real  eigenvalues $\{\lambda_i(A)\}_{i=1}^n$ where
$$|\lambda_1(A)|  > |\lambda_2(A)|  > \ldots > |\lambda_m(A)|.$$
If $\p_i(A)$ is the projection onto the $\lambda_i(A)$-eigenspace parallel to the hyperplane spanned by
the other eigenspaces, then
\begin{equation}
A = \sum_{k=1}^m \lambda_k(A)\p_i(A).
\label{expansion}
\end{equation}

We say that two infinite order elements of $\pi_1(S)$ are {\em coprime} if they do not share a common power.
The following lemma is an elementary computation (see Benoist \cite[Cor 1.6]{benoist2} or \cite[Prop. 9.4]{BCLS}).

\begin{lemma}
\label{typk}
If $\alpha $ and $\beta$ are coprime elements of $\pi_1(S)$ and $\rho\in\mathcal H_d(S)$, then
$$
\tr\Big(\p_1\big(\rho(\alpha)\big)\p_1\big(\rho(\beta)\big)\Big)=
\lim_{n\to\infty} \frac{\lambda_1\big(\rho(\alpha^n\beta^n)\big)}{\lambda_1\big(\rho(\alpha^n)\big)\lambda_1\big(\rho(\beta^n)\big)}\ne 0$$
and
$$
\tr\Big(\p_1\big(\rho(\alpha)\big)\rho(\beta)\Big)=\lim_{n\to\infty}\frac{\lambda_1\big(\rho(\alpha^n\beta)\big)}{\lambda_1\big(\rho(\alpha^n)\big)}\ne 0$$
for all $\rho\in\mathcal H_d(S)$.
\end{lemma}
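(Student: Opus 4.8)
The plan is to reduce both identities to a single matrix-convergence statement and then read off the nonvanishing from the transversality of Labourie's limit map. Write $A=\rho(\alpha)$ and $B=\rho(\beta)$. By Theorem \ref{hitchin anosov}(2), $A$ and $B$ are diagonalizable over $\Real$ with simple spectrum, so the spectral decomposition (\ref{expansion}) applies and $\lambda_1(\rho(\alpha^n))=\lambda_1(A)^n$. Dividing (\ref{expansion}) for $A^n$ by $\lambda_1(A)^n$ and using $|\lambda_k(A)/\lambda_1(A)|<1$ for $k\ge 2$, I get $A^n/\lambda_1(A)^n\to\p_1(A)$ in operator norm, and likewise for $B$. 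Multiplying, $A^nB^n/\big(\lambda_1(A)^n\lambda_1(B)^n\big)\to\p_1(A)\p_1(B)$ and $A^nB/\lambda_1(A)^n\to\p_1(A)B$; both limits are rank-one operators.

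Next I extract the top eigenvalue. Writing $\p_1(A)=v_A\otimes\varphi_A$ with $v_A$ spanning $\xi^{(1)}_\rho(\alpha^+)$ and $\ker\varphi_A=\xi^{(d-1)}_\rho(\alpha^-)$, normalized by $\varphi_A(v_A)=1$, and similarly for $B$, a direct computation gives $\tr\big(\p_1(A)\p_1(B)\big)=\varphi_A(v_B)\,\varphi_B(v_A)$ and $\tr\big(\p_1(A)B\big)=\varphi_A(Bv_A)$. Any rank-one operator has a single nonzero eigenvalue, equal to its trace, which is simple and isolated from the eigenvalue $0$ of multiplicity $d-1$. Granting that these two traces are nonzero (shown below), continuity of a simple isolated eigenvalue under the convergence above forces the matrices on the left to have, for large $n$, a unique eigenvalue of largest modulus converging to the trace of the limit, while the remaining $d-1$ eigenvalues tend to $0$. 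Since $\lambda_1$ denotes the spectral radius, this yields $\lambda_1(\rho(\alpha^n\beta^n))/(\lambda_1(A)^n\lambda_1(B)^n)\to\tr(\p_1(A)\p_1(B))$ and $\lambda_1(\rho(\alpha^n\beta))/\lambda_1(A)^n\to\tr(\p_1(A)B)$, which are the two asserted limits.

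It remains to check nonvanishing, and here the coprimality hypothesis enters. The factor $\varphi_A(v_B)$ vanishes exactly when $\xi^{(1)}_\rho(\beta^+)\subset\xi^{(d-1)}_\rho(\alpha^-)$, i.e.\ when the flags at $\beta^+$ and $\alpha^-$ are not transverse, which by Theorem \ref{equivariant} happens only if $\beta^+=\alpha^-$; similarly $\varphi_B(v_A)=0$ only if $\alpha^+=\beta^-$. Because $\alpha$ and $\beta$ are coprime, their fixed-point pairs $\{\alpha^+,\alpha^-\}$ and $\{\beta^+,\beta^-\}$ in $\bgrf$ are disjoint, so both factors are nonzero. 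For the second trace, $\rho$-equivariance of $\xi_\rho$ gives that $Bv_A$ spans $\xi^{(1)}_\rho(\beta\cdot\alpha^+)$, so $\varphi_A(Bv_A)=0$ only if $\beta\cdot\alpha^+=\alpha^-$; an elementary argument, using that two infinite-order elements of $\grf$ sharing a boundary fixed point are coaxial, shows this would force $\beta$ to interchange $\alpha^+$ and $\alpha^-$, so that $\beta^2$ would be coaxial with $\alpha$, contradicting coprimality.

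The main obstacle is the eigenvalue-continuity step: the limiting operators are rank one and carry a degenerate eigenvalue $0$ of multiplicity $d-1$, so one cannot simply invoke continuity of ``the top eigenvalue.'' The argument is saved precisely by the fact that the surviving eigenvalue is simple and bounded away from $0$, which is exactly what the nonvanishing of the trace provides; accordingly the proof must establish nonvanishing first and only then pass to the limit of $\lambda_1$.
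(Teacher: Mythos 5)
Your argument is correct and matches the intended proof: the paper itself does not prove Lemma \ref{typk} but defers to Benoist and to \cite[Prop.\ 9.4]{BCLS}, and those arguments run exactly as you do --- proximality gives $A^n/\lambda_1(A)^n\to\p_1(A)$, transversality of the limit flags at distinct boundary points (available because coprime elements of $\pi_1(S)$ have disjoint fixed-point pairs) gives the nonvanishing of the two traces, and convergence of the simple nonzero eigenvalue of the rank-one limits then yields the stated limits of $\lambda_1$. The only terse spot is the case $\beta\cdot\alpha^+=\alpha^-$ for the second trace, but the coaxiality argument you sketch (applied to $\beta\alpha\beta^{-1}$ and then to $\beta^2$) does close it.
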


The following rather technical lemma plays a key role in the proof of Proposition \ref{K=0 Hitchin}.

\begin{lemma}
\label{technical}
Suppose that $\{a_p\}_{p=1}^q, \{u_p\}_{p=1}^{q} ,\{b_s\}_{s=1}^\infty$, and $\{ v_s\}_{s=1}^\infty$ are collections of real numbers 
so that   $\{|u_p|\}_{p=1}^q$ and $\{|v_s|\}_{s=1}^\infty$  are strictly decreasing, each $u_p$ is non-zero,  
$$\sum_{p=1}^q n a_p u^n_p = \sum^\infty_{s=1} b_sv_s^n$$
for all $n>0$,
and $\sum^\infty_{s=1} b_sv_s$ is absolutely convergent. Then, $a_p = 0$ for all $p$.
\end{lemma}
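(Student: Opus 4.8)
The plan is to argue by contradiction, comparing growth rates and exploiting the structural asymmetry between the two sides: the finite left-hand sum $L(n):=\sum_{p=1}^q n a_p u_p^n$ carries a polynomial factor $n$, whereas the right-hand series $R(n):=\sum_{s=1}^\infty b_s v_s^n$ is a pure exponential series. Suppose, contrary to the claim, that some $a_p$ is nonzero, and let $p_0$ be the least such index. First I would record the elementary asymptotics of the left side: since $\{|u_p|\}$ is strictly decreasing and each $u_p\neq 0$, dividing by $n u_{p_0}^n$ gives $L(n)/(n u_{p_0}^n)=a_{p_0}+\sum_{p>p_0}a_p(u_p/u_{p_0})^n\to a_{p_0}\neq 0$, because $|u_p/u_{p_0}|<1$ for $p>p_0$. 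In particular $|L(n)|^{1/n}\to|u_{p_0}|$, and, crucially, $|L(n)|/|u_{p_0}|^n\to\infty$ linearly in $n$.

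The heart of the argument, and the step I expect to be the main obstacle, is establishing the analogous but genuinely \emph{bounded} asymptotics for the infinite series $R(n)$, which is where the absolute convergence hypothesis must be used. I would first note that $R(n)$ converges absolutely for every $n\geq 1$: a short case analysis on $\lim_s|v_s|$ (using that $\{|v_s|\}$ is decreasing and $\sum_s|b_s v_s|<\infty$) shows that either $\sum_s|b_s|<\infty$ or the tail terms satisfy $|v_s|<1$, so $\sum_s|b_s v_s^n|<\infty$ in all cases. If every $b_s$ vanishes then $R\equiv 0$, and $L(n)=0$ already contradicts the asymptotics above; so let $s_0$ be the least index with $b_{s_0}\neq 0$. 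The key estimate is a tail bound: setting $r=|v_{s_0+1}|/|v_{s_0}|<1$, strict monotonicity gives $(|v_s|/|v_{s_0}|)^n\leq r^{n-1}(|v_s|/|v_{s_0}|)$ for $s>s_0$, whence
$$\left|\frac{R(n)}{v_{s_0}^n}-b_{s_0}\right|\leq\sum_{s>s_0}|b_s|\Big(\frac{|v_s|}{|v_{s_0}|}\Big)^n\leq\frac{r^{n-1}}{|v_{s_0}|}\sum_{s>s_0}|b_s v_s|\xrightarrow[n\to\infty]{}0.$$
Here the absolute convergence $\sum_s|b_s v_s|<\infty$ is exactly what tames the infinite tail. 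Consequently $R(n)/v_{s_0}^n\to b_{s_0}\neq 0$, so $|R(n)|/|v_{s_0}|^n$ stays bounded and $|R(n)|^{1/n}\to|v_{s_0}|$.

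Finally I would combine the two asymptotics. From $L(n)=R(n)$ together with $|L(n)|^{1/n}\to|u_{p_0}|$ and $|R(n)|^{1/n}\to|v_{s_0}|$ I obtain $|u_{p_0}|=|v_{s_0}|=:w>0$. Dividing the identity $L(n)=R(n)$ by $w^n$ then produces a contradiction: the left-hand quotient satisfies $|L(n)|/w^n=|L(n)|/|u_{p_0}|^n\to\infty$, while the right-hand quotient satisfies $|R(n)|/w^n=|R(n)|/|v_{s_0}|^n\to|b_{s_0}|$ and is therefore bounded. This forces $a_{p_0}=0$, contradicting the choice of $p_0$; hence $a_p=0$ for all $p$, as claimed.
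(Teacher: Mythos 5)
Your proof is correct and follows essentially the same route as the paper's: both arguments normalize by the dominant term $nu^n$ on the left, use the absolute convergence of $\sum b_sv_s$ to control the tail of the right-hand series, and exploit the fact that the linear factor $n$ on the left cannot be matched by the purely exponential right-hand side. Your write-up simply makes explicit (via the tail estimate with $r=|v_{s_0+1}|/|v_{s_0}|$ and the minimal-index contradiction) the boundedness claim that the paper's iteration leaves implicit.
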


\begin{proof} 
We may assume without loss of generality that each $b_s$ is non-zero.
We divide each side of the equality by $nu_1^n$, to see that 
$$a_1 =\lim_{n\to\infty}\sum^\infty_{s=1}\left(\frac{b_s}{n}\right)\frac{v_s^n}{u_1^n}$$
for all $n$. However, the right hand side of the equation can only be bounded as $n\to\infty$,
if $|v_1|\le |u_1|$. However, if $|v_1|\le |u_1|$, then the limit of the right hand side, as $n\to\infty$, must be 0
and we conclude that $a_1 = 0$.

We may iterate this procedure to conclude that $a_p = 0$ for all $p$.
\end{proof}

Suppose that  $\alpha, \beta \in \pi_1(S)$ are coprime.  We
consider the analytic function  \hbox{$F_n:\mathcal H_d(S)\to \mathbb R$} given by
$$F_n(\rho)= \frac{\tr\Big(\p_1(\rho(\alpha)) \rho(\beta^n)\Big)}{\lambda_1\big(\rho(\beta^n)\big)\tr\Big(\p_1\big(\rho(\alpha)\big)\p_1\big(\rho(\beta)\big)\Big)}.$$
Lemma \ref{typk} and the assumption of Proposition \ref{K=0 Hitchin}
imply that $F_n$ is of log-type $K$ at $v$ (see the proofs of Proposition 9.4 and Lemma 9.8 in \cite{BCLS}).
Using equation (\ref{expansion}) we have
$$\rho(\beta^n) = \sum_{k=1}^d \lambda_k\big(\rho(\beta)\big)^n\p_k\big(\rho(\beta)\big).$$
Thus, we can write $F_n$ as
$$F_n (\rho)= 1 + \sum_{k=2}^d \frac{\tr\Big(\p_1\big(\rho(\alpha)\big)\p_k\big(\rho(\beta)\big)\Big)}{\tr\Big(\p_1\big(\rho(\alpha)\big)\p_1\big(\rho(\beta)\big)\Big)}
\left(\frac{\lambda_k(\rho(\beta))}{\lambda_1(\rho(\beta))}\right)^n  = 1 + \sum_{k=2}^d f_k t_k^n$$
where 
$$f_k(\rho)=\frac{\tr\Big(\p_1\big(\rho(\alpha)\big)\p_k\big(\rho(\beta)\big)\Big)}{\tr\Big(\p_1\big(\rho(\alpha)\big)\p_1\big(\rho(\beta)\big)\Big)}\ne 0\qquad
\textrm{and}\qquad  t_k(\rho)=\left(\frac{\lambda_k(\rho(\beta))}{|\lambda_1(\rho(\beta))|}\right)\ne 0.$$
Since $F_n$ has log-type $K$ at $v$ and is  positive in some neighborhood of $\rho$, 
\begin{equation}
D_\rho F_n(v) =
\sum_{k=2}^d n f_k t_k^n \frac{\dot{t}_k}{t_k} + \sum_{k=2}^d \dot{f}_k t_k^n  =
K F_n(\rho)\log(F_n(\rho)),
\label{1}
\end{equation}
where $\dot{t}_k = D_\rho t_k(v)$ and $\dot{f}_k = D_\rho a_k(v)$.
In order to simplify the proof, we consider Equation (\ref{1}) for even powers.
Using the Taylor series expansion for $\log(1+x)$ and grouping terms
we have 
$$F_{2n}\log(F_{2n}) = \left(1 + \sum_{k=2}^d f_k t_k^{2n}\right)\log\left(1 + \sum_{k=2}^d f_k t_k^{2n}\right) = \sum_{s=1}^\infty c_s w_s^n$$
where $\{w_s\}$ is a strictly decreasing sequence of positive terms. 
We may again regroup terms to obtain
$$\sum_{k=2}^d  {2n}\left(\frac{f_k\dot{t}_k}{t_k}\right)t_k^{2n} =\sum_{s=1}^\infty c_sw_s^n-\sum_{k=2}^d\dot{f}_kt_k^{2n} =
\sum_{s=1}^\infty b_sv_s^n$$
where $\{v_s\}$ is a strictly decreasing sequence of positive terms.  So, letting $u_k = t_k^2$, we see that for all $n$
$$\sum_{k=2}^d  n\left(\frac{2 f_k\dot{t}_k}{t_k}\right)u_k^{n} = \sum_{s=1}^\infty b_sv_s^n.$$

Lemma \ref{technical} implies that $\frac{f_k\dot{t}_k}{t_k} = 0$ for all $k$, so $\dot{t}_k=0$ for all $k$.
Let $\lambda_{i,\beta}$ be the real-valued analytic function on $\mathcal H_d(S)$ 
given by $\lambda_{i,\beta}(\rho)=\lambda_i(\rho(\beta))$. Then,
$$ \frac{\dot{\lambda}_{k,\beta}\lambda_{1,\beta} - \dot{\lambda}_{1,\beta}\lambda_{k,\beta}}{\lambda_{1,\beta}^2}= 0.$$
So,
$$D_\rho(\log(|\lambda_{k,\beta}|))(v) = \frac{{\dot\lambda}_{k,\beta}}{\lambda_{k,\beta}}=
\frac{{\dot\lambda}_{1,\beta}}{\lambda_{1,\beta}} = 
D_\rho (\log(|\lambda_{1,\beta}|))(v) = K\log(|\lambda_{1,\beta}(\rho)|).$$
Since $\lambda_{d,\beta} = \frac{1}{\lambda_{1,\beta^{-1}}}$,
\begin{eqnarray*}
K\log(|\lambda_{1,\beta^{-1}}(\rho)|) &=& D_\rho  (\log(|\lambda_{1,\beta^{-1}}|))(v)  =
D_\rho  (\log(|\lambda_{d,\beta^{-1}}|))(v)\\
&=& -D_\rho( \log(|\lambda_{1,\beta}|))(v) = -K\log(|\lambda_{1,\beta}(\rho)|).
\end{eqnarray*}
Therefore, since $\log(|\lambda_{1,\beta^{-1}}(\rho)|)$ and $\log(|\lambda_{1,\beta}(\rho)|)$ are both positive,
$K = 0$, which implies that  $\dot\lambda_k(\beta) = 0$ for all $k$.

Since, $D_\rho\lambda_{i,\beta}(v)=0$ for all $i$ and all $\beta$, $D_\rho\tr_\beta=0$ for all 
$\beta\in\pi_1(S)$.
This completes the proof of Proposition \ref{K=0 Hitchin}, and hence Proposition \ref{nondegenerate} and
Theorem \ref{hitchin metric}.
\qed

\subsection{Rigidity theorems for Hitchin representations}

\subsubsection{Entropy rigidity}
Potrie and Sambarino recently showed that entropy is maximized only along the Fuchsian locus.
One may view this as an analogue of Bowen's celebrated result \cite{bowen-qf} that the topological
entropy of a quasifuchsian group is at least 1 and it is 1 if and only if the group is Fuchsian.

\begin{theorem}
\label{entropy maximal}
{\rm (Potrie-Sambarino \cite{potrie-sambarino})}
If $\rho\in \mathcal H_d(S)$, then $h(\rho)\le {2\over d-1}$. Moreover, 
if $h(\rho)={2\over d-1}$, then $\rho$ lies in the Fuchsian locus.
\end{theorem}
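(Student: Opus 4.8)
The plan is to reduce the statement to a convexity estimate for the pressure function, after pinning down the entropy of each simple-root reparametrization. Write $a_k(\g)=\log|\lambda_k(\rho(\g))|$ for the logarithms of the moduli of the eigenvalues of $\rho(\g)$, ordered so that $a_1>a_2>\cdots>a_d$ (Theorem \ref{hitchin anosov}(2) guarantees these are distinct and the trace-zero normalization gives $\sum_k a_k=0$), and let $\sigma_k(\g)=a_k(\g)-a_{k+1}(\g)>0$ be the $k$-th simple root. Each $\sigma_k$ is the logarithm of the ratio of the two largest eigenvalues of the exterior power representation $\Lambda^k\rho:\grf\to\mathsf{SL}(\Lambda^k\Real^d)$, which is again projective Anosov with hyperconvex limit curve. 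Applying Proposition \ref{flow orbit equivalent} to each $\Lambda^k\rho$ realizes $\sigma_k$, up to Liv\v sic cohomology, by a positive H\"older function on $\TT^1S$, so that $\PP$ and the entropies below all refer to the single geodesic flow $\phi^{\rho_0}$.

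First I would record the spectral identity $a_1=\tfrac1d\sum_{k=1}^{d-1}(d-k)\sigma_k$, which expresses the first fundamental weight of $\mathfrak{sl}_d$ in the basis of simple roots and follows at the level of periods from $\sum_k a_k=0$; hence the function $f_\rho$ of Proposition \ref{functionHitchin} (whose periods are the $a_1(\g)$) is Liv\v sic cohomologous to $\tfrac1d\sum_k(d-k)\sigma_k$. The hard input, and the main obstacle, is the theorem of Potrie--Sambarino \cite{potrie-sambarino} that for a Hitchin (indeed any such hyperconvex Borel-Anosov) representation the first simple-root reparametrization has topological entropy exactly $1$; applying it to each $\Lambda^k\rho$ gives $h(\sigma_k)=1$, equivalently $\PP(-\sigma_k)=0$ by Lemma \ref{entropia2}. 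This is where the Frenet/hyperconvexity property of the limit curve is essential and where all the real work lies; the remaining steps are soft.

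Next I would feed these into the convexity of $\PP$ (Ruelle \cite{ruelle}, recalled above). Set $p_k=\tfrac{2(d-k)}{d(d-1)}$, so that $\sum_{k=1}^{d-1}p_k=1$ and $\tfrac{2}{d-1}f_\rho$ is cohomologous to $\sum_k p_k\sigma_k$. Then
$$\PP\!\left(-\tfrac{2}{d-1}f_\rho\right)=\PP\!\left(\sum_{k=1}^{d-1}p_k(-\sigma_k)\right)\le \sum_{k=1}^{d-1}p_k\,\PP(-\sigma_k)=0.$$
Since $f_\rho>0$, the function $s\mapsto \PP(-sf_\rho)$ is strictly decreasing and vanishes exactly at $s=h(\rho)$ by Lemma \ref{entropia2}; comparing with the displayed inequality yields $h(\rho)\le \tfrac{2}{d-1}$.

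Finally, for the rigidity statement I would analyze the equality case. If $h(\rho)=\tfrac2{d-1}$ then the convexity inequality above is an equality, so by the strict convexity of $\PP$ transverse to cohomology directions --- precisely the characterization of degenerate vectors of the pressure form (Ruelle \cite{ruelle}, Parry--Pollicott \cite{parry-pollicott}, recalled before Corollary \ref{degenerate vectors}) --- the functions $\sigma_1,\dots,\sigma_{d-1}$ are pairwise Liv\v sic cohomologous. Equal periods then force $\log|\lambda_k(\rho(\g))|-\log|\lambda_{k+1}(\rho(\g))|$ to be independent of $k$ for every $\g$, i.e.\ the eigenvalues of each $\rho(\g)$ form a geometric progression, which is exactly the eigenvalue pattern of $\tau_d(g)$ for $g\in\mathsf{PSL}_2(\Real)$. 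Thus $\rho$ has the same Jordan projection as $\tau_d\circ\rho_0$ for a suitable $\rho_0\in\mathcal T(S)$, and the spectral rigidity of Hitchin representations forces $\rho$ into the Fuchsian locus. I expect the entropy bound itself to be immediate once $h(\sigma_k)=1$ is in hand, with the two genuinely substantial points being that simple-root theorem and the concluding step that a geometric eigenvalue spectrum along every element characterizes the Fuchsian locus.
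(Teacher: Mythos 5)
The paper you are reviewing does not actually prove this theorem: it is quoted from Potrie--Sambarino \cite{potrie-sambarino} with no internal argument, so there is no in-paper proof to compare against. That said, your outline correctly reconstructs the architecture of the proof in that reference: their main technical theorem is exactly that each simple-root entropy $h(\sigma_k)$ equals $1$, and the bound $h(\rho)\le 2/(d-1)$ with its rigidity statement is deduced from it via the weight identity $a_1=\tfrac1d\sum_k(d-k)\sigma_k$, convexity of $\PP$, and, in the equality case, vanishing of the variance forcing the $\sigma_k$ to be pairwise cohomologous, followed by Benoist's limit-cone theorem and Guichard's classification of Zariski closures of Hitchin representations to land in the Fuchsian locus. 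You are right that all the substance lives in the two inputs you defer, so what you have is a correct account of how Theorem A follows from Theorem B of \cite{potrie-sambarino}, not an independent proof of the stated result.

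Two points to tighten. First, Proposition \ref{flow orbit equivalent} applied to $\Lambda^k\rho$ produces a reparametrization whose periods are $\log\Lambda(\Lambda^k\rho(\gamma))=a_1(\gamma)+\cdots+a_k(\gamma)$, not $\sigma_k(\gamma)=a_k(\gamma)-a_{k+1}(\gamma)$; to realize the simple roots one instead uses the line bundles $\hom\big(\xi^{(k+1)}_\rho/\xi^{(k)}_\rho,\ \xi^{(k)}_\rho/\xi^{(k-1)}_\rho\big)$ furnished by the full Frenet flag (or writes $\sigma_k$ as an integer combination of the partial sums, at the cost of losing manifest positivity, which is harmless for the pressure computation since only $f_\rho$ need be positive there). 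Second, equality in the convexity inequality only yields that $\sigma_j-\sigma_k$ is Liv\v sic cohomologous to a \emph{constant}; you then need $\PP(-\sigma_j)=\PP(-\sigma_k)=0$ to conclude that the constant vanishes before you may speak of equal periods and a geometric eigenvalue progression. Both are routine repairs, and the final identification of the Fuchsian locus from the degenerate limit cone is, as you note, a genuine step requiring Guichard's classification (or an equivalent Zariski-closure argument) rather than a formal consequence of marked-length rigidity.
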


\noindent
{\bf Remarks:}
(1) Crampon \cite{crampon-rigidity} had earlier established that the entropy associated to Hilbert length (see
Section \ref{hilbert length})  of holonomies of strictly convex projective structures on closed hyperbolic
manifolds is maximal exactly at the representations into $\ms{SO}(d,1)$. In particular, the Hilbert length
entropy on $\mathcal H_3(S)$ is maximal exactly at the Fuchsian locus.

(2) Tengren Zhang \cite{zhang1,zhang2} showed that, for all $d$, there exist large families of
sequences of Hitchin representations with entropy converging to 0. Nie \cite{nie-simplicial}
had earlier constructed specific examples when $d=3$.

\subsubsection{Intersection number and marked length rigidity theorems}
One also obtains the following rigidity theorem for Hitchin representations with respect to the
intersection number. Notice that the definition of the renormalized intersection number $\JJ(\rho_1,\rho_2)$ 
for \hbox{$\rho_1\in \mathcal H_{d_1}(S)$} and \hbox{$\rho_2\in\mathcal H_{d_2}(S)$} makes sense
even if $d_1\ne d_2$, see Section \ref{hitchin entropy}. Moreover, if $f_1:\TT^1S\to\mathbb R$ and 
\hbox{$f_2:\TT^1S\to\mathbb R$} are positive H\"older
functions such that $\phi^{f_1}=U_{\rho_1}$ and $\phi^{f_2}=U_{\rho_2}$, then
$\JJ(\rho_1,\rho_2)=\JJ(f_1,f_2)$. In particular, see Lemma \ref{renormalized rigidity}, 
$\JJ(\rho_1,\rho_2)\ge 1$.

\begin{theorem}{\rm (\cite[Cor. 1.5]{BCLS})}
\label{hitch-rigid}
Let $S$ be a closed, orientable surface and let
$\rho_1\in \mathcal H_{d_1}(S)$ and $\rho_2\in \mathcal H_{d_2}(S)$ 
be two Hitchin representations such that  
$$
\JJ(\rho_1,\rho_2)=1.
$$
Then, either
\begin{enumerate}
\item $d_1=d_2$ and $\rho_1=\rho_2$ in $\mathcal H_{d_1}(S)$, or
\item there exists an element $\rho$ of the Teichm\"uller space $\mathcal T(S)$ so that
$\rho_1=\tau_{d_1}(\rho)$ and  $\rho_2=\tau_{d_2}(\rho)$.
\end{enumerate}
\end{theorem}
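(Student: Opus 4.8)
The plan is to convert the hypothesis $\JJ(\rho_1,\rho_2)=1$ into a proportionality of marked spectral radii, and then to reconstruct enough of the eigenvalue data of each representation from that single function to force the dichotomy. First I would apply Proposition \ref{renormalized rigidity} to the reparametrization functions $f_1=f_{\rho_1}$ and $f_2=f_{\rho_2}$ produced by Proposition \ref{functionHitchin}, whose periods are $\log\Lambda_\g(\rho_1)$ and $\log\Lambda_\g(\rho_2)$. Since $\JJ(\rho_1,\rho_2)=\JJ(f_1,f_2)=1$, the functions $h(\rho_1)f_1$ and $h(\rho_2)f_2$ are Liv\v sic cohomologous, and cohomologous H\"older functions have equal periods (Theorem \ref{livsic}), so
$$h(\rho_1)\log\Lambda_\g(\rho_1)=h(\rho_2)\log\Lambda_\g(\rho_2)\qquad\text{for every }\g\in\grf.$$
Equivalently $\log\Lambda_\g(\rho_1)=c\,\log\Lambda_\g(\rho_2)$ for all $\g$, with $c=h(\rho_2)/h(\rho_1)>0$. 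All remaining content is then a marked-spectral-radius rigidity statement: proportional spectral radius functions must come either from equal representations or from a shared Fuchsian point, for which one checks directly that $\log\Lambda_\g(\tau_d(\rho))=\tfrac{d-1}{2}\ell_\rho(\g)$, so that both normalized spectra equal the hyperbolic length spectrum $\ell_\rho$.

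Next I would recover the finer eigenvalue information of each $\rho_i$ from the family $\{\log\Lambda_\g\}_{\g}$ alone, running the computation behind Proposition \ref{K=0 Hitchin} globally rather than infinitesimally. For coprime $\alpha,\beta$, Lemma \ref{typk} realizes $\log\tr(\p_1(\rho(\alpha))\p_1(\rho(\beta)))$ as the limit of $\log\Lambda_{\alpha^n\beta^n}(\rho)-n\log\Lambda_\alpha(\rho)-n\log\Lambda_\beta(\rho)$, so feeding in the proportionality term by term yields $\log\tr(\p_1(\rho_1\alpha)\p_1(\rho_1\beta))=c\,\log\tr(\p_1(\rho_2\alpha)\p_1(\rho_2\beta))$, and the second identity of Lemma \ref{typk} gives the analogue for $\tr(\p_1(\rho(\alpha))\rho(\beta))$. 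Treating $n$ as a variable in the expansion $1+\sum_{k\ge2}f_k t_k^n$, with $t_k=\lambda_k/\lambda_1$ for $\rho(\beta)$, and separating exponentials by a Lemma \ref{technical}-type argument, I would recover all ratios $\lambda_k(\rho_i(\beta))/\lambda_1(\rho_i(\beta))$; combined with the spectral-radius data this reconstructs the entire real eigenvalue spectrum of each $\rho_i(\g)$, and hence $\tr_\g(\rho_i)$.

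Finally I would split on $c$. If $c=1$, the reconstruction gives $\lambda_k(\rho_1(\g))=\lambda_k(\rho_2(\g))$ for all $k$ and all $\g$; in particular $d_1=d_2$ and $\tr_\g(\rho_1)=\tr_\g(\rho_2)$ for every $\g$, whence $\rho_1=\rho_2$, since traces separate points of the character variety and $\{D_\rho\tr_\g\}$ spans the cotangent space by Lemma \ref{traces generate}; this is alternative (1). If $c\ne1$, a global rescaling of the log-eigenvalues by $c$ must be compatible with the fixed combinatorics of a Hitchin spectrum, which I would argue forces the rigid arithmetic-progression spectrum $\log\lambda_k(\rho(\g))=\tfrac{d+1-2k}{2}\ell(\g)$ characteristic of the Veronese (symmetric-power) construction. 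Potrie-Sambarino entropy rigidity (Theorem \ref{entropy maximal}), which detects the Fuchsian locus by the value $h=2/(d-1)$, provides the clean identification of this case, yields $c=(d_1-1)/(d_2-1)$, and produces the common $\rho\in\mathcal T(S)$ with $\rho_i=\tau_{d_i}(\rho)$; this is alternative (2).

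The hard part will be the reconstruction step: spectral radius records only one eigenvalue per element, so extracting the full spectrum from $\g\mapsto\log\Lambda_\g$ demands careful control of the asymptotics of $\alpha^n\beta$, exploiting the positivity and transversality of the Hitchin limit maps together with the exponential-separation of Lemma \ref{technical}. Making precise that a nontrivial global rescaling of such a spectrum can occur only along the Fuchsian locus --- i.e. the $c\ne1$ analysis --- is the most delicate point, and is where the rigid structure of the Veronese embedding and the entropy rigidity theorem are essential.
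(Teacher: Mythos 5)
Your first step is exactly the paper's: Proposition \ref{renormalized rigidity} plus Liv\v sic's theorem converts $\JJ(\rho_1,\rho_2)=1$ into $h(\rho_1)\log\Lambda_\g(\rho_1)=h(\rho_2)\log\Lambda_\g(\rho_2)$ for all $\g$, and everything reduces to a proportional marked-spectral-radius rigidity statement. From there, however, you diverge from the route the paper indicates. The cited proof in \cite{BCLS} handles the proportionality by applying Benoist's theorem \cite[Thm.\ 1.2]{limite} on limit cones to the product representation $\rho_1\times\rho_2$ (the linear relation among Jordan projections forces the limit cone into a hyperplane, so the product cannot be Zariski dense in the product of the Zariski closures), and then invokes Guichard's classification of the possible Zariski closures of Hitchin images to identify the resulting isogeny and land in alternatives (1) or (2). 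Your plan replaces this with a direct reconstruction of the full eigenvalue spectrum from $\g\mapsto\log\Lambda_\g$ via Lemma \ref{typk} and exponential-separation arguments.

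That replacement has two genuine gaps. First, when $c\neq 1$ the identity you need is $1+A_n=(1+B_n)^c$ for all $n$, where $A_n=\sum_{k\ge 2}f^{(1)}_k(t^{(1)}_k)^n$ and $B_n=\sum_{k\ge 2}f^{(2)}_k(t^{(2)}_k)^n$ are \emph{finite} exponential sums. For non-integer $c$ the binomial expansion of $(1+B_n)^c$ produces infinitely many distinct exponentials $t^{(2)}_{j_1}\cdots t^{(2)}_{j_m}$, so matching against the finitely many $t^{(1)}_k$ requires showing that almost all of these terms cancel, which is precisely the geometric-progression rigidity you are trying to establish; Lemma \ref{technical} is built for the linear (infinitesimal) situation of Proposition \ref{K=0 Hitchin} and does not deliver this. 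You have correctly identified this as the delicate point, but no argument is given, and it is not a routine extension of the $c=1$ case (which does work by linear independence of exponentials, granting the nonvanishing of the $f_k$). Second, even granting the arithmetic-progression spectrum $\log\lambda_k(\rho_i(\g))=\tfrac{d_i+1-2k}{2}\ell(\g)$, you still have to produce the common point $\rho\in\mathcal T(S)$, i.e.\ show that $\ell$ is the marked length spectrum of a hyperbolic structure and that $\rho_i=\tau_{d_i}(\rho)$; Theorem \ref{entropy maximal} cannot be applied here as stated, because you cannot compute $h(\rho_i)$ from the progression without already knowing what $\ell$ is. So the proposal is a correct opening move followed by a sketch whose two hardest steps --- the $c\neq1$ spectral reconstruction and the identification of the Fuchsian point --- are asserted rather than proved; these are exactly the steps that Benoist's limit cone theorem and Guichard's classification are imported to handle in the actual proof.
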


The proof of Theorem \ref{hitch-rigid} makes use of general rigidity results in the 
Thermodynamic Formalism and a result of Guichard \cite{guichard-zc}
classifying Zariski closures of images of Hitchin representations.

As an immediate corollary, one
obtains a marked length rigidity theorem where one uses the logarithm of the spectral radius as a notion of length.

\begin{corollary}
\label{rigid-cor}
If $\rho_1,\rho_2\in \mathcal H_d(S)$, then
$$\frac{h(\rho_1)}{h(\rho_2)}\sup_{\gamma\in\grf}\frac{\Lambda_\gamma(\rho_1)}{\Lambda_\gamma(\rho_2)}\geq1$$ 
with equality if and only  if 
there exists $g\in\ms{GL}_d(\mathbb R)$ such that $g\rho_1 g^{-1}= \rho_2.$ In particular, if $\Lambda_\gamma(\rho_1)=\Lambda_\gamma(\rho_2)$ for all $\gamma\in\grf,$ 
then $\rho_1$ and $\rho_2$ are conjugate in {$\ms{GL}_d(\mathbb R).$}
\end{corollary}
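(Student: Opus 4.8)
The plan is to read the corollary as a marked-length-spectrum statement, in which the length of $\g$ is $\log\Lambda_\g(\rho)$, and to deduce everything from Proposition \ref{renormalized rigidity} together with Theorem \ref{hitch-rigid}. Note first that for every infinite-order $\g\in\grf$ one has $\log\Lambda_\g(\rho)>0$, since $\rho(\g)$ has distinct real eigenvalues and $\Lambda_\g(\rho)>1$ by Theorem \ref{hitchin anosov}, so all ratios below are ratios of positive quantities. Set $L=\sup_{\g\in\grf}\frac{\log\Lambda_\g(\rho_1)}{\log\Lambda_\g(\rho_2)}$. Recalling from Section \ref{hitchin entropy} that
$$\II(\rho_2,\rho_1)=\lim_{T\to\infty}\frac{1}{\#R_T(\rho_2)}\sum_{[\g]\in R_T(\rho_2)}\frac{\log\Lambda_\g(\rho_1)}{\log\Lambda_\g(\rho_2)},$$
each summand is bounded above by $L$, so $\II(\rho_2,\rho_1)\le L$; multiplying by $h(\rho_1)/h(\rho_2)$ gives $\JJ(\rho_2,\rho_1)\le \frac{h(\rho_1)}{h(\rho_2)}L$. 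Since $\JJ(\rho_2,\rho_1)\ge1$ by Proposition \ref{renormalized rigidity}, the asserted inequality $\frac{h(\rho_1)}{h(\rho_2)}L\ge1$ follows at once.

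For the equality case, suppose $\frac{h(\rho_1)}{h(\rho_2)}L=1$. Then the chain $1\le\JJ(\rho_2,\rho_1)\le\frac{h(\rho_1)}{h(\rho_2)}L=1$ forces $\JJ(\rho_2,\rho_1)=1$, and I would invoke Theorem \ref{hitch-rigid} with $d_1=d_2=d$: its conclusion (1) gives $\rho_1=\rho_2$ in $\mathcal H_d(S)$, while conclusion (2) gives $\rho_1=\tau_d(\rho)=\rho_2$ for some $\rho\in\mathcal T(S)$; in either case $\rho_1$ and $\rho_2$ determine the same point of $\mathcal H_d(S)$, which (as $\mathcal H_d(S)$ is a space of conjugacy classes) means precisely that they are conjugate in $\mathsf{GL}_d(\mathbb R)$. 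Conversely, if $g\rho_1 g^{-1}=\rho_2$ for some $g\in\mathsf{GL}_d(\mathbb R)$, then $\Lambda_\g(\rho_1)=\Lambda_\g(\rho_2)$ for all $\g$ because spectral radius is a conjugacy invariant; hence $L=1$, the two marked length spectra coincide, so $R_T(\rho_1)=R_T(\rho_2)$ for every $T$ and therefore $h(\rho_1)=h(\rho_2)$, giving $\frac{h(\rho_1)}{h(\rho_2)}L=1$.

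The final ``in particular'' assertion is the special case in which $\Lambda_\g(\rho_1)=\Lambda_\g(\rho_2)$ for all $\g$: then $L=1$ and, exactly as above, $h(\rho_1)=h(\rho_2)$, so equality holds and $\rho_1$, $\rho_2$ are conjugate in $\mathsf{GL}_d(\mathbb R)$. The substantive content is entirely carried by Theorem \ref{hitch-rigid}, so I expect no genuine obstacle; the only points requiring care are the elementary passage from the pointwise bound by $L$ to the bound $\II(\rho_2,\rho_1)\le L$ in the limit (vacuous if $L=\infty$), and the bookkeeping identifying coincidence of points of $\mathcal H_d(S)$ with $\mathsf{GL}_d(\mathbb R)$-conjugacy of representatives.
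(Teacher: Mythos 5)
Your proposal is correct and follows essentially the same route the paper intends: the inequality is exactly $\JJ(\rho_2,\rho_1)\ge 1$ from Proposition \ref{renormalized rigidity} combined with the termwise bound $\II(\rho_2,\rho_1)\le L$, and the equality case is delegated to Theorem \ref{hitch-rigid}, which is precisely why the paper labels this an ``immediate corollary.'' The one point worth flagging is that you (rightly) work with $L=\sup_\gamma \log\Lambda_\gamma(\rho_1)/\log\Lambda_\gamma(\rho_2)$, whereas the statement as printed omits the logarithms; the printed form is evidently a typo, since replacing $\gamma$ by $\gamma^n$ shows that $\sup_\gamma \Lambda_\gamma(\rho_1)/\Lambda_\gamma(\rho_2)$ is $+\infty$ as soon as a single ratio exceeds $1$, and the surrounding text (``one uses the logarithm of the spectral radius as a notion of length'') confirms the log version is intended.
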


Bridgeman, Canary and Labourie have recently established that it suffices to consider lengths of
simple closed curves.

\begin{theorem}{\rm (Bridgeman-Canary-Labourie \cite{BCL})}
\label{simple length}
If $\rho_1,\rho_2\in \mathcal H_d(S)$ and  $\Lambda_\alpha(\rho_1)=\Lambda_\alpha(\rho_2)$ for all $\alpha\in\grf$
which are represented by simple closed curves, 
then $\rho_1$ and $\rho_2$ are conjugate in $\ms{PGL}_d(\mathbb R).$
\end{theorem}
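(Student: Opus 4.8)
The plan is to recover the whole representation up to conjugacy directly from the simple length spectrum, using Dehn twists to convert simple data into the spectral projection data that reconstructs the matrices $\rho(\gamma)$. The starting observation is that the mapping class group preserves simplicity: if $\alpha$ and $\gamma$ are simple closed curves, then every $T_\alpha^n(\gamma)$ is again a simple closed curve, so the hypothesis gives
$$\Lambda_{T_\alpha^n(\gamma)}(\rho_1)=\Lambda_{T_\alpha^n(\gamma)}(\rho_2)\qquad\text{for all }n.$$
Since $\alpha$ is simple we also have $\Lambda_\alpha(\rho_1)=\Lambda_\alpha(\rho_2)$, so the normalized quantities $\Lambda_{T_\alpha^n(\gamma)}(\rho)/\Lambda_\alpha(\rho)^n$ agree for $\rho_1$ and $\rho_2$ for every $n$.

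First I would run the asymptotic analysis already used in Lemma \ref{typk} and in the proof of Proposition \ref{K=0 Hitchin}. As $n\to\infty$ the geodesic representing $T_\alpha^n(\gamma)$ wraps around $\alpha$ more and more, so its holonomy is asymptotically $\rho(\alpha)^n$ up to bounded correction; extracting the leading term gives a limit of the form
$$\lim_{n\to\infty}\frac{\Lambda_{T_\alpha^n(\gamma)}(\rho)}{\Lambda_\alpha(\rho)^n}=\tr\big(\p_1(\rho(\alpha))\,\rho(\gamma)\big),$$
where $\p_1(\rho(\alpha))$ is the rank-one projection onto the attracting line $\xi_\rho^{(1)}(\alpha^+)$ along the repelling hyperplane $\xi_\rho^{(d-1)}(\alpha^-)$. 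Because the left-hand sequences agree for $\rho_1$ and $\rho_2$, their limits agree, so
$$\tr\big(\p_1(\rho_1(\alpha))\,\rho_1(\gamma)\big)=\tr\big(\p_1(\rho_2(\alpha))\,\rho_2(\gamma)\big)$$
for every pair of simple closed curves $\alpha,\gamma$ (and, iterating the twisting in both factors, the same for the mixed projection traces $\tr(\p_1(\rho(\alpha))\p_1(\rho(\gamma)))$).

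The reconstruction step, which I expect to be the main obstacle, is to show that these invariants determine the conjugacy class. Writing $\p_1(\rho(\alpha))=v_\alpha\otimes\varphi_\alpha$ as a rank-one tensor, I would use the irreducibility of Hitchin representations (Theorem \ref{hitchin anosov}) together with the richness of the limit set to produce finitely many simple closed curves $\alpha_1,\dots,\alpha_N$ whose projections $\p_1(\rho(\alpha_i))$ span $\operatorname{End}(\Real^d)$. Non-degeneracy of the trace pairing then lets the numbers $\tr(\p_1(\rho(\alpha_i))\rho(\gamma))$ recover the matrix $\rho(\gamma)$ for every simple $\gamma$, consistently in a single frame fixed by normalizing $\rho_2$ against $\rho_1$ on the spanning configuration. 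Since the standard generators of $\grf$ are represented by simple closed curves, this reconstructs $\rho_1$ and $\rho_2$ as the same representation in a common frame, giving the desired conjugacy in $\ms{GL}_d(\Real)$. The hard part is precisely the spanning/general-position claim for projections of simple closed curves and the bookkeeping of the global frame; an alternative endgame, should the matrix reconstruction prove delicate, is to extract instead the cross-ratio values attached to pairs of simple curves, argue that they determine the cross-ratio $b_\rho$ and hence $\Lambda_\gamma(\rho)$ for all $\gamma$, and conclude by the marked length rigidity of Corollary \ref{rigid-cor}.
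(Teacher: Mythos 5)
First, a caveat: this survey does not actually prove Theorem \ref{simple length}; it is quoted from Bridgeman--Canary--Labourie \cite{BCL} (listed as ``in preparation''), and the only indication given of the argument is the remark that it uses Labourie's equivariant Frenet map into the full flag variety together with the Fock--Goncharov theory of positive representations. Your overall strategy --- twist by $T_\alpha^n$ to stay inside the simple length spectrum, extract the correlation numbers $\tr\big(\p_1(\rho(\alpha))\rho(\gamma)\big)$ and $\tr\big(\p_1(\rho(\alpha))\p_1(\rho(\gamma))\big)$ from eigenvalue asymptotics as in Lemma \ref{typk}, and then argue that these data rigidify the representation --- is genuinely in the spirit of the actual proof. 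But as written it has two concrete problems.

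First, the asymptotic identity $\lim_n \Lambda_{T_\alpha^n(\gamma)}(\rho)/\Lambda_\alpha(\rho)^n=\tr\big(\p_1(\rho(\alpha))\rho(\gamma)\big)$ is only correct when $i(\alpha,\gamma)=1$, so that $T_\alpha^n(\gamma)$ is conjugate to $\alpha^n\gamma$. If $i(\alpha,\gamma)=k\geq 2$ the twisted curve is a word with $\alpha^{\pm n}$ inserted at each of the $k$ intersection points, the correct normalization is $\Lambda_\alpha(\rho)^{nk}$, and the limit is a product of $k$ correlation numbers attached to a decomposition of $\gamma$; if $i(\alpha,\gamma)=0$ you get nothing. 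So the data you actually control is the family of (absolute values of --- the hypothesis only constrains spectral radii, so signs need separate care) correlation numbers for intersection-one pairs, which is thinner than ``every pair of simple closed curves.'' Second, and more seriously, the step you yourself flag as the main obstacle is precisely where the proof lives, and it is not merely bookkeeping. The rank-one projections available to you are constrained to pairs $\big(\xi_\rho^{(1)}(\alpha^+),\xi_\rho^{(d-1)}(\alpha^-)\big)$ with $\alpha$ simple; by Birman--Series these fixed points form a nowhere dense subset of $\bgrf$, so spanning of $\operatorname{End}(\Real^d)$ does not follow from irreducibility alone and needs an argument. Even granting spanning, equality of the numbers $\tr\big(\p_1(\rho_1(\alpha_i))\rho_1(\gamma)\big)=\tr\big(\p_1(\rho_2(\alpha_i))\rho_2(\gamma)\big)$ does not produce a single $g\in\ms{GL}_d(\Real)$: one must first show that the two configurations of projections $\{\p_1(\rho_1(\alpha_i))\}$ and $\{\p_1(\rho_2(\alpha_i))\}$ are themselves simultaneously conjugate, which requires matching all higher correlation numbers and a separate rigidity statement for configurations of flags. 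This is exactly the point at which \cite{BCL} invokes the full Frenet curve (not just the line and hyperplane components $\xi^{(1)}$, $\xi^{(d-1)}$ that your sketch uses) and Fock--Goncharov positivity; the same issue afflicts your alternative endgame, since determining the cross-ratio, and hence $\Lambda_\gamma(\rho)$ for \emph{all} $\gamma$ as required by Corollary \ref{rigid-cor}, from its values at simple-curve fixed points is again a density-plus-positivity argument, not a formal consequence. In short: the setup is right and matches the announced proof in outline, but the heart of the theorem is left unproved.
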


\noindent
{\bf Remarks:} Burger \cite{burger} introduced a renormalized intersection number between 
convex cocompact representations into rank one Lie groups and proved an analogue of Theorem \ref{hitch-rigid} in that setting. One should compare Corollary \ref{rigid-cor}  and Theorem \ref{simple length}
with the marked length spectrum rigidity theorem of Dal'bo-Kim \cite{dalbo-kim} for Zariski dense representations. 
Both Dal'bo-Kim \cite{dalbo-kim} and Theorem \ref{hitch-rigid} rely crucially  on work of Benoist \cite[Thm. 1.2]{limite}.
However, the proof of Theorem \ref{simple length} uses Labourie's equivariant Frenet map into the flag variety, 
see Theorem \ref{equivariant}, and the theory of positive representations developed
by Fock and Goncharov \cite{fock-goncharov}.

\subsection{An alternate length function}
\label{hilbert length}
Throughout the section, we have used the logarithm of the spectral radius as a  notion of length.
It is also quite natural to consider the length of $\rho(\gamma)$ to be
$$\ell_{\ms H}(\rho(\gamma))=\log\Lambda(\rho(\gamma))+\log\Lambda(\rho(\gamma^{-1})).$$
For example, if $\rho\in \mathcal H_3(S)$, then $\rho$ is the holonomy of a convex projective structure on $S$,
and $\ell_{\ms H}(\rho(\gamma))$ is the translation length of $\gamma$ in the associated Hilbert metric on $S$.

Sambarino \cite{sambarino-quantitative} also proves that there is a reparametrization of $\TT^1S$ whose periods
are  given by $\ell_{\ms H}(\rho(\gamma))$.

\begin{proposition}
{\rm (Sambarino \cite[Thm. 3.2, \S5]{sambarino-quantitative})}
If $\rho\in\mathcal H_d(S)$, then there exists a positive H\"older function $f_\rho^{\ms H}:\TT^1S\to(0,\infty)$
such that 
$$\int_{[\g]}f_\rho^{\ms H}=\ell_{\ms H}(\rho(\gamma))$$
for all $\g\in\grf$.
\end{proposition}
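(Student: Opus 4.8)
The plan is to deduce the statement from Proposition \ref{functionHitchin} by exploiting the time-reversing symmetry of the geodesic flow. Since $\ell_{\ms H}(\rho(\gamma))=\log\Lambda(\rho(\gamma))+\log\Lambda(\rho(\gamma^{-1}))$, and Proposition \ref{functionHitchin} already supplies a positive H\"older function $f_\rho$ with $\int_{[\g]}f_\rho=\log\Lambda_\g(\rho)$, it suffices to produce a companion positive H\"older function whose period over $[\g]$ equals $\log\Lambda(\rho(\gamma^{-1}))$; the sum of the two functions will then be the desired $f_\rho^{\ms H}$.

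First I would introduce the geodesic flip $\iota:\TT^1S\to\TT^1S$, namely the involution $v\mapsto -v$ on unit tangent vectors, which in the Hopf parametrization of $\TT^1\H^2$ is $(x,y,t)\mapsto(y,x,-t)$. Using the antisymmetry of the Busemann cocycle one checks that $\iota$ is $\pi_1(S)$-equivariant, so it descends to $\TT^1S$, and that it conjugates the geodesic flow to its inverse, $\iota\circ\phi_t=\phi_{-t}\circ\iota$. In particular $\iota$ is a real-analytic (hence H\"older) homeomorphism carrying the closed orbit associated to $\g$ onto the closed orbit associated to $\g^{-1}$, and preserving its period, since the two orbits project to the same closed geodesic on $S$ traversed in opposite directions.

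The key computation is then that, for any H\"older $f$, the period of $f\circ\iota$ over $[\g]$ equals the period of $f$ over $[\g^{-1}]$. Writing $\int_0^{\ell}f(\iota(\phi_s x))\,ds$ for the integral along the $\g$-orbit, where $\ell=\ell_{\rho_0}(\g)$, and inserting $\iota\circ\phi_s=\phi_{-s}\circ\iota$, the substitution $s\mapsto \ell-s$ together with periodicity of the orbit (recall $\ell_{\rho_0}(\g^{-1})=\ell_{\rho_0}(\g)$) turns this into the integral of $f$ along the $\g^{-1}$-orbit. Applying this to $f=f_\rho$ gives $\int_{[\g]}f_\rho\circ\iota=\log\Lambda(\rho(\gamma^{-1}))$. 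I would therefore set $f_\rho^{\ms H}=f_\rho+f_\rho\circ\iota$, which is positive as a sum of positive functions and H\"older as a sum of a H\"older function and its composition with the smooth map $\iota$, and which satisfies $\int_{[\g]}f_\rho^{\ms H}=\log\Lambda(\rho(\gamma))+\log\Lambda(\rho(\gamma^{-1}))=\ell_{\ms H}(\rho(\gamma))$ for all $\g\in\grf$, as required.

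I do not expect a serious obstacle: the construction is essentially a symmetrization of Proposition \ref{functionHitchin}. The only point requiring care is the bookkeeping of the preceding paragraph, namely verifying equivariance of the flip and confirming that the time reversal identifies the $\g$-orbit with the $\g^{-1}$-orbit without changing the total integral over a period. An alternative, equally short route bypasses the flip: the contragredient representation $\rho^*$, defined by $\rho^*(\g)=\bigl(\rho(\g)^{\mathsf T}\bigr)^{-1}$, is again Hitchin and has the same eigenvalues as $\rho(\g^{-1})$, so $\Lambda(\rho^*(\g))=\Lambda(\rho(\gamma^{-1}))$; applying Proposition \ref{functionHitchin} to $\rho^*$ yields the companion function directly, and one sets $f_\rho^{\ms H}=f_\rho+f_{\rho^*}$. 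This variant is marginally less self-contained, as it requires knowing that duality preserves the Hitchin component, which is why I would present the flip argument as the main one.
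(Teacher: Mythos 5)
Your argument is correct, but it is not the route the paper takes. The paper builds $f_\rho^{\ms H}$ directly from the limit map via the cross-ratio $[\varphi,\psi,v,w]=\varphi(v)\psi(w)/\varphi(w)\psi(v)$ on $(\Real^d)^*\times\Real^d$: it sets $\kappa_\eta((x,y,z),t)=\log\bigl|[\xi_\eta^{(d-1)}(x),\xi_\eta^{(d-1)}(z),\xi_\eta^{(1)}(y),\xi_\eta^{(1)}(u_t(x,y,z))]\bigr|$, invokes Labourie's result that $t\mapsto\kappa_\eta((x,y,z),t)$ is an increasing homeomorphism of $\Real$ to guarantee positivity after averaging and differentiating, and reads off the periods from the identity $[g_-,(g^{-1})_-,v,gv]=\Lambda(g)\Lambda(g^{-1})$ for bi-proximal $g$. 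Your symmetrization $f_\rho^{\ms H}=f_\rho+f_\rho\circ\iota$ is shorter and makes positivity automatic, at the cost of quoting Proposition \ref{functionHitchin}; the flip bookkeeping is sound ($\iota$ is intrinsically $v\mapsto -v$, hence commutes with isometries and descends to $\TT^1S$, anti-commutes with $\phi_t$, and carries the $\g$-orbit to the $\g^{-1}$-orbit of the same period, whence $\int_{[\g]}f\circ\iota=\int_{[\g^{-1}]}f$ by the change of variables you describe). What the paper's construction buys is an explicit formula in terms of $\xi_\rho$, in exact parallel with the Teichm\"uller-space case and with manifest analytic dependence on the limit map; your $f_\rho^{\ms H}$ serves equally well for the later claim that $\Phi^{\ms H}$ admits locally analytic lifts, since $\iota$ is a fixed real-analytic involution and the analytic variation of $f_\rho$ is established separately, and by Liv\v sic's theorem your function is in any case Liv\v sic cohomologous to the paper's. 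Your alternative via the contragredient representation is also legitimate: the paper itself notes that $\hat\sigma$ preserves $\mathcal H_d(S)$, and $\Lambda(\hat\sigma(\rho)(\g))=\Lambda(\rho(\g^{-1}))$.
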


We give a proof which uses a cross ratio to construct $f_\rho^{\ms H}$ from the limit map $\xi_\rho$, as is done in the Teichm\"uller setting.
It is adapted from the construction given in section 3 of Labourie \cite{labourie-energy}.

\begin{proof}
Given linear forms $\varphi, \psi\in(\Real^d)^*$ and vectors $v,w\in\Real^d$ such that $v\notin\ker\psi$ and $w\notin\ker\varphi,$ 
define the \emph{cross-ratio} 
$$[\varphi,\psi,v,w]=\frac{\varphi(v)\psi(w)}{\varphi(w)\psi(v)}.$$ 
Note that the cross ratio only depends on the projective classes of $\varphi$, $\psi$, $v$, and $w$,
and is invariant under $\psln.$ Moreover, if $g\in\psln$ is bi-proximal and $v\notin\ker g_-\cup\ker (g^{-1})_-$, then 
\begin{equation}\label{periods}[g_-,(g^{-1})_-,v,gv]=\Lambda(g)\Lambda(g^{-1})\end{equation}
where $g_-$ is a linear functional whose kernel is the repelling hyperplane of $g$.

Theorem \ref{equivariant} provides a $\rho$-equivariant map $\xi_\rho:\partial_\infty\H^2\to\mathscr F.$ 
Define \hbox{$\kappa_\eta:\TT^1\H^2\times\Real\to\Real$} by 
$$\kappa_\eta((x,y,z),t) = \log\left|[\xi_\eta^{(d-1)}(x),\xi_\eta^{(d-1)}(z), \xi_\eta^{(1)}(y),\xi_\eta^{(1)}(u_t(x,y,z))]\right|$$ 
where  $u_t$ is determined by $\phi_t(x,y,z) = (x, u_t(x,y,z), z).$ Work of Labourie \cite[\S 3]{labourie-energy} implies that $t\mapsto \kappa_\eta((x,y,z),t)$ is an increasing homeomorphism of $\Real,$ so averaging $\kappa_\eta$ and taking derivatives as before 
provides the desired function $f_\eta^\ms H:\TT^1S\to(0,\infty)$.
Equation (\ref{periods}) implies that $f_\eta^\ms{H}$ has the desired periods.
\end{proof}

We may again obtain a  thermodynamic mapping $\Phi^\ms{H}:\mathcal H_d(S)\to\mathcal H(\TT^1S)$  defined by
$$\eta\mapsto[-h(f^\ms{\ms H}_\eta)f^\ms{H}_\eta].$$
One can use the same arguments as above to show that $\Phi^\ms{H}$ has locally analytic lifts 
and one can pull-back the pressure form  via $\Phi^\ms H$ to obtain an analytic pressure semi-norm
$\|\cdot\|_{\ms{H}}$ on $\TT\mathcal H_d(S)$.
(Pollicott and Sharp \cite{pollicott-sharp} previously proved that the entropy associated to $\ell_{\ms H}$ varies analytically over
$\mathcal H_d(S)$.)
However, this pressure form is degenerate in ways which are
completely analogous to the degeneracy of the pressure metric on quasifuchsian space discovered by Bridgeman \cite{bridgeman-wp}.

Consider the \emph{contragredient involution} $\sigma:\psln\to\psln$ given by $g\mapsto (g^{-1})^{\tp},$ where $\tp$ denotes the transpose operator 
associated to the standard inner product of $\Real^d.$
This involution induces an involution on the Hitchin component $\hat\sigma:\mathcal{H}_d(S)\to\mathcal{H}_d(S)$, where
$\hat\sigma(\rho)(\gamma)=\sigma(\rho(\gamma))$ for all $\gamma\in\grf$.
If $\eta\in\mathcal H_d(S)$ is a representation whose image lies in (a group conjugate to) ${\mathsf{Sp}}(2n,\Real)$ (if $d=2n$ )
or ${\mathsf{SO}}(n,n+1,\Real)$ (if $d=2n+1$), then $\hat\sigma(\eta)=\eta.$

Consider the tangent vectors in $\TT\mathcal H_d(S)$ which are reversed by $D\hat\sigma$, i.e. let
$$\pb=\{v\in \TT \mathcal H_d(S):D\hat\sigma (v)=-v\}.$$
The vectors in $\pb$ are degenerate for the pressure metric $\|\cdot\|_{\ms H}$.

\begin{lemma} If $v\in \pb$, then $\| v\|_{\ms H}=0.$
\end{lemma}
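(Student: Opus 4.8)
The plan is to show that any $v \in \pb$ is degenerate for the Hilbert pressure form by exploiting the symmetry of the Hilbert length function under the contragredient involution. The crucial observation is that the Hilbert length is preserved by $\hat\sigma$: since $\sigma(g) = (g^{-1})^{\tp}$ has the same eigenvalue moduli as $g^{-1}$, we have $\Lambda(\sigma(g)) = \Lambda(g^{-1})$ and $\Lambda(\sigma(g)^{-1}) = \Lambda(g)$, so that
\begin{equation}
\ell_{\ms H}(\hat\sigma(\rho)(\gamma)) = \log\Lambda(\hat\sigma(\rho)(\gamma)) + \log\Lambda(\hat\sigma(\rho)(\gamma^{-1})) = \ell_{\ms H}(\rho(\gamma))
\end{equation}
for all $\gamma\in\grf$ and all $\rho$. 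Thus $\ell_{\ms H}$, viewed as a function of $\rho$, is invariant under $\hat\sigma$, and hence so are the associated entropy $h(f^{\ms H}_\rho)$, the intersection numbers, and the whole thermodynamic construction built from $\ell_{\ms H}$.

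\textbf{Applying the degeneracy criterion.} First I would fix $v \in \pb$ based at $\rho$ and reduce to the criterion of Corollary \ref{degenerate vectors}, exactly as in the non-degeneracy proof (Proposition \ref{nondegenerate}). By Proposition \ref{pressure form is a Hessian} and the analogue of Lemma \ref{hitchin length equation} for the Hilbert reparametrization, I need to understand the derivative $D_\rho\big(h(f^{\ms H}_\cdot)\,\ell_{\ms H}(\cdot)(\gamma)\big)(v)$ for each closed orbit, i.e. each $\gamma\in\grf$. The point is that $\|v\|_{\ms H}=0$ precisely when this derivative vanishes for all $\gamma$. So the task is to show that the $\hat\sigma$-antisymmetry of $v$ forces these derivatives to vanish.

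\textbf{Exploiting antisymmetry.} Here is where $v\in\pb$ enters. Since $D\hat\sigma(v) = -v$, and since the relevant length-and-entropy functional $G_\gamma(\rho) := h(f^{\ms H}_\rho)\,\ell_{\ms H}(\rho(\gamma))$ is $\hat\sigma$-invariant (it depends only on Hilbert lengths, which are $\hat\sigma$-invariant by the identity above), the chain rule gives
\begin{equation}
D_\rho G_\gamma(v) = D_{\hat\sigma(\rho)}(G_\gamma\circ\hat\sigma)(v) = D_{\rho}G_\gamma\big(D\hat\sigma(v)\big) = D_\rho G_\gamma(-v) = -D_\rho G_\gamma(v),
\end{equation}
where the first equality uses $G_\gamma = G_\gamma\circ\hat\sigma$ and the fact that $\rho$ is a fixed point of $\hat\sigma$ (as $v\in\pb$ lives on the fixed locus, where the symplectic or orthogonal representations sit). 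Hence $D_\rho G_\gamma(v) = 0$ for every $\gamma$, which is exactly the condition in Corollary \ref{degenerate vectors} guaranteeing $\|v\|_{\ms H}=0$.

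\textbf{Main obstacle.} The delicate point I expect to require the most care is the claim that the base representation $\rho$ must itself be $\hat\sigma$-fixed in order to run the chain-rule argument above: the derivative computation only makes sense comparing $D_\rho$ with $D_{\hat\sigma(\rho)}$ when $\hat\sigma(\rho)=\rho$. One must verify that $\pb$ is only relevant (nonzero) at the fixed locus of $\hat\sigma$ — the symplectic and orthogonal sublocuses described before the lemma — and that the eigenvalue identity $\Lambda(\sigma(g))=\Lambda(g^{-1})$ passes correctly through the entropy normalization $h(f^{\ms H}_\rho)$, which also must be checked to be $\hat\sigma$-invariant. Granting these verifications, the antisymmetry of $v$ against the symmetry of $\ell_{\ms H}$ immediately collapses all the orbit-derivatives to zero, and the lemma follows.
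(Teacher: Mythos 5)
Your proof is correct and is essentially the paper's argument: both rest on the observation that $\ell_{\ms H}$ and the entropy are $\hat\sigma$-invariant while $v\in\pb$ is $\hat\sigma$-antisymmetric at a fixed point of $\hat\sigma$, so every derivative $D_\rho\big(h(f^{\ms H}_\cdot)\,\ell_{\ms H}(\cdot(\gamma))\big)(v)$ vanishes and Corollary \ref{degenerate vectors} applies. The only cosmetic difference is that the paper packages the antisymmetry into a path $\{\eta_t\}$ with $\hat\sigma(\eta_t)=\eta_{-t}$ and uses evenness in $t$, whereas you differentiate the invariance identity directly via the chain rule.
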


\begin{proof} 
Consider a path $\{\eta_t\}_{(-1,1)}\subset\mathcal H_d(S)$  so that $\hat\sigma(\eta_t)=\eta_{-t}$ for all \hbox{$t\in (-1,1)$}.
Then, $\ell_{\ms H}(\eta_t(\gamma))=\ell_{\ms H}(\eta_{-t}(\gamma))$ and $h(f_{\eta_t}^{\ms H})=h(f_{\eta_{-t}}^{\ms H})$ for all \hbox{$t\in (-1,1)$} and $\gamma\in\grf$.
Therefore,
$$\left.\frac{\partial}{\partial t}\right|_{t=0} h(f^\ms H_{\eta_t}) \ell_{f^\ms H_{\eta_t}}(\g)=0$$
for all $\g\in\grf$. Corollary \ref{degenerate vectors} then implies that $\|v\|_{\ms H}=0.$
\end{proof}

\noindent
{\bf Remark:} With a little more effort one may use the techniques of \cite{BCLS} to show that 
these are the only degenerate vectors for  $\|\cdot\|_{\ms H}$ and that $\|\cdot\|_{\ms H}$ induces a path
metric on the Hitchin component.

\section{Generalizations and consequences}
\label{general}

In \cite{BCLS} we work in the more general setting of Anosov representations of word hyperbolic
groups into semi-simple Lie groups. In this section, we will survey these more general results
and discuss some of the additional difficulties which occur. The bulk of the work in \cite{BCLS}
is done in the setting of projective Anosov representations into $\sln$. We note
that Hitchin representations are examples of projective Anosov representations as are 
Benoist representations, i.e. holonomy representations of closed strictly convex (real) projective
manifolds (see Guichard-Wienhard \cite[Prop. 6.1]{guichard-wienhard}).

\subsection{Projective Anosov representations} 
We first show that the pressure form gives an analytic Riemannian metric on the space of
(conjugacy classes of)  projective Anosov, generic, regular\footnote{A representation $\rho:\Gamma\to {\rm SL}_d(\mathbb R)$ is {\em regular} if it is a smooth point
of the algebraic variety ${\rm Hom}(\Gamma,{\rm SL}_d(\mathbb R))$.}, irreducible representations.
In order to define projective Anosov representations, we begin by recalling basic facts about
the geodesic flow of a word hyperbolic group.
 
Gromov \cite{gromov} first established that a word hyperbolic group $\Gamma$ has an associated
geodesic flow $U_\Gamma$. Roughly, one considers the obvious flow on the space of all
geodesics in the Cayley graph of $\Gamma$, collapses all geodesics joining two points
in the Gromov boundary to a single geodesic, and considers the quotient by the action of $\Gamma$.
We make use of the version due to Mineyev \cite{mineyev}
(see also Champetier \cite{champetier}).
Mineyev defines a proper cocompact action of $\Gamma$ on 
$\widetilde{U_\Gamma}=\partial_\infty\Gamma^{(2)}\times \mathbb R$
and a metric on $\widetilde{U_\Gamma}$, well-defined only up to H\"older equivalence,
so that $\Gamma$ acts by isometries, every orbit  of $\mathbb R$ is
quasi-isometrically embedded, and the $\mathbb R$-action is by Lipschitz homeomorphisms. Moreover,
the $\mathbb R$-action descends to a flow on $U_\Gamma=\widetilde{U_\Gamma}/\Gamma$.
In the case that $\Gamma$ is the fundamental group of a negatively curved manifold $M$, one
may take $U_\Gamma$ to be the geodesic flow on $\TT^1M$.

A representation $\rho:\Gamma\to \sln$ has {\em transverse projective limit maps} if there
exist continuous, $\rho$-equivariant limit maps
$$\xi_\rho:\partial_\infty\Gamma\to \mathbb P(\mathbb R^d)$$
and
$$\theta_\rho:\partial_\infty\Gamma\to Gr_{d-1}(\mathbb R^d)=\mathbb P((\mathbb R^d)^*)$$
so that if $x$ and $y$ are distinct points in $\partial_\infty\Gamma$, then
$$\xi_\rho(x)\oplus\theta_\rho(y)=\mathbb R^d.$$ 

A representation $\rho$ with transverse projective limit maps determines a splitting
$\Xi\oplus\Theta$ of the flat bundle $E_\rho$ over $U_\Gamma$. Concretely,  if $\tilde E_\rho$ is
the lifted bundle over $\widetilde{U_\Gamma}$, then the lift $\tilde \Xi$ of $\Xi$ has fiber
$\xi_\rho(x)$ and the  lift $\tilde \Theta$  of $\Theta$ has fiber $\theta_\rho(y)$ over the point $(x,y,t)$.
The geodesic flow on $U_\Gamma$ lifts to a flow on $\widetilde{U_\Gamma}$ which extends, trivially
in the bundle factor, to a flow on $\tilde E_\rho$ which descends to a flow on $E_\rho$.
One says that $\rho$ is {\em projective Anosov} if the resulting flow on the associated bundle
${\rm Hom}(\Theta,\Xi)=\Xi\otimes\Theta^*$ is contracting.

Projective Anosov representations are discrete, well-displacing, quasi-iso\-me\-tric embeddings with finite kernel such
that the image of each infinite order element is bi-proximal, i.e. its eigenvalues of maximal and minimal modulus have
multiplicity one (see 
Labourie \cite{labourie-anosov,labourie-energy} and Guichard-Wienhard \cite[Thm. 5.3,5.9]{guichard-wienhard}).
However, projective Anosov representations need not be irreducible and the images
of elements need not be diagonalizable over $\mathbb R$. 
On the other hand,
Guichard and Wienhard \cite[Prop. 4.10]{guichard-wienhard} showed that any 
irreducible representation with transverse projective limits maps is projective Anosov.

\subsection{Deformation spaces}
The space of all projective Anosov representations of a fixed word hyperbolic group $\Gamma$
into $\sln$ is an open subset of
${\rm Hom}(\Gamma,\sln)/\sln$ (see Labourie \cite[Prop. 2.1]{labourie-anosov} and Guichard-Wienhard \cite[Thm. 5.13]{guichard-wienhard}).
However, a projective Anosov representation need not be a smooth point of ${\rm Hom}(\Gamma,\sln)/\sln$
(see Johnson-Millson \cite{johnson-millson}). Moreover, the set of projective Anosov representations
need not be an entire component of ${\rm Hom}(\Gamma,\sln)/\sln.$

In order to have the structure of a real analytic manifold, we consider
the space $\widetilde{\mathcal{C}}(\Gamma,d)$  of regular, projective Anosov, irreducible representations 
$\rho:\Gamma\to \sln$
and let
$$\mathcal C(\Gamma,d)=\widetilde{\mathcal C}(\Gamma,d)/\sln.$$

If $\ms G$ is a reductive subgroup of $\sln$, we can restrict the whole discussion to
representations with image in $\ms G$, i.e. let $\widetilde{\mathcal{C}}(\Gamma,\ms G)$ 
be the space of regular,  projective Anosov, irreducible representations 
$\rho:\Gamma\to \ms G$ and let
$$\mathcal C(\Gamma,\ms G)=\widetilde{\mathcal C}(\Gamma,\ms G)/\ms G.$$

We will later want to restrict to the space
$\mathcal C_g(\Gamma,\ms G)$ of  $\ms G$-{\em generic} representation in $\mathcal C(\Gamma,\ms G)$, i.e.
representations such that the centralizer of some element in the image is a maximal torus in $\ms G$.
In particular, in the case that $\ms G=\sln$, a representation is $\ms G$-{\em generic} if
some element in the image is diagonalizable over $\mathbb C$
with distinct eigenvalues.
The resulting spaces are real analytic manifolds.

\begin{proposition}
\label{deformation spaces analytic}
{\rm (\cite[Prop. 7.1]{BCLS})}
If $\Gamma$ is a word hyperbolic group and $\ms G$ is a reductive subgroup of $\sln$,
then $\mathcal C(\Gamma,d)$, $\mathcal C(\Gamma,\ms G)$, $\mathcal C_g(\Gamma,\ms G)$
and $\mathcal C_g(\Gamma,d)=\mathcal C_g(\Gamma,\sln)$ are all real
analytic manifolds.
\end{proposition}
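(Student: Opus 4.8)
The plan is to show that each of the four spaces carries the structure of a real analytic manifold by exhibiting it as the quotient of a manifold by a free, proper, real analytic group action, and then invoking the standard quotient theorem for such actions. The key structural input is that we have already restricted to \emph{regular} representations, which are by definition smooth points of $\hom(\Gamma,\sln)$ (respectively $\hom(\Gamma,\ms G)$); so the preimages $\widetilde{\mathcal C}(\Gamma,d)$ and $\widetilde{\mathcal C}(\Gamma,\ms G)$ are open subsets of the smooth locus and hence are themselves real analytic manifolds. The whole argument then reduces to analyzing the action of $\sln$ (respectively $\ms G$) by conjugation on these manifolds and checking that the quotient is again a manifold.

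First I would treat $\mathcal C(\Gamma,d)=\widetilde{\mathcal C}(\Gamma,d)/\sln$. The action of $\sln$ by conjugation is real analytic, so it suffices to verify that the restricted action on $\widetilde{\mathcal C}(\Gamma,d)$ is free and proper. Freeness is where irreducibility enters: if $g\in\sln$ fixes an irreducible representation $\rho$, then $g$ centralizes $\rho(\Gamma)$, and since $\rho$ is irreducible Schur's Lemma forces $g$ to be a scalar, hence $\pm\mathrm{Id}$ in $\sln$; one then quotients by the (finite) center, or works in the appropriate projectivization, so that the effective action is free. For properness, I would use that projective Anosov representations are discrete, faithful, quasi-isometric embeddings with finite kernel (as recorded earlier in the excerpt via Labourie and Guichard-Wienhard), which prevents conjugating orbits from escaping to infinity while converging; more precisely, a sequence $g_n\rho g_n^{-1}\to\rho'$ with $\rho,\rho'\in\widetilde{\mathcal C}(\Gamma,d)$ forces the $g_n$ to stay in a compact set after passing to a subsequence, exactly because the images generate quasi-isometrically embedded discrete groups and irreducibility rigidifies the conjugating elements. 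Given freeness and properness of a real analytic action, the quotient manifold theorem yields that $\mathcal C(\Gamma,d)$ is a real analytic manifold.

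The case of $\ms G$ a reductive subgroup is entirely parallel, replacing $\sln$ by $\ms G$ throughout: the conjugation action of $\ms G$ on $\widetilde{\mathcal C}(\Gamma,\ms G)$ is real analytic, freeness again follows from irreducibility together with the fact that the centralizer in a reductive group of an irreducible subgroup is contained in the center of $\ms G$, and properness follows as before. For the generic loci $\mathcal C_g(\Gamma,\ms G)$ and $\mathcal C_g(\Gamma,d)$, I would observe that the genericity condition---that the centralizer of some element of the image is a maximal torus---is an \emph{open} condition on the representation, cut out by the non-vanishing of a discriminant-type analytic function (distinct eigenvalues of some $\rho(\gamma)$). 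Hence $\mathcal C_g(\Gamma,\ms G)$ is an open subset of $\mathcal C(\Gamma,\ms G)$, and an open subset of a real analytic manifold is a real analytic manifold.

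The main obstacle I expect is the verification of properness of the conjugation action, since freeness is an immediate consequence of Schur's Lemma once irreducibility is in hand and the manifold structure on the total spaces is automatic from regularity. Properness requires genuinely using the geometry of Anosov representations---one must rule out the degeneration of conjugating elements, and the natural tool is the quasi-isometric embedding property combined with a compactness argument on the limit maps $\xi_\rho$ and $\theta_\rho$, which vary continuously and transversally. I would reduce properness to the statement that the map $\rho\mapsto(\xi_\rho,\theta_\rho)$ is continuous and that transversality is a closed condition on a compact target, so that limits of Anosov representations along a conjugating sequence remain Anosov only if the conjugators converge; the bookkeeping here, rather than any single hard idea, is the real work, and I would cite \cite[Prop. 7.1]{BCLS} for the full details.
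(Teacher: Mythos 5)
The survey itself gives no argument for this proposition beyond the citation to \cite[Prop.~7.1]{BCLS}, so your proposal has to be measured against the proof there; your overall skeleton (regularity gives an analytic manifold structure on $\widetilde{\mathcal C}$, the conjugation action is free modulo the finite center and proper, quotient manifold theorem, genericity is an open condition) is the right one and is essentially what is done in \cite{BCLS}.

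There is, however, one genuine gap in the freeness step. Over $\mathbb R$, Schur's Lemma does \emph{not} say that the centralizer of an irreducible representation consists of scalars: it says the commutant is a finite-dimensional division algebra over $\mathbb R$, hence isomorphic to $\mathbb R$, $\mathbb C$ or $\mathbb H$. In the latter two cases the stabilizer of $\rho$ in $\sln$ is positive-dimensional (e.g.\ an irreducible rotation action on $\mathbb R^2$ has centralizer $\mathbb C^*$), and your quotient argument breaks down. What rescues the statement is the projective Anosov hypothesis: the image of any infinite-order element is proximal, so its top eigenspace is a real line; if the commutant contained a square root of $-\mathrm{Id}$, that eigenspace would be invariant under it and hence even-dimensional, a contradiction. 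So irreducible projective Anosov representations are automatically \emph{absolutely} irreducible, and only then does the centralizer reduce to the center. You need to say this explicitly; note that the paper itself is careful about the distinction elsewhere (``Since every Hitchin representation is absolutely irreducible, Schur's Lemma can be used\dots''). A second, milder point: you treat properness as the hard step and propose to derive it from the quasi-isometric embedding property and the limit maps. That route can be made to work, but it is not needed: irreducibility already implies the representation is semisimple, hence has closed $\sln$-orbit, and together with the finite stabilizer this places $\widetilde{\mathcal C}(\Gamma,d)$ inside the ``good'' locus of $\hom(\Gamma,\sln)$ on which the conjugation action of $\sln$ modulo its center is proper by standard results on representation varieties (Johnson--Millson, Lubotzky--Magid). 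The same applies verbatim to a closed reductive subgroup $\ms G$. With the absolute irreducibility point repaired and properness obtained this way, your argument for all four spaces, including the openness of the generic loci, goes through.
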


\subsection{The geodesic flow, entropy and intersection number}
One new difficulty which arises, is that it is not known in general whether or not the
geodesic flow of a word hyperbolic group is metric Anosov, i.e. a Smale flow in the sense
of Pollicott \cite{pollicott-smale}. Notice that our construction in Section \ref{geodesic flow}
immediately generalizes to give,  for any projective Anosov representation $\rho$, a geodesic flow $U_\rho$ 
which is H\"older orbit equivalent to $U_\Gamma$ and whose periods are exactly spectral radii of infinite
order elements of $\Gamma$. In general, we must further show \cite[Prop. 5.1]{BCLS}
that $U_\rho$ is a topologically transitive metric Anosov flow.

\begin{proposition}
\label{geodesic flow is Anosov}
{\rm (\cite[Prop. 4.1, 5.1]{BCLS})}
If $\rho:\Gamma\to\sln$ is projective Anosov, then there exists a
topologically transitive, metric Anosov flow $U_\rho$ which is  H\"older orbit equivalent
to $U_\Gamma$ such that the orbit associated to $\gamma\in\Gamma$ has period
$\Lambda(\rho(\gamma))$.
\end{proposition}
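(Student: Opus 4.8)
The plan is to treat the statement in two stages, mirroring its two source results \cite[Prop. 4.1, 5.1]{BCLS}: first construct the flow $U_\rho$ together with its H\"older orbit equivalence to $U_\Gamma$, and only then verify that $U_\rho$ is topologically transitive and metric Anosov. The first stage is a verbatim generalization of Proposition \ref{flow orbit equivalent}, while the second stage carries the genuinely new difficulty, since --- unlike in the surface case --- we may not assume that $U_\Gamma$ itself is metric Anosov, and this property is not transported by a mere H\"older orbit equivalence.

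For the construction, I would form the line bundle $\tilde F_\rho$ over $\bg^{(2)}$ whose fiber over $(x,y)$ consists of pairs $(\varphi,v)\in(\Real^d)^*\times\Real^d$ with $\ker\varphi=\theta_\rho(x)$, $v\in\xi_\rho(y)$ and $\varphi(v)=1$, taken modulo the identification $(\varphi,v)\sim(-\varphi,-v)$; transversality $\xi_\rho(y)\oplus\theta_\rho(x)=\Real^d$ guarantees each fiber is nonempty, and the flow $\tilde\phi^\rho_t(\varphi,v)=(e^{-t}\varphi,e^tv)$ together with the $\rho$-twisted $\Gamma$-action is defined exactly as before. To show the $\Gamma$-action is properly discontinuous and cocompact and to produce the orbit equivalence, I would invoke the defining contraction of the projective Anosov condition: the flow on $\operatorname{Hom}(\Theta,\Xi)=\Xi\otimes\Theta^*$ is contracting, so an averaging argument as in Lemma \ref{contracting metric} yields a H\"older metric on this bundle uniformly contracted by the flow. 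Normalizing the natural section of $\tilde F_\rho$ with respect to this metric gives a $\rho$-equivariant H\"older map $\tilde j:\widetilde{U_\Gamma}\to\tilde F_\rho$ of the form $(x,y,t)\mapsto(x,y,u(x,y,t))$; uniform contraction forces $\tilde j$ to be injective, and a properness argument (as in the proof of Proposition \ref{flow orbit equivalent}) makes it a homeomorphism, descending to the desired orbit equivalence $j:U_\Gamma\to U_\rho=\tilde F_\rho/\Gamma$. The period computation is identical to the one closing Proposition \ref{flow orbit equivalent}: over the fixed pair $(\gamma^+,\gamma^-)$ the generator $\gamma$ scales the top eigenline by $\Lambda(\rho(\gamma))$ and the annihilating functional by $\Lambda(\rho(\gamma))^{-1}$, so the closed orbit has period $\log\Lambda(\rho(\gamma))$.

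For the second stage, topological transitivity comes essentially for free: $U_\Gamma$ is topologically transitive (its closed orbits, indexed by the dense set of endpoint pairs $(\gamma^-,\gamma^+)\subset\bg^{(2)}$ of infinite-order elements, are dense), and a H\"older orbit equivalence is in particular a homeomorphism carrying orbits to orbits, hence preserves the existence of a dense orbit. The metric Anosov property, however, must be established directly on $U_\rho$ rather than transported across $j$. Here I would use the product structure $\bg^{(2)}\times\Real$ of $\widetilde{U_\Gamma}$, transported to $\tilde F_\rho$, to define candidate local stable and unstable sets --- the leaves on which the first, respectively second, boundary coordinate is held fixed --- and then verify Pollicott's axioms for a Smale flow \cite{pollicott-smale}. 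The local product structure, given by the bracket $[\,(x,y),(x',y')\,]=(x,y')$, is well defined and H\"older precisely because any two distinct boundary points yield a transverse, hence nonempty, fiber, and because $\xi_\rho$ and $\theta_\rho$ are H\"older. The required uniform exponential contraction along stable leaves, and expansion along unstable leaves, is a reinterpretation of the contraction of the flow on $\Xi\otimes\Theta^*$: the ``distance'' between two nearby orbits sharing a boundary coordinate is measured by a section of this bundle, whose contraction rate is the one supplied by the Anosov hypothesis.

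The main obstacle is this last stage: converting the bundle-level contraction guaranteed by projective Anosov-ness into honest uniform metric contraction and expansion estimates on $U_\rho$, while simultaneously checking that the bracket furnishes a genuine local product structure compatible with the flow, all with H\"older (rather than merely continuous) control. The subtlety that forces this direct argument --- and that distinguishes the general case from the surface case --- is that metric hyperbolicity is not a topological invariant and so cannot be pulled back along the H\"older orbit equivalence $j$. Once the Smale-flow axioms are verified, Pollicott's theory \cite{pollicott-smale} allows us to treat $U_\rho$ on the same footing as a genuine Anosov flow for all subsequent thermodynamic arguments.
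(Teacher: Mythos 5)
Your proposal is correct and follows essentially the same route as the paper: you generalize the line-bundle construction and contracting-metric orbit equivalence of Proposition \ref{flow orbit equivalent} verbatim to the transverse limit maps $(\xi_\rho,\theta_\rho)$, and then---since metric hyperbolicity is not transported by a H\"older orbit equivalence---you verify Pollicott's Smale-flow axioms directly on $U_\rho$ via the product structure of $\partial_\infty\Gamma^{(2)}\times\mathbb R$ and the contraction on $\Xi\otimes\Theta^*$, which is precisely the content of \cite[Prop.\ 4.1, 5.1]{BCLS} that the survey defers to. Your computed period $\log\Lambda(\rho(\gamma))$ is the intended reading of the statement.
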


Lemma \ref{time-conjugacy}  provides a H\"older function $f_\rho:\ms{U}_\Gamma\to(0,\infty)$, well-defined up to Liv\v sic cohomology, such that
$U_\rho$ is H\"older conjugate to the reparametrization of $U_\Gamma$ by $f_\rho$.
One may then use the Thermodynamic Formalism to define the
entropy of a projective Anosov representation and the intersection number and renormalized
intersection number of two projective Anosov representations. If $\rho$ is projective Anosov, we 
define
$$R_T(\rho)=\{[\gamma]\in [\pi_1(S)]\ |\ \log(\Lambda_\g(\rho))\le T\}$$
and
the {\em entropy} of $\rho$ is given by
$$h(\rho)=h(f_\rho)=\lim_{T\to\infty} \frac{\log \# R_T(\rho)}{T} .$$
The {\em intersection number} of  two projective Anosov representations $\rho$ and $\eta$ is given by
$$\II(\rho,\eta)=\II(f_\rho,f_\eta)=\lim_{T\to\infty}\ {1\over \# R_{T}(\rho)}\ 
\sum_{[\gamma]\in R_{\rho}(T)}{\log(\Lambda_{\g}(\eta))\over \log(\Lambda_{\g}(\rho))}$$
and their {\em renormalized intersection number} is
$$\JJ(\rho,\eta)={h(\eta)\over h(\rho)} \II(\rho,\eta).$$
One may use the technique of proof of Proposition \ref{Labourie maps analytic} to show that
all these quantities vary analytically.

\begin{theorem}
\label{analytic in projective case}
{\rm (\cite[Thm. 1.3]{BCLS})}
If $\Gamma$ is a word hyperbolic group and $\ms G$ is a reductive subgroup of $\psln$, then entropy varies analytically over 
$\mathcal C(\Gamma,\ms G)$ and intersection number and renormalized intersection
number vary analytically over $\mathcal C(\Gamma,\ms G)\times\mathcal C(\Gamma,\ms G)$.
\end{theorem}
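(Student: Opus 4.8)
The plan is to reduce the statement to Proposition~\ref{entropy analytic}, applied with the metric Anosov flow $U_\Gamma$ playing the role of the flow $\phi$. Since analyticity is a local matter, I would fix $\rho_0\in\mathcal C(\Gamma,\ms G)$ and work in a neighborhood $W$; by Proposition~\ref{deformation spaces analytic} we may treat $W$ as a neighborhood in a real analytic manifold, and after choosing a local analytic section we may regard $W$ as an analytic submanifold of $\hom(\Gamma,\sln)$. For each $\eta\in W$, Proposition~\ref{geodesic flow is Anosov} furnishes a topologically transitive metric Anosov geodesic flow $U_\eta$ that is H\"older orbit equivalent to $U_\Gamma$ and whose orbit associated to $\gamma$ has period $\log\Lambda_\gamma(\eta)$. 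The generalization of Lemma~\ref{time-conjugacy} to metric Anosov (Smale) flows then produces a positive H\"older reparametrization function $f_\eta:U_\Gamma\to(0,\infty)$, well-defined up to Liv\v sic cohomology, with $\int_{[\gamma]}f_\eta=\log\Lambda_\gamma(\eta)$ for every $\gamma\in\Gamma$.

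The heart of the argument, and the step I expect to be the main obstacle, is to arrange that the family $\{f_\eta\}_{\eta\in W}$ can be chosen to vary \emph{analytically}. For this I would run the Hirsch-Pugh-Shub scheme already used in Proposition~\ref{Labourie maps analytic}. The transversality of the projective limit maps $\xi_\eta$ and $\theta_\eta$ makes the lifted flow on the associated bundle contracting along the relevant sub-bundle, so the contraction mapping theorem produces a flow-invariant section whose fiber recovers the limit maps and which depends analytically on $\eta$; as in Proposition~\ref{Labourie maps analytic}, one must complexify by deforming into $\ms{SL}_d(\mathbb C)$ to upgrade the \emph{a priori} merely smooth dependence to analytic dependence. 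Consequently $\xi_\eta$ and $\theta_\eta$ vary analytically over $W$. Averaging a cross-ratio cocycle built from these limit maps over unit-length flow intervals and differentiating, exactly as in the proofs of Proposition~\ref{proposition:function} and Proposition~\ref{flow orbit equivalent}, then yields the explicit analytically-varying family $\{f_\eta\}_{\eta\in W}$ realizing the prescribed periods.

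The one structural point to keep in mind is that, for a general word hyperbolic group $\Gamma$, the base flow $U_\Gamma$ is not known to be a smooth Anosov flow but only a topologically transitive metric Anosov (Smale) flow. This is harmless: Pollicott's extension of the Thermodynamic Formalism to Smale flows \cite{pollicott-smale} applies verbatim, so Proposition~\ref{entropy analytic} holds with $U_\Gamma$ in place of $\phi$. Applying it to the analytic family $\{f_\eta\}_{\eta\in W}$ shows that $h(\eta)=h(f_\eta)$ varies analytically over $W$ and that $\II(\eta,\zeta)=\II(f_\eta,f_\zeta)$ and $\JJ(\eta,\zeta)=\JJ(f_\eta,f_\zeta)$ vary analytically over $W\times W$. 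Since entropy, intersection number, and renormalized intersection number depend only on the collection of periods $\{\log\Lambda_\gamma(\eta)\}$, they are independent of the chosen local section and patch to globally well-defined functions; as $\rho_0$ was arbitrary, this yields analyticity over all of $\mathcal C(\Gamma,\ms G)$ and over $\mathcal C(\Gamma,\ms G)\times\mathcal C(\Gamma,\ms G)$.
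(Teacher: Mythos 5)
Your overall route is the one the paper intends: the survey reduces Theorem \ref{analytic in projective case} to Proposition \ref{entropy analytic} applied to an analytically varying family of reparametrization functions, which is produced by running the Hirsch--Pugh--Shub scheme of Proposition \ref{Labourie maps analytic} on the limit maps (with a complexification into $\ms{SL}_d(\mathbb C)$ to upgrade smooth to analytic dependence) and then averaging. So the architecture of your argument matches the paper's one-line sketch and the underlying argument in \cite{BCLS}.

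There is, however, one point where your write-up asserts something the paper explicitly disclaims, and it affects the logic. You take $U_\Gamma$ itself as the base flow for the Thermodynamic Formalism and describe it as ``only a topologically transitive metric Anosov (Smale) flow.'' But the paper states that it is \emph{not known} in general whether the Gromov--Mineyev geodesic flow of a word hyperbolic group is metric Anosov; this is precisely the ``new difficulty'' that Proposition \ref{geodesic flow is Anosov} is designed to circumvent. If $U_\Gamma$ is not known to be a Smale flow, then Pollicott's extension of the formalism does not apply to it ``verbatim,'' and neither Liv\v sic's theorem (hence the well-definedness of $f_\eta$ up to cohomology) nor Proposition \ref{entropy analytic} is available with $U_\Gamma$ as the base. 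The repair is what \cite{BCLS} actually does: fix a basepoint $\rho_0\in\mathcal C(\Gamma,\ms G)$ and use $U_{\rho_0}$ as the base flow, since Proposition \ref{geodesic flow is Anosov} guarantees that $U_{\rho_0}$ is a topologically transitive metric Anosov flow. Composing the H\"older orbit equivalence from $U_{\rho_0}$ to $U_\Gamma$ with the one from $U_\Gamma$ to $U_\eta$ gives a H\"older orbit equivalence from $U_{\rho_0}$ to $U_\eta$, and the Smale-flow version of Lemma \ref{time-conjugacy} then produces $f_\eta$ as a positive H\"older function on $U_{\rho_0}$ with periods $\log\Lambda_\gamma(\eta)$. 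With that substitution the rest of your argument --- local analytic sections via Proposition \ref{deformation spaces analytic}, analytic variation of the limit maps, averaging, and independence of the choices because everything is determined by the conjugation-invariant period data --- goes through as you describe.
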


\subsection{The pressure metric for projective Anosov representation spaces}

If $\ms G$ is a reductive subgroup of $\psln$, we
define a thermodynamic mapping
$$\Phi:\mathcal C(\Gamma,\ms G)\to\mathcal H(\ms{U}_\Gamma)$$ 
by $\rho\mapsto [-h(f_\rho)f_\rho].$ We can again show that $\Phi$ has locally analytic lifts, so we can pull back the pressure norm on
$\mathcal P(U_\Gamma)$ to obtain a pressure semi-norm $\|\cdot\|_\PP$ on $\mathcal C(\Gamma,\ms G)$.
The resulting pressure semi-norm gives an analytic Riemannian metric on $\mathcal C_g(\Gamma,\ms G)$.

\begin{theorem}
{\rm (\cite[Thm. 1.4]{BCLS})}
\label{projective pressure metric} 
If $\Gamma$ is a word hyperbolic group and $\ms G$ is a reductive subgroup of $\sln$,
then the pressure form is an analytic
${\rm Out}(\Gamma)$-invariant Riemannian metric on $\mathcal C_g(\Gamma,\ms G)$.
In particular, the pressure form is an analytic
${\rm Out}(\Gamma)$-invariant Riemannian metric on $\mathcal C_g(\Gamma,d)$.
\end{theorem}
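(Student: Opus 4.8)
The plan is to run the four-step strategy of Section~\ref{general}, so that the theorem reduces to a non-degeneracy statement analogous to Proposition~\ref{nondegenerate}. Steps~1--3 are already available. Proposition~\ref{geodesic flow is Anosov} produces, for each $\rho\in\Cg$, a topologically transitive metric Anosov flow $U_\rho$ which is H\"older orbit equivalent to $U_\Gamma$ and whose periods are the logarithms of the spectral radii $\Lambda_\gamma(\rho)$; Lemma~\ref{time-conjugacy} then supplies a positive H\"older reparametrization function $f_\rho$, well-defined up to Liv\v sic cohomology, and hence a thermodynamic mapping $\Phi(\rho)=[-h(f_\rho)f_\rho]$. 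Using the technique of Proposition~\ref{Labourie maps analytic} one obtains local analytic lifts $\tilde\Phi$ of $\Phi$ into $\mathcal P(U_\Gamma)$, and pulling back the pressure form along such a lift defines the pressure semi-norm $\|\cdot\|_\PP$ on $\Cg$, which is a real analytic manifold by Proposition~\ref{deformation spaces analytic}.

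Analyticity and invariance I would dispatch immediately. By Proposition~\ref{pressure form is a Hessian}, the value of $\|\cdot\|_\PP^2$ at $\rho$ is the Hessian at $\rho$ of the function $\eta\mapsto\JJ(\rho,\eta)$; since Theorem~\ref{analytic in projective case} asserts that $\JJ$ varies analytically on $\Cg\times\Cg$, the pressure form is analytic. Because $\JJ(\rho,\rho)=1$ is the global minimum of $\JJ(\rho,\cdot)$ by Proposition~\ref{renormalized rigidity}, this Hessian is positive semi-definite, so $\|\cdot\|_\PP$ is genuinely a semi-norm and the theorem reduces to showing it is non-degenerate. For ${\rm Out}(\Gamma)$-invariance, observe that ${\rm Out}(\Gamma)$ acts by precomposition and so merely permutes the conjugacy classes $[\gamma]$, while $h$, $\II$ and hence $\JJ$ are built solely from the numbers $\{\log\Lambda_\gamma(\rho)\}$; thus $\JJ$ is invariant and so is its Hessian.

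The heart of the matter is non-degeneracy. Given $v\in\TT_\rho\Cg$ with $\|v\|_\PP=0$, pick an analytic path $\{\eta_t\}$ through $\rho$ in direction $v$ and apply Corollary~\ref{degenerate vectors} to the analytic family $\{f_{\eta_t}\}$; this yields $\left.\tfrac{\partial}{\partial t}\right|_{t=0}h(f_{\eta_t})\ell_{f_{\eta_t}}(a)=0$ for every closed orbit, which translates, exactly as in Lemma~\ref{hitchin length equation}, into
$$D_\rho\log\Lambda_\gamma(v)=K\log\Lambda_\gamma(\rho),\qquad K=-\frac{D_\rho h(v)}{h(\rho)},$$
valid for all $\gamma\in\Gamma$. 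I would then prove the exact analogue of Proposition~\ref{K=0 Hitchin}: this log-type condition forces $K=0$ and $D_\rho\tr_\gamma(v)=0$ for every $\gamma$. The mechanism is once more Lemma~\ref{technical}, driven by the projection expansion (\ref{expansion}) and the limit identities of Lemma~\ref{typk}, which apply to a coprime pair $\alpha,\beta$ because every infinite-order image is bi-proximal. Once all trace derivatives vanish, the analogue of Lemma~\ref{traces generate}---valid since a regular, irreducible representation is a smooth point of the character variety whose trace-function differentials span the cotangent space---forces $v=0$.

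The main obstacle is precisely this analogue of Proposition~\ref{K=0 Hitchin}. In the Hitchin case every $\rho(\gamma)$ is diagonalizable over $\Real$ with eigenvalues of distinct modulus, so one may write $\rho(\beta^n)=\sum_k\lambda_k^n\,\p_k$, differentiate each ratio $\lambda_k/\lambda_1$, and conclude $\dot t_k=0$. For a general projective Anosov $\rho$ only bi-proximality is guaranteed, so $\rho(\beta)$ need not be diagonalizable and the intermediate projections $\p_k$ need not be real. This is exactly where the genericity defining $\Cg$ is used: some image has centralizer a maximal torus, hence is diagonalizable over $\C$ with distinct eigenvalues. The plan is to complexify the computation---running the $F_n$-argument with the complex eigenvalues of such generic elements and invoking a complexified Lemma~\ref{technical}---to recover $\dot t_k=0$, then $K=0$, and finally the vanishing of every eigenvalue, hence every trace, derivative. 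Controlling the non-real spectrum and verifying that the trace differentials of the available generic elements span $\TT^*_\rho\Cg$ is the delicate point; the remainder is a faithful transcription of the Teichm\"uller and Hitchin arguments.
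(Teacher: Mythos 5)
Your proposal tracks the paper faithfully through Steps 1--3 (Proposition~\ref{geodesic flow is Anosov}, Lemma~\ref{time-conjugacy}, the local analytic lifts via the technique of Proposition~\ref{Labourie maps analytic}, analyticity from Theorem~\ref{analytic in projective case} and the Hessian interpretation, and $\mathrm{Out}(\Gamma)$-invariance), and also through the first half of the non-degeneracy argument: Corollary~\ref{degenerate vectors} giving the log-type identity of Lemma~\ref{entropy=K}, and the reduction to showing $K=0$ for $\ms G$-generic representations as in Proposition~\ref{degenerate implies K=0}.

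The gap is in your endgame. You propose to conclude that $D_\rho\tr_\gamma(v)=0$ and then invoke an analogue of Lemma~\ref{traces generate}. But the paper is explicit that in this generality ``we can only conclude that the derivative of spectral length, rather than trace, is trivial.'' The obstruction is exactly the one you half-identify and then try to complexify away: a general $\rho(\gamma)$ is only bi-proximal, and even for a $\ms G$-generic element the interior eigenvalues may occur in complex-conjugate pairs of \emph{equal} modulus, which destroys the strictly-decreasing-moduli hypothesis that Lemma~\ref{technical} needs to isolate each $\dot t_k$; so you cannot recover the derivatives of all eigenvalues, hence not the trace derivatives, of enough elements. Moreover your fallback --- that the trace differentials of the \emph{generic} elements alone span $\TT^*_\rho\Cg$ --- is an unproved extra claim that the paper never needs. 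The correct final step is different and simpler: once $K=0$, the log-type identity itself gives $D_\rho\log\Lambda_\gamma(v)=0$ for \emph{every} $\gamma\in\Gamma$ (no return to individual eigenvalues is required), and one then applies \cite[Prop.~10.3]{BCLS}, quoted in the paper immediately after Proposition~\ref{degenerate implies K=0}, which asserts that the differentials $\{D_\rho\Lambda_\gamma\}_{\gamma\in\Gamma}$ of the spectral-radius functions span $\TT_\rho^*\mathcal C_g(\Gamma,\ms G)$. Replacing the trace-spanning lemma by this spectral-radius-spanning lemma is the missing idea; without it your argument does not close.
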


It only remains to prove that the pressure semi-norm is non-degenerate. We follow the same outline as in the Hitchin setting,
but encounter significant new technical difficulties.
As before, we may use Corollary \ref{degenerate vectors} to obtain restrictions on the derivatives of spectral length of group elements.

\begin{lemma} {\rm (\cite[Lem. 9.3]{BCLS})}
\label{entropy=K} 
If $\ms G$ is a reductive subgroup of $\psln$, $v\in \TT_\rho\mathcal C(\Gamma,\ms G)$ and $\| v\|_\PP=0,$ then
$$D_{\rho}\log\Lambda_{\g} (v)= -\frac{D_\rho h(v)}{h(\rho)}\log\Lambda_{\g}(\rho)$$
for all $\gamma\in\Gamma$.
\end{lemma}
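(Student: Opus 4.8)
The plan is to deduce this statement from Corollary~\ref{degenerate vectors} in exactly the way that Lemma~\ref{hitchin length equation} was deduced in the Hitchin setting; the only genuinely new work lies in the analytic variation of the reparametrization functions, which was discussed above, while the identity itself is a formal consequence of the product rule once the periods are identified with logarithms of spectral radii.

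First I would realize the tangent vector $v$ as the velocity of an analytic path $\{\eta_t\}_{t\in(-1,1)}$ in $\mathcal C(\Gamma,\ms G)$ with $\eta_0=\rho$ and $\dot\eta_0=v$. Since the thermodynamic mapping $\Phi\colon\rho\mapsto[-h(f_\rho)f_\rho]$ admits local analytic lifts (the analogue of Proposition~\ref{Labourie maps analytic}, established by the same Hirsch--Pugh--Shub argument), I obtain an analytic family $\{f_{\eta_t}\}$ of positive H\"older functions on $U_\Gamma$ such that $\tilde\Phi(\eta_t)=-h(f_{\eta_t})f_{\eta_t}$ is an analytic lift of $\Phi$ near $\rho$. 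By Proposition~\ref{geodesic flow is Anosov} together with Lemma~\ref{time-conjugacy}, these may be chosen so that the closed orbit associated to $\gamma$ has period
$$\ell_{f_{\eta_t}}(\gamma)=\log\Lambda_\gamma(\eta_t).$$

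Because the pressure semi-norm on $\mathcal C(\Gamma,\ms G)$ is by definition the pullback of the pressure form under the lift $\tilde\Phi$, the hypothesis $\|v\|_\PP=0$ says precisely that $\|\dot\Phi_0\|_\PP=0$ for the family $\Phi(t)=-h(f_{\eta_t})f_{\eta_t}$. Corollary~\ref{degenerate vectors} then yields
$$\left.\frac{\partial}{\partial t}\right|_{t=0} h(f_{\eta_t})\,\ell_{f_{\eta_t}}(\gamma)=0$$
for every closed orbit of $U_\Gamma$, that is, for every conjugacy class of an infinite-order element $\gamma\in\Gamma$. Substituting $\ell_{f_{\eta_t}}(\gamma)=\log\Lambda_\gamma(\eta_t)$ and $h(f_{\eta_t})=h(\eta_t)$ and differentiating the product at $t=0$ gives
$$D_\rho h(v)\,\log\Lambda_\gamma(\rho)+h(\rho)\,D_\rho\log\Lambda_\gamma(v)=0,$$
so dividing by $h(\rho)\neq0$ produces the claimed formula. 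For the finitely many torsion classes the element $\rho(\gamma)$ stays of finite order along the deformation, so $\log\Lambda_\gamma\equiv0$ and both sides vanish; hence the identity in fact holds for all $\gamma\in\Gamma$.

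I expect the only real obstacle to be the existence of the analytically varying reparametrizations $\{f_{\eta_t}\}$, i.e.\ the local analytic lifts of $\Phi$ in the projective Anosov setting; this requires transporting the Hirsch--Pugh--Shub argument of Proposition~\ref{Labourie maps analytic} to the flat bundle over $U_\Gamma$ and complexifying to upgrade smooth dependence to analytic dependence. Granting that input, the lemma reduces to Corollary~\ref{degenerate vectors} together with a one-line application of the product rule.
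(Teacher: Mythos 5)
Your argument is correct and is essentially the paper's own proof: both realize $v$ as the velocity of an analytic path, invoke the local analytic lifts of the thermodynamic mapping to get an analytically varying family of reparametrization functions with $\ell_{f_{\eta_t}}(\g)=\log\Lambda_\g(\eta_t)$, apply Corollary~\ref{degenerate vectors} to the degenerate vector, and finish with the product rule and division by $h(\rho)$. The remark about torsion classes is a harmless addition; nothing further is needed.
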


We use this to establish the following analogue of Proposition \ref{K=0 Hitchin} from the Hitchin setting.
In order to do so, we must work in the setting of $\ms G$-generic representations and we can only conclude that the
derivative of spectral length, rather than trace, is trivial.

\begin{proposition}
\label{degenerate implies K=0}{\rm (\cite[Prop 9.1]{BCLS})} 
If $\ms G$ is a reductive subgroup of $\psln$, $v\in \TT_\rho\mathcal C_g(\Gamma,\ms G)$ and  there exists $K$ such that
$$D_{\rho}\log\Lambda_{\g} (v)=K\log\Lambda_{\g}(\rho)$$
for all $\gamma\in\Gamma$, then $K=0$. In particular,
$D_{\rho}\log \Lambda_\g (v)=0$
for all $\gamma\in\Gamma$.
\end{proposition}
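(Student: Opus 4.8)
The plan is to follow the strategy of the Hitchin case (Proposition~\ref{K=0 Hitchin}), reusing the log-type formalism and the technical Lemma~\ref{technical}, while supplying the two ingredients that were automatic there: a single group element with simple spectrum, and control of eigenvalues when they are complex. First I would observe that the hypothesis says exactly that each spectral-radius function $\Lambda_\gamma$ is of log-type $K$ at $v$. Because $\rho\in\mathcal C_g(\Gamma,\ms G)$ is $\ms G$-generic, I may choose $\beta\in\Gamma$ whose image $\rho(\beta)$ is diagonalizable over $\mathbb C$ with distinct eigenvalues $\lambda_1,\dots,\lambda_d$, so that the spectral expansion~(\ref{expansion}) is available; by bi-proximality $\lambda_1$ and $\lambda_d$ are real and simple, while the intermediate eigenvalues may occur in complex-conjugate pairs.

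Next I would fix $\alpha\in\Gamma$ coprime to $\beta$ and form the normalized traces
$$F_n(\rho)=\frac{\tr\big(\p_1(\rho(\alpha))\,\rho(\beta^n)\big)}{\lambda_1(\rho(\beta))^n\,\tr\big(\p_1(\rho(\alpha))\p_1(\rho(\beta))\big)}=1+\sum_{k=2}^d f_k\,t_k^n,\qquad t_k=\frac{\lambda_k}{\lambda_1},$$
which, by the projective-Anosov analogue of Lemma~\ref{typk} together with the hypothesis on $v$, are again of log-type $K$ at $v$. Differentiating and passing to even powers, as in the proof of Proposition~\ref{K=0 Hitchin}, rewrites the log-type identity in the shape $\sum_k n\,a_k u_k^n=\sum_s b_s v_s^n$ with absolutely convergent right-hand side, where now $t_k$, $u_k=t_k^2$ and the regrouped $v_s$ may be complex. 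The one place where this complexity intervenes is Lemma~\ref{technical}; but its proof only divides by $n\,u_1^n$ and uses that the moduli are weakly dominated, and the spare factor $1/n$ annihilates every bounded oscillation $e^{in\theta}$. I would therefore extend Lemma~\ref{technical} verbatim to complex sequences with $\{|u_p|\}$ and $\{|v_s|\}$ strictly decreasing, and apply it to obtain $a_k=0$, hence $\dot t_k=0$, for all $k$. In particular $D_\rho\log|t_k|(v)=0$, that is $D_\rho\log|\lambda_k(\beta)|(v)=K\log\Lambda_\beta(\rho)$ for every $k$, and in particular for $k=d$.

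Finally I would play $\beta$ against $\beta^{-1}$. Bi-proximality gives $|\lambda_d(\beta)|=\Lambda_{\beta^{-1}}(\rho)^{-1}$, so the case $k=d$ reads $-D_\rho\log\Lambda_{\beta^{-1}}(v)=K\log\Lambda_\beta(\rho)$, while the original hypothesis applied to $\beta^{-1}$ reads $D_\rho\log\Lambda_{\beta^{-1}}(v)=K\log\Lambda_{\beta^{-1}}(\rho)$. Adding the two yields
$$K\big(\log\Lambda_\beta(\rho)+\log\Lambda_{\beta^{-1}}(\rho)\big)=0,$$
and since $\beta$ has infinite order and $\rho$ is discrete and faithful both spectral radii exceed $1$, so the bracket is strictly positive and $K=0$; the hypothesis then reduces to $D_\rho\log\Lambda_\gamma(v)=0$ for all $\gamma$. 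I expect the main obstacle to be precisely the complex eigenvalues: one must check that $F_n$, assembled from the complex spectral projections, remains a positive real-analytic function of log-type $K$, and that after collapsing conjugate pairs the right-hand side still regroups as $\sum_s b_s v_s^n$ with \emph{strictly} decreasing moduli, so that the $1/n$-gap in the extended Lemma~\ref{technical} continues to isolate the leading term. Note also that, in contrast with the Hitchin case, the conclusion cannot be upgraded from eigenvalue moduli to traces, which is why only the vanishing of $D_\rho\log\Lambda_\gamma$ is asserted.
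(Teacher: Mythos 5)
Your strategy is the right one---it is essentially the argument of \cite[Section 9]{BCLS}, of which the survey's proof of Proposition \ref{K=0 Hitchin} is the simplified Hitchin-case version---and your endgame is correct: only $\dot t_d=0$ is needed, $f_d\neq 0$ does follow from transversality of the limit maps (since $\p_d(\rho(\beta))=\p_1(\rho(\beta^{-1}))$ and $\alpha,\beta^{-1}$ are coprime, so Lemma \ref{typk} applies), the intermediate $f_k$ may vanish but are then harmless, and the positivity of $\log\Lambda_\beta(\rho)$ and $\log\Lambda_{\beta^{-1}}(\rho)$ (which comes from proximality and the Anosov property, not from discreteness and faithfulness alone) forces $K=0$.

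The genuine gap is in your handling of Lemma \ref{technical}. You propose to extend it ``verbatim'' to complex sequences with $\{|u_p|\}$ and $\{|v_s|\}$ \emph{strictly} decreasing, and to ``check'' that after collapsing conjugate pairs the regrouped right-hand side still has strictly decreasing moduli. That check fails: for a general reductive $\ms G$, a $\ms G$-generic element is regular semisimple in $\ms G$ but its eigenvalues in $\sln$ routinely include distinct complex numbers of \emph{equal} modulus (already for a single conjugate pair $\lambda,\bar\lambda$, unavoidably when $\ms G$ is not split, and they may even be repeated when the restriction to a maximal torus of $\ms G$ has weight multiplicities, so one must group by eigenvalue rather than assume $d$ distinct ones). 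Consequently the $u_k=t_k^2$ and the regrouped $v_s$ can only be arranged with non-increasing moduli, and the proof of Lemma \ref{technical}---divide by the single leading term and let $n\to\infty$---breaks down exactly at a tie: the factor $1/n$ you invoke controls the right-hand side but does nothing to separate two left-hand terms of equal modulus. What is actually needed is the stronger statement that if the $u_p$ are \emph{distinct} nonzero complex numbers with non-increasing moduli then all $a_p=0$; its proof requires, on each circle $|z|=r$, the averaging observation that $\frac1N\sum_{n\leq N}(\bar z w)^n\to 0$ for distinct unimodular $\bar z w\neq 1$ and $\to 1$ otherwise, applied to isolate each coefficient among those sharing a modulus. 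This Ces\`aro/Wiener-type lemma is the real new content of \cite[Prop.\ 9.1]{BCLS} relative to the Hitchin case, and your proposal does not supply it.
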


One completes the proof by showing that the derivatives of the spectral radii functions generate the cotangent space.

\begin{proposition}{\rm (\cite[Prop. 10.3]{BCLS})}
If  $\ms G$ is a reductive subgroup of $\psln$ and $\rho\in\mathcal C_g(\Gamma,\ms G)$,
then the set $\{D_{\rho}\Lambda_\g\ \mid\ \g\in\Gamma\}$ spans $\TT_\rho^*\mathcal C_g(\Gamma,\ms G).$
\end{proposition}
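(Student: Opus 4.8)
The plan is to argue by contrapositive, in the same way that the spanning step (Lemma \ref{traces generate}) finished the Hitchin case, but with trace functions replaced by normalized matrix coefficients built from the top eigen-projection. So suppose $v\in\TT_\rho\mathcal C_g(\Gamma,\ms G)$ satisfies $D_\rho\Lambda_\g(v)=0$, equivalently $D_\rho\log\Lambda_\g(v)=0$, for every $\g\in\Gamma$; the goal is to show $v=0$, which is exactly the assertion that the differentials $\{D_\rho\Lambda_\g\}$ span $\TT^*_\rho\mathcal C_g(\Gamma,\ms G)$.

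First I would convert the vanishing of the spectral-radius derivatives into the vanishing of the derivatives of matrix-coefficient functions. Fix coprime $\alpha,\beta\in\Gamma$ and run the computation from the proof of Proposition \ref{K=0 Hitchin} with $K=0$: form $F_n(\rho)=\tr(\p_1(\rho(\alpha))\rho(\beta^n))/\big(\lambda_1(\rho(\beta^n))\tr(\p_1(\rho(\alpha))\p_1(\rho(\beta)))\big)$, expand $\rho(\beta^n)=\sum_k\lambda_k(\rho(\beta))^n\p_k(\rho(\beta))$, and apply Lemma \ref{technical}. The key point is that $\rho$ is only projective Anosov, not Borel Anosov, so I can only use that $\rho(\g)$ is \emph{proximal}: the top eigen-projection $\p_1(\rho(\g))$ is simple and varies analytically near $\rho$ (Lemma \ref{typk}), while the middle projections need not exist. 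Consequently the output is not the vanishing of all eigenvalue derivatives (as in the Hitchin case) but only the vanishing of $D_\rho\big[\tr(\p_1(\rho(\g))\rho(\delta))\big](v)$ and $D_\rho\big[\tr(\p_1(\rho(\g))\p_1(\rho(\delta)))\big](v)$ for all $\g,\delta\in\Gamma$. Passing the derivative through the $n\to\infty$ limits is justified by the analyticity quoted earlier (Proposition \ref{entropy analytic}).

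I would then finish using irreducibility and genericity. Writing $\p_1(\rho(\g))=|\xi_\rho(\g^+)\rangle\langle\theta_\rho(\g^-)|/\langle\theta_\rho(\g^-),\xi_\rho(\g^+)\rangle$, the functions $\rho\mapsto\tr(\p_1(\rho(\g))\rho(\delta))$ are the normalized matrix coefficients of $\rho$ read along the limit-set directions. As $\g$ ranges over $\Gamma$, the attracting lines $\xi_\rho(\g^+)$ and repelling hyperplanes $\theta_\rho(\g^-)$ are dense, and since $\rho$ is irreducible these rank-one operators span $\operatorname{End}(\Real^d)$; as $\delta$ ranges over $\Gamma$, the matrices $\rho(\delta)$ generate a Zariski-dense subgroup of $\ms G$. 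Hence these functions separate nearby conjugacy classes in $\mathcal C_g(\Gamma,\ms G)$, and by the standard argument behind Lemma \ref{traces generate} (Lubotzky--Magid \cite{lubotzky-magid}) at the smooth, generic point $\rho$ their differentials span $\TT^*_\rho\mathcal C_g(\Gamma,\ms G)$. The genericity hypothesis is exactly what keeps this analysis inside $\ms G$ rather than the ambient $\sln$. The vanishing from the previous step therefore forces $v=0$.

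The hard part will be this last step, converting ``all normalized top matrix-coefficient derivatives vanish'' into ``$v=0$''. Two features make it more delicate than the Hitchin case. First, only the top projection $\p_1$ is available, so one cannot reduce to trace functions and must instead exploit that $\p_1(\rho(\g))$ sweeps out a spanning family of rank-one operators as $\g$ varies. Second, $D_\rho\tr(\p_1(\rho(\g))\rho(\delta))$ carries an extra term coming from $D_\rho\p_1(\rho(\g))$, and it is precisely the power-series manipulation together with Lemma \ref{technical} in the second paragraph that isolates the correct combination, so that the limiting matrix coefficients, and not spurious projection-derivative terms, are what gets controlled. Ensuring that the whole argument respects the reductive structure of $\ms G$, via $\ms G$-genericity, is the final technical point.
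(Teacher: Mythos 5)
The survey states this proposition without proof --- it is quoted directly from \cite[Prop.\ 10.3]{BCLS} --- so there is no in-text argument to compare yours against; what follows measures your proposal against the Hitchin-case template (Proposition \ref{K=0 Hitchin} and Lemma \ref{traces generate}) that you are adapting. The framing is correct: the differentials $\{D_\rho\Lambda_\g\}$ span $\TT^*_\rho\mathcal C_g(\Gamma,\ms G)$ if and only if their common kernel is trivial, and your first reduction is sound. From $D_\rho\log\Lambda_\g(v)=0$ for all $\g$, the quotient rule gives vanishing derivatives of $\lambda_1(\rho(\alpha^n\beta))/\lambda_1(\rho(\alpha^n))$ for each $n$, and Lemma \ref{typk} together with locally uniform convergence of these analytic functions does yield $D_\rho\bigl[\tr\bigl(\p_1(\rho(\g))\rho(\delta)\bigr)\bigr](v)=0$ for all $\g,\delta$. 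Note this step needs only proximality, not Lemma \ref{technical}.

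The genuine gap is in your final step, and it is larger than you acknowledge. First, ``these functions separate nearby conjugacy classes, hence their differentials span'' is a non sequitur: point separation does not force differentials to span at a given point (consider $x\mapsto x^3$ on $\Real$), and the Lubotzky--Magid input behind Lemma \ref{traces generate} concerns the trace functions $\tr_\delta$ specifically; to invoke it you must first deduce $D_\rho\tr_\delta(v)=0$. Second, the mechanism you designate to do this --- ``the power-series manipulation together with Lemma \ref{technical} isolates the correct combination'' --- is exactly what is unavailable here: Lemma \ref{technical} is fed by the expansion $\rho(\beta^n)=\sum_k\lambda_k(\rho(\beta))^n\p_k(\rho(\beta))$, which requires $\rho(\beta)$ to be diagonalizable with real eigenvalues of distinct moduli, and a general projective Anosov image is only proximal; this is precisely why the proof of Proposition \ref{K=0 Hitchin} does not transfer. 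The naive alternative --- write $\operatorname{Id}=\sum_i c_i\,\p_1(\rho(\g_i))$ (the rank-one projections do span $\operatorname{End}(\Real^d)$, but this needs Benoist's density argument for the diagonal pairs $(\xi_\rho(\g^+),\theta_\rho(\g^-))$, not just the word ``irreducible'') and expand $\tr_\delta=\sum_i c_i\tr(\p_1(\rho(\g_i))\rho(\delta))$ --- fails because the $c_i$ and the projections depend on $\rho$, producing exactly the uncontrolled $D_\rho\p_1$ terms you flag. This is where $\ms G$-genericity must enter through an actual generic element $\g_0$: by hypothesis $\rho(\g_0)$ is diagonalizable over $\C$ with distinct eigenvalues and centralizer a maximal torus, so a full (complex) spectral expansion of $\rho(\g_0^n)$ is available and a Lemma \ref{technical}--type argument can be run against it, with the torus condition converting the resulting constraints into statements about $\TT_\rho\mathcal C_g(\Gamma,\ms G)$. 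Your sketch records genericity only as ``keeping the analysis inside $\ms G$'' and never produces or uses such an element, so as written the argument does not close.
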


\subsection{Anosov representations}

We now discuss the generalizations of our work to spaces of more general Anosov representations.
If $\ms G$ is any semisimple Lie group with finite center 
and $\ms P^\pm$ is a pair of  opposite parabolic subgroups, then
one may consider $(\ms G,\ms P^\pm)$-Anosov representations of a word
hyperbolic group $\Gamma$ into $\ms G$.
A $(\ms G,\ms P^\pm)$-Anosov representation $\rho:\Gamma\to\ms G$
has limit maps 
$$\xi_\rho^\pm:\partial_\infty\Gamma\to \ms G/\ms P^\pm$$
(which are transverse in an appropriate sense and give rise to associated flows with contracting/dilating properties).
In fact, Zariski dense representations with transverse limit maps are always $(\ms G,\ms P^\pm)$-Anosov
(\cite[Thm 4.11]{guichard-wienhard}). 

Projective Anosov representations are $(\ms G,\ms P^\pm)$-Anosov where $\ms G=\sln$, $\ms P^+$ is
the stabilizer of a line and $\ms P^-$ is the stabilizer of a complementary hyperplane
(\cite[Prop. 2.11]{BCLS}).
Hitchin  representations are $(\ms G,\ms P^\pm)$-Anosov where $\ms G=\sln$, $\ms P^+$ is the group
of upper triangular matrices (i.e. the stabilizer of the standard flag) and $\ms P^-$ is the group
of lower triangular matrices (Labourie \cite{labourie-anosov}).

We may think of Anosov representations as natural generalizations of Fuchsian 
representations, since
they are  discrete, faithful, quasi-isometric embeddings with finite kernel so that 
the image of every infinite order element is $\ms P^+$-proximal 
(\cite{labourie-anosov,labourie-energy} and \cite[Thm. 5.3,5.9]{guichard-wienhard}).
More generally, they may be thought of as generalizations of convex cocompact representations
into rank one Lie groups.
See Labourie \cite{labourie-anosov} and  Guichard-Wienhard \cite{guichard-wienhard} 
for definitions and more detailed discussions of Anosov representations. Gueritaud-Guichard-Kassel-Wienhard \cite{GGKW}
and Kapovich-Leeb-Porti \cite{KLP} have developed intriguing new viewpoints on Anosov representations
and their definition.

Guichard and Wienhard \cite[Prop. 4.2, Remark 4.12]{guichard-wienhard} (see also 
\cite[Thm 2.12]{BCLS}) showed that there exists
an irreducible representation $\sigma:\ms G\to \mathsf{SL}(V)$  (called the {\em Pl\"ucker representation})
such that  $\rho:\Gamma\to \ms G$ is
$(\ms G,\ms P^\pm)$-Anosov if and only if $\sigma\circ \rho$ is projective Anosov. Thus, one can
often reduce the study of  $(\ms G,\ms P^\pm)$-Anosov representations to the study of projective Anosov representations.

Let $\mathcal Z(\Gamma, \ms G, \ms P^\pm)$ be the space of (conjugacy classes of) regular, 
virtually Zariski dense $(\ms G,\ms P^\pm)$-Anosov representations. 
The space $\mathcal Z(\Gamma, \ms G, \ms P^\pm)$ is an analytic orbifold, which is a manifold if $\ms G$ is
connected (see \cite[Prop. 7.3]{BCLS}). The Pl\"ucker representation $\sigma:\ms G\to\sln$ allows one to view 
$\mathcal Z(\Gamma, \ms G, \ms P^\pm)$ as an analytically varying family of $\sigma(\ms G)$-generic
projective Anosov representations. One may pull back the pressure form and adapt the techiques from
the projective Anosov setting to prove:

\begin{theorem}
\label{general pressure metric}
{\rm (\cite[Cor. 1.9]{BCLS})}
If $\ms G$ is semi-simple Lie group with finite center and $\Gamma$ is word hyperbolic, then
the pressure form is an ${\rm Out}(\Gamma)$-invariant analytic Riemannian metric on 
$\mathcal Z(\Gamma, \ms G, \ms P^\pm)$.
\end{theorem}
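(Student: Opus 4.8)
The plan is to reduce the statement to the projective Anosov case of Theorem~\ref{projective pressure metric} by means of the Pl\"ucker representation $\sigma:\ms G\to\sln$. Recall that $\rho:\Gamma\to\ms G$ is $(\ms G,\ms P^\pm)$-Anosov precisely when $\sigma\circ\rho$ is projective Anosov, and that post-composition with $\sigma$ carries a regular, virtually Zariski dense representation in $\mathcal Z(\Gamma,\ms G,\ms P^\pm)$ to a $\sigma(\ms G)$-generic projective Anosov representation. Since $\sigma$ is irreducible, $\sigma(\ms G)$ is a reductive subgroup of $\sln$, so Theorem~\ref{projective pressure metric} applies to $\mathcal C_g(\Gamma,\sigma(\ms G))$. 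First I would record that post-composition with $\sigma$ defines an analytic map
$$\iota:\mathcal Z(\Gamma,\ms G,\ms P^\pm)\to\mathcal C_g(\Gamma,\sigma(\ms G)),$$
and that the two thermodynamic mappings match: the flow $U_{\sigma\circ\rho}$ has periods $\log\Lambda(\sigma(\rho(\g)))$, which are exactly the periods defining the flow used for $\mathcal Z(\Gamma,\ms G,\ms P^\pm)$. Hence $\Phi_{\mathcal Z}=\Phi_{\mathcal C_g}\circ\iota$, and the pressure form on $\mathcal Z(\Gamma,\ms G,\ms P^\pm)$ is the pullback $\iota^*$ of the pressure form on $\mathcal C_g(\Gamma,\sigma(\ms G))$.

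Granting this, analyticity and invariance are immediate. The pressure form on $\mathcal C_g(\Gamma,\sigma(\ms G))$ is analytic (Theorem~\ref{projective pressure metric}, see also Theorem~\ref{analytic in projective case}) and $\iota$ is analytic, so its pullback is analytic; likewise $\iota$ is equivariant for the ${\rm Out}(\Gamma)$-action by precomposition and the target form is ${\rm Out}(\Gamma)$-invariant, so $\iota^*$ of it is ${\rm Out}(\Gamma)$-invariant. It therefore only remains to prove that the pullback is positive definite, and since the target form is already a genuine Riemannian metric, this reduces to showing that $\iota$ is an immersion: if $\|v\|_\PP=0$ for $v\in\TT_\rho\mathcal Z(\Gamma,\ms G,\ms P^\pm)$, then $\|D_\rho\iota(v)\|_\PP=0$, whence $D_\rho\iota(v)=0$ by non-degeneracy on $\mathcal C_g(\Gamma,\sigma(\ms G))$, and we want to conclude $v=0$.

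The main obstacle is thus to verify that $D_\rho\iota$ is injective. At the level of deformation spaces, $D_\rho\iota$ is the map on first group cohomology
$$H^1(\Gamma,\mathfrak{g}_{{\rm Ad}\,\rho})\to H^1\big(\Gamma,(\mathfrak{sl}_d(\Real))_{{\rm Ad}\,\sigma\rho}\big)$$
induced by the differential $d\sigma:\mathfrak{g}\to\mathfrak{sl}_d(\Real)$, which is an injective $\Gamma$-equivariant map of coefficient modules. The key point is that, because $\ms G$ is semisimple, complete reducibility provides an ${\rm Ad}(\ms G)$-invariant (hence $\Gamma$-invariant) splitting $\mathfrak{sl}_d(\Real)=d\sigma(\mathfrak{g})\oplus\mathfrak{m}$; this induces a direct-sum decomposition of $H^1$, so the map above is split injective and $\iota$ is an immersion. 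Combined with the previous paragraph, this shows $\|\cdot\|_\PP$ is positive definite on $\mathcal Z(\Gamma,\ms G,\ms P^\pm)$, completing the proof. I expect this cohomological splitting to be the delicate point, as it is where semisimplicity of $\ms G$ is genuinely used. (Alternatively, one can bypass the immersion argument and adapt the non-degeneracy machinery of the projective case directly, running Lemma~\ref{entropy=K} and Proposition~\ref{degenerate implies K=0} for $\sigma\circ\rho$ and then invoking that the differentials of the Pl\"ucker spectral-radius functions span $\TT_\rho^*\mathcal Z(\Gamma,\ms G,\ms P^\pm)$; there the delicate step is instead the spanning statement, which is where virtual Zariski density enters.)
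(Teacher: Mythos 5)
Your proposal is correct and follows essentially the same route as the paper (and as \cite[Cor. 1.9]{BCLS}): use the Pl\"ucker representation $\sigma$ to realize $\mathcal Z(\Gamma,\ms G,\ms P^\pm)$ as an analytically varying family of $\sigma(\ms G)$-generic projective Anosov representations, identify the two thermodynamic mappings, and pull back the Riemannian pressure form of Theorem \ref{projective pressure metric} along the resulting immersion. One small remark: since your target is $\mathcal C_g(\Gamma,\sigma(\ms G))$ rather than $\mathcal C_g(\Gamma,d)$, the differential of $\iota$ is induced by the isomorphism $d\sigma:\mathfrak g\to d\sigma(\mathfrak g)$ of coefficient modules, so injectivity is immediate and the complete-reducibility splitting of $\mathfrak{sl}_d(\Real)$ is not needed (and mapping into $\mathcal C_g(\Gamma,d)$ itself would be problematic, as $\sigma\circ\rho$ need not be $\sln$-generic).
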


\subsection{Examples}

There are two other important classes of higher Teichm\"uller spaces which are (quotients of)
entire components of representation varieties.

Burger, Iozzi and Wienhard \cite{BIW}  have studied representations of $\pi_1(S)$ into
a Hermitian  Lie group  $\ms G$ of tube type with maximal Toledo invariant, i.e.  maximal 
representations. Each maximal representation is Anosov, with respect to stabilizers of  points
in the Shilov boundary of the associated symmetric space (\cite{BILW,BIW2}), and the space of all maximal representations
is a collection of components of $\ms{Hom}(\pi_1(S),\ms G)$ (\cite{BIW}). One particularly nice case
arises when $\ms G=\ms{Sp}(4,\mathbb R)$, in which case there are $2g-3$ components which
are non-simply connected manifolds consisting entirely of Zariski dense representations
(see Bradlow-Garcia-Prada-Gothen \cite{BGG}). Hence,  the quotients by $\ms G$ of 
all such components admit pressure metrics.

Benoist \cite{benoist-divisible1,benoist-divisible3} studied holonomies of strictly convex projective 
structures  on a closed manifold $M$ and showed that these consist of  entire components of 
$\ms{Hom}(\pi_1(M),\psln)$. One may use his work to show that these representations, which we
call Benoist representations, are projective Anosov
(see Guichard-Wienhard \cite[Prop. 6.1]{guichard-wienhard}). Johnson-Millson \cite{johnson-millson} gave examples
of holonomy maps $\rho:\pi_1(M)\to \ms{SO}(d-1,1)$ of closed hyperbolic $d-1$-manifold, where $d\ge 5$,
such that $\rho$ is a singular point of $\ms{Hom}(\pi_1(M),\psln)$.

\subsection{Rank one Lie groups}
\label{rank one}
Let  $\Gamma$ be a word hyperbolic group and let $\mathsf G$ be a rank 1 semi-simple Lie group, 
e.g. ${\rm PSL}_2(\C)$. A representation
$\rho:\Gamma \to \mathsf G$ is {\em convex cocompact} if and only if whenever one chooses a basepoint
$x_0$ for the symmetric space $X=\ms K\backslash\ms G$ then the orbit map $\tau:\Gamma\to X$ given
by $\gamma\to \gamma(x_0)$ is a quasi-isometric embedding.
The limit set of $\rho(\Gamma)$ is
then the set of accumulation points in $\partial_\infty X$ of the image of the orbit map and one can
define the Hausdorff dimension of this set.
Patterson \cite{patterson}, Sullivan \cite{sullivan}, Corlette-Iozzi \cite{corlette-iozzi},
and Yue \cite{yue} showed that
the topological entropy of a convex cocompact representation agrees with the Hausdorff
dimension of the limit set of its image.

A representation $\rho:\Gamma\to \ms G$ is convex cocompact if and only if it is Anosov
(see Guichard-Wienhard \cite[Thm. 5.15]{guichard-wienhard}).
Since the Pl\"ucker embedding multiplies entropy by a constant 
depending only on $\ms G$ (see \cite[Cor. 2.14]{BCLS}),
the analyticity of the Hausdorff dimension of the limit set follows from the analyticity of entropy
for projective Anosov representations.

\begin{theorem}
{\rm (\cite[Cor. 1.8]{BCLS})}
If  $\Gamma$ is a word hyperbolic group and $\ms G$ is a rank 1 semi-simple Lie group,
then the Hausdorff dimension of the limit set  varies analytically over analytic families of 
convex cocompact representations of $\Gamma$ into $\mathsf G$.
\end{theorem}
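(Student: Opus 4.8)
The plan is to reduce the statement to the already-established analyticity of entropy for projective Anosov representations (Theorem \ref{analytic in projective case}) by chaining together the three facts recalled in the surrounding discussion. Let $\{\rho_u\}_{u\in M}$ be an analytic family of convex cocompact representations of $\Gamma$ into the rank one group $\ms G$, parametrized by an analytic manifold $M$. First I would invoke the equivalence, due to Guichard--Wienhard, that a representation into a rank one semi-simple Lie group is convex cocompact if and only if it is Anosov with respect to a suitable pair of opposite parabolics $\ms P^\pm$. Thus $\{\rho_u\}_{u\in M}$ is in particular an analytic family of $(\ms G,\ms P^\pm)$-Anosov representations.

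Next I would pass to the projective Anosov setting via the Pl\"ucker representation $\sigma:\ms G\to\sln$. Since $\sigma$ is a fixed algebraic, hence analytic, homomorphism, post-composition sends $\{\rho_u\}_{u\in M}$ to the analytic family $\{\sigma\circ\rho_u\}_{u\in M}$ of projective Anosov representations. This places us exactly in the framework where Theorem \ref{analytic in projective case} applies, so that the entropy $h(\sigma\circ\rho_u)$ varies analytically over $M$.

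It then remains to transport this analyticity back to the Hausdorff dimension of the limit set of $\rho_u(\Gamma)$, and for this I would use two identifications. On one hand, the cited fact (\cite[Cor. 2.14]{BCLS}) that the Pl\"ucker embedding multiplies entropy by a constant $c=c(\ms G)$ depending only on $\ms G$ yields $h(\sigma\circ\rho_u)=c\,h(\rho_u)$, so $h(\rho_u)=\tfrac{1}{c}\,h(\sigma\circ\rho_u)$ varies analytically in $u$. On the other hand, the theorem of Patterson, Sullivan, Corlette--Iozzi and Yue identifies $h(\rho_u)$ with the Hausdorff dimension of the limit set of $\rho_u(\Gamma)$ in $\partial_\infty X$, where $X=\ms K\backslash\ms G$. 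Combining these two identities shows that the Hausdorff dimension varies analytically over $M$, as desired.

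The only genuinely delicate input---and the one where all the real work has already been carried out---is the analyticity of entropy for projective Anosov representations. That analyticity is far from elementary: it rests on the analytic dependence of the reparametrization functions $f_{\sigma\circ\rho_u}$, obtained as in Proposition \ref{Labourie maps analytic} via the Hirsch--Pugh--Shub machinery together with a complexification argument, combined with Ruelle's real-analyticity of the pressure function in the Thermodynamic Formalism. Given Theorem \ref{analytic in projective case}, however, the present corollary follows formally, and the main obstacle one must check is simply that the ambient analytic structures line up---that post-composition with $\sigma$ maps analytic families of Anosov representations to analytic families of projective Anosov representations and that the entropy-scaling constant $c(\ms G)$ interacts correctly with the analytic manifold structure on the relevant deformation space.
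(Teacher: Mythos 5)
Your proposal is correct and follows essentially the same route as the paper: the text surrounding this theorem reduces it to the analyticity of entropy for projective Anosov representations (Theorem \ref{analytic in projective case}) via exactly the three inputs you cite --- the Guichard--Wienhard equivalence between convex cocompactness and the Anosov property in rank one, the Pl\"ucker embedding's multiplication of entropy by a constant depending only on $\ms G$, and the Patterson--Sullivan--Corlette--Iozzi--Yue identification of entropy with the Hausdorff dimension of the limit set. Nothing further is needed.
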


\medskip\noindent
{\bf Remark:} When $\ms G=\ms{PSL}_2(\C)$, 
Ruelle \cite{ruelle-hd} proved this for surface groups and Anderson-Rocha 
\cite{anderson-rocha} proved it for  free products of surface groups and free groups.
Tapie \cite{tapie} used work of Katok-Knieper-Pollicott-Weiss \cite{KKPW} to show that the 
Hausdorff dimension is $C^1$ on smooth families of convex cocompact representations.

\medskip

Let $CC(\Gamma,\mathsf{PSL}_2(\C))$ be the space of (conjugacy classes of ) convex cocompact
representation of $\Gamma$ into $\ms{PSL}_2(\C)$. Bers \cite{bers} showed that
$CC(\Gamma,\mathsf{PSL}_2(\C))$ is an analytic manifold. Recall that 
a convex cocompact representation is not Zariski dense if and only
if it is virtually Fuchsian, i.e. contain a finite index subgroup conjugate into $\ms{PSL}_2(\mathbb R)$.
We may again use the Pl\"ucker representation to prove:

\begin{theorem} 
{\rm (\cite[Cor. 1.7]{BCLS})} If $\Gamma$ is word hyperbolic, then the pressure form
is ${\rm Out}(\Gamma)$-invariant  and analytic on
$CC(\Gamma,\mathsf{PSL}_2(\C))$ and is non-degenerate at any representation
which is not virtually Fuchsian. In particular, if 
$\Gamma$ is not either virtually free or virtually a surface group, then
the pressure form is an analytic Riemannian metric on \hbox{$CC(\Gamma,\mathsf{PSL}_2(\C))$}.
Moreover, the pressure form always induces a path metric on \hbox{$CC(\Gamma,\mathsf{PSL}_2(\C))$}.
\end{theorem}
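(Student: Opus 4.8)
The plan is to realize $CC(\Gamma,\ms{PSL}_2(\C))$ inside the projective Anosov framework already developed and then transport the general machinery. First I would record the structural setup: by Guichard--Wienhard \cite{guichard-wienhard} a representation $\rho\colon\Gamma\to\ms{PSL}_2(\C)$ is convex cocompact if and only if it is Anosov with respect to a minimal parabolic, and Bers \cite{bers} makes $CC(\Gamma,\ms{PSL}_2(\C))$ an analytic manifold. Composing with the standard embedding $\ms{PSL}_2(\C)\cong\ms{SO}^+(3,1)\hookrightarrow\ms{SL}_4(\Real)$ (a Pl\"ucker representation $\sigma$) turns each $\rho$ into a projective Anosov representation $\sigma\circ\rho$; since $\sigma$ multiplies every period $\log\Lambda_\g$, and hence entropy and all intersection numbers, by a fixed constant (\cite[Cor. 2.14]{BCLS}), the pressure form on $CC(\Gamma,\ms{PSL}_2(\C))$ is, up to a positive scalar, the pullback of the projective Anosov pressure form. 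Proposition \ref{geodesic flow is Anosov} then supplies the geodesic flow $U_\rho$ and a reparametrization function $f_\rho$ whose periods are the spectral radii $\Lambda_\g(\rho)$, so the thermodynamic mapping $\rho\mapsto[-h(f_\rho)f_\rho]$ and the pressure form are defined on all of $CC(\Gamma,\ms{PSL}_2(\C))$, independently of irreducibility.

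Analyticity and $\mathrm{Out}(\Gamma)$-invariance come essentially for free. Analyticity of entropy and of the (renormalized) intersection number over the whole of $CC(\Gamma,\ms{PSL}_2(\C))$ follows by the methods of Theorem \ref{analytic in projective case} and the local-lift construction, which apply to analytically varying projective Anosov families whether or not they are irreducible; Proposition \ref{pressure form is a Hessian} then identifies the pressure form with the Hessian of $\JJ_\rho$, which is analytic. Invariance under $\mathrm{Out}(\Gamma)$ holds because $\JJ(\rho,\eta)$ depends only on the marked spectral data $\{\log\Lambda_\g\}_{\g\in\Gamma}$, which an automorphism of $\Gamma$ merely permutes, so the Hessian at $\rho$ is carried to the Hessian at the precomposed representation.

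The heart of the argument is non-degeneracy, where the dichotomy Zariski dense versus virtually Fuchsian enters. If $\rho$ is \emph{not} virtually Fuchsian it is Zariski dense in $\ms{PSL}_2(\C)$, so $\sigma\circ\rho$ acts irreducibly on $\Real^4$ and contains an $\ms{SO}^+(3,1)$-generic element; hence $\sigma\circ\rho\in\mathcal C_g(\Gamma,\ms{SO}^+(3,1))$. I would then run the Hitchin-style scheme: if $\|v\|_\PP=0$, Lemma \ref{entropy=K} gives $D_\rho\log\Lambda_\g(v)=K\log\Lambda_\g(\rho)$ for a constant $K$; Proposition \ref{degenerate implies K=0} forces $K=0$, so $D_\rho\log\Lambda_\g(v)=0$ for all $\g$; and the spanning result \cite[Prop. 10.3]{BCLS} then yields $v=0$ (equivalently, one may invoke Theorem \ref{general pressure metric} for $\mathcal Z(\Gamma,\ms{PSL}_2(\C),\ms P^\pm)$ directly). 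By contrast, a non-Zariski-dense $\rho$ has image preserving a splitting $\Real^4=\Real^3\oplus\Real$ (the spacelike line fixed by a Fuchsian image), so $\sigma\circ\rho$ is reducible, lies outside $\mathcal C_g$, and the spanning step fails: the degenerate directions are exactly the \emph{bending} vectors, which alter rotation parts but not real translation lengths to first order and so lie in the kernel of every $D_\rho\log\Lambda_\g$, precisely as in Bridgeman's quasifuchsian picture \cite{bridgeman-wp}.

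The two remaining assertions then follow quickly. If $\Gamma$ is neither virtually free nor virtually a surface group, it admits no virtually Fuchsian convex cocompact representation, since a finite-index subgroup mapping into $\ms{PSL}_2(\Real)$ would be convex cocompact there and hence free or a surface group; therefore every $\rho\in CC(\Gamma,\ms{PSL}_2(\C))$ is Zariski dense and the pressure form is non-degenerate throughout, giving an analytic Riemannian metric. Finally, the pressure form is a nonnegative analytic symmetric two-tensor that is positive definite off the lower-dimensional virtually Fuchsian locus, so integrating its square root along paths and taking the infimum over paths produces a genuine path metric, exactly as Bridgeman does for the degenerate quasifuchsian metric \cite{bridgeman-wp}. \textbf{The main obstacle} is the non-degeneracy step: concretely, showing that Zariski density yields an \emph{irreducible}, and hence $\mathcal C_g$, realization so that the spanning result applies, and dually confirming that the only degenerations are the bending directions along the virtually Fuchsian locus; verifying that the resulting semi-norm still integrates to a bona fide path metric is the second, more technical, point.
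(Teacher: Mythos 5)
Your proposal follows exactly the route the paper indicates for this result: compose with a Pl\"ucker representation to land in the projective Anosov framework of Proposition \ref{geodesic flow is Anosov}, use that a convex cocompact representation into $\ms{PSL}_2(\C)$ is Zariski dense if and only if it is not virtually Fuchsian, and then run Lemma \ref{entropy=K}, Proposition \ref{degenerate implies K=0} and the spanning of the cotangent space by derivatives of spectral radii to get non-degeneracy at Zariski dense points; the analyticity and ${\rm Out}(\Gamma)$-invariance statements are obtained as you describe, and your reduction of the ``in particular'' clause to the classification of convex cocompact subgroups of $\ms{PSL}_2(\Real)$ is fine.

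The one place where your written justification does not actually suffice is the final path-metric assertion. A nonnegative analytic form that is positive definite off a lower-dimensional locus need \emph{not} induce a path metric: for instance $x^2(dx^2+dy^2)$ on $\Real^2$ is positive definite off the $y$-axis yet gives distance zero between distinct points of that axis. So the ``so'' in your last paragraph is not a valid implication on its own; one genuinely needs the Bridgeman-style control of the degenerate locus that you only gesture at, namely that the restriction of the pressure form to the virtually Fuchsian locus is itself non-degenerate (there it is a Weil--Petersson-type metric, non-degenerate by the same length-function argument as in the Teichm\"uller case) and that the degenerate vectors at virtually Fuchsian points are bending directions transverse to that locus, so that a nonconstant path of zero length cannot exist. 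You correctly flag this as the technical point to be verified, but as stated the step is a gap rather than a proof.
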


Bridgeman \cite{bridgeman-wp} had previously defined and studied the pressure metric on
quasifuchsian space $QF(S)=CC(\pi_1(S),\ms{PSL}_2(\C))$. He showed that the degenerate vectors
in this case correspond exactly to pure bending vectors on the Fuchsian locus.

\subsection{Margulis space times} 

A {\em Margulis space time} is a quotient of $\mathbb R^3$ by a free, non-abelian group of affine
transformations which acts properly discontinuously on $\mathbb R^3$. They were originally discovered
by Margulis \cite{margulis-spacetime} as counterexamples to a question of Milnor. Ghosh \cite{ghosh-anosov}
used work of Goldman, Labourie and Margulis \cite{goldman-labourie,goldman-labourie-margulis} to interpret holonomy maps of Margulis
space times (without cusps) as ``Anosov representations'' into the (non-semi-simple) Lie group $\ms{Aff}(\mathbb R^3)$ of affine automorphisms
of $\mathbb R^3$.
Ghosh \cite{ghosh-pressure} was then able to adapt the techniques of \cite{BCLS} to produce a pressure form
on the analytic manifold $\mathcal M$ of (conjugacy classes of) holonomy maps of Margulis space times of fixed rank (with no cusps). This pressure form is
an analytic Riemannian metric on the slice $\mathcal M_k$ of $\mathcal M$ consisting of holonomy maps
with entropy $k$ (see Ghosh \cite[Thm. 1.0.1]{ghosh-pressure}), but has a degenerate direction on
$\mathcal M$, so  the pressure form has signature \hbox{$(\dim\mathcal M-1, 0)$} on $\mathcal M$.

\section{Open problems}

The geometry of the pressure metric is still rather mysterious and much remains to be explored.
The hope is that the geometry of the pressure metric will yield insights into the nature of the Hitchin
component and other higher Teichm\"uller spaces, in much the way that the study of the
Teichm\"uller and Weil-Petersson metrics  have been an important tool in our understanding of Teichm\"uller space 
and the mapping class group. It is natural
to begin by exploring analogies with the Weil-Petersson metric on Teichm\"uller space. 
We begin the discussion by recalling some basic properties of the Weil-Petersson metric.

\medskip\noindent
{\bf Properties of the Weil-Petersson metric:}
\begin{enumerate}
\item
The extended mapping class group 
is the  isometry group of $\mathcal T(S)$ in the Weil-Petersson metric (Masur-Wolf \cite{masur-wolf}).
\item
The Weil-Petersson metric is  negatively curved, but the sectional curvature is not bounded away from either 0 or $-\infty$ 
(Wolpert \cite{wolpert-neg}, Tromba \cite{tromba-neg}, Huang \cite{huang}).
\item
If $\phi$ is a pseudo-Anosov mapping class, then there is a lower bound for
its translation distance on Teichm\"uller space and there is a unique invariant geodesic axis for $\phi$
(Daskalopoulos-Wentworth \cite{DW-classification}).
\item
The Weil-Petersson metric is incomplete (Wolpert \cite{wolpert-incomplete}, Chu \cite{chu}).
However, it admits a metric
completion which is $CAT(0)$ and homeomorphic to the augmented Teichm\"uller space
(see Masur \cite{masur-wp} and Wolpert \cite{wolpert-wpcompletion}).
\end{enumerate}

Masur and Wolf's result \cite{masur-wolf} on the isometry group of $\mathcal T(S)$ suggests the following problem.

\medskip\noindent
{\bf Problem 1:} {\em Is the isometry group of a Hitchin component generated by
the (extended) mapping class group and the contragredient involution? More generally,
explore whether the relevant outer automorphism group is a finite index subgroup of
the isometry group of a higher Teichm\"uller space with the pressure metric.}

\medskip

Bridgeman, Canary, and Labourie  \cite{BCL} have shown that any diffeomorphism of $\mathcal H_3(S)$
which preserves the intersection number is an element of the extended mapping class group or the composition
of an element in the extended mapping class group with the contragredient involution. Along the way,
they show that any diffeomorphism which preserves the intersection number
also preserves the entropy and hence preserves  the renormalized intersection number, 
the pressure metric, and, by work of 
Potrie and Sambarino \cite{potrie-sambarino}, the Fuchsian locus. This suggests the following problem:

\medskip\noindent
{\bf Problem 2:} {\em Prove that if $g:\mathcal H_d(S)\to\mathcal H_d(S)$ is an isometry with respect to
the pressure metric then $\II(g(\rho),g(\sigma))=\II(\rho,\sigma)$
for all $\rho,\sigma\in\mathcal H_d(S)$. It would follow that the isometry of the group of the Hitchin component
$\mathcal H_3(S)$ is generated by the extended mapping class group of $S$ and the contragredient involution.}

\medskip

Bridgeman and Canary \cite{BCQF} have shown that the group of diffeomorphisms of quasifuchsian space $QF(S)$
which preserve the renormalized intersection number is generated by the extended mapping class group
and complex conjugation. So one may also consider the corresponding analogue of Problem 2 in quasifuchsian
space.

\medskip

It would be be useful to study the curvature of the pressure metric, guided by the results of
Wolpert \cite{wolpert-neg}, Tromba \cite{tromba-neg}, and Huang \cite{huang}.
Wolf's work \cite{wolf-hessian} (see also \cite{wolf-harmonic})
on the Hessians of length functions on Teichm\"uller space may offer a plan of attack here.

\medskip\noindent
{\bf Problem 3:} {\em Investigate the curvature of the Hitchin component in the pressure metric.}

\medskip

Pollicott and Sharp \cite{pollicott-sharp-graphs} have investigated the curvature of the pressure metric
on deformation spaces of marked metric graphs with entropy 1. In this setting, the curvature can be both
positive and negative. 

Labourie and Wentworth \cite{labourie-wentworth} have derived a formula for the pressure
metric at points in the Fuchsian locus of a Hitchin component in terms of Hitchin's parameterization
of the Hitchin component by holomorphic differentials. They also obtain variational formulas
which are analogues of classical results in the Teichm\"uller setting.

\medskip

Since Labourie \cite{labourie-energy} proved that the mapping class group acts properly
discontinuously on a Hitchin component, it is natural to study the geometry of this action.
One specific question to start with would be:

\medskip\noindent
{\bf Problem 4:} {\em Is there a lower bound for the translation distance for the action of a pseudo-Anosov
mapping class on the Hitchin component?}

\medskip

Since the restriction of the  pressure metric to the Fuchsian locus is a multiple of the Weil-Petersson metric,
the Hitchin component is incomplete and the metric completion contains augmented Teichm\"uller space.
However, very little is known about the completeness of the pressure metric in ``other directions.''
The work of Zhang \cite{zhang1,zhang2} and Loftin \cite{loftin-compactification} (when $d = 3$) should be relevant here. 
It may also be interesting to study the relationship between the metric completion and Parreau's compactification \cite{parreau}
of the Hitchin component.

\medskip\noindent
{\bf Problem 5:} {\em Investigate the metric completion of the Hitchin component or other
higher Teichm\"uller spaces.}

\medskip

Xu \cite{Xu} studied the pressure metric on the Teichmuller space ${\mathcal T}(S)$ where $S$ is a surface with non-empty geodesic boundary. He shows that the pressure metric  in this case is not equal to the classical Weil-Petersson metric on ${\mathcal T}(S)$. He further shows that it is not complete and  that the space of marked metric graphs on a fixed graph with its pressure metric arises naturally in the completion.  

\medskip

The following problem indicates  how little is known about the coarse geometry of the pressure metric.
We recall that a subset $A$ of a
metric space $X$ is said to be {\em coarsely dense} if there exists $D>0$ such that every point
in $X$ lies within $D$ of a point in $A$.

\medskip\noindent
{\bf Problem 6:} {\em (a) Is the Fuchsian locus coarsely dense in a Hitchin component?

(b) Is the Fuchsian locus coarsely dense in quasifuchsian space?

(c) If $M$ is an acylindrical 3-manifold with no toroidal boundary components and $\Gamma=\pi_1(M)$,
does $CC(\Gamma,\ms{PSL}_2(\C))$ have finite diameter?
}

\medskip

Zhang \cite{zhang1,zhang2} and Nie \cite{nie-simplicial} (when $d=3$) produce sequences in Hitchin components
where entropy converges to 0. These sequences are candidates to produce points arbitrarily far
from the Fuchsian locus. 

In case (c), ${\rm Out}(\Gamma)$ is finite (see Johannson \cite{johannson}) and 
$CC(\Gamma,\ms{PSL}_2(\C))$ has compact
closure in the $\ms{PSL}_2(\C)$-character variety (see Thurston \cite{thurston-acylindrical}).

\medskip

One may phrase all the above questions as being about the quotient of a higher Teichm\"uller space by its
natural automorphism group. Similarly, one might ask whether the quotient of the Hitchin component by
the mapping class group has finite volume.

\medskip\noindent
{\bf Problem 7:} {\em Does the quotient of the Hitchin component by the action of the mapping class group
have finite volume in the quotient pressure metric?}

\medskip

Potrie-Sambarino \cite{potrie-sambarino} showed that the entropy function is maximal
uniquely on the Fuchsian locus of a Hitchin component, so it is natural to investigate more
subtle behavior of the entropy function.

\medskip\noindent
{\bf Problem 8:}  {\em  Investigate the critical points on the entropy function.}

\medskip

Bowen \cite{bowen-qf} showed that the entropy function is uniquely minimal on the Fuchsian locus
in quasifuchsian space $QF(S)$. Bridgeman \cite{bridgeman-wp} showed that the entropy function on $QF(S)$ has no local maxima and
moreover the Hessian of the entropy function is positive-definite on at least a half-dimensional subspace at any critical point.

If $M$ is an acylindrical 3-manifold with no toroidal boundary components and $\Gamma=\pi_1(M)$,
then there is a unique representation in $CC(\Gamma,\ms{PSL}_2(\C))$ where the boundary of the
limit set of the image consists of round circles (see Thurston \cite{thurston-bangor}).
It is conjectured that the entropy has a unique minimum
at this representation (see Canary-Minsky-Taylor \cite{CMT}). Storm \cite{storm-volumemax} proved that this 
is the unique representation where
the volume of the convex core achieves its minimum.

\medskip

In the case of  $CC(\Gamma,\ms{PSL}_2(\C))$ we were able 
to obtain a path metric, even when the pressure form is degenerate on a submanifold. One might
hope to be able to do so in more general settings.

\medskip\noindent
{\bf Problem 9:} {\em If $\Gamma$ is a word hyperbolic group, $\ms G$ is a semisimple Lie group
and $\ms P^\pm$ is a pair of opposite parabolic subgroups, can one extend the pressure
metric on $\mathcal Z(\Gamma, \ms G, \ms P^\pm)$ to a path metric on the space of
all (conjugacy classes of ) $(\ms G,\ms P^\pm)$-Anosov representations of $\Gamma$ into $\ms G$?}

\end{document}